\newtheorem{theorem}{Theorem}
\newtheorem{lemma}{Lemma}
\newcommand{\E}{\mathbb{E}}
\newtheorem{fact}{Fact}
\newtheorem{definition}{Definition}
\newcommand{\tens}[1]{\bm{\mathcal{#1}}}
\newcommand{\mat}[1]{\bm{#1}}
\def\tA{{\tens{A}}}  
\def\tB{{\tens{B}}}  
\def\tE{{\tens{E}}}
\def\tH{{\tens{H}}}
\def\tI{{\tens{I}}}
\def\tM{{\tens{M}}}
\def\tP{{\tens{P}}}
\def\tU{{\tens{U}}}
\def\tV{{\tens{V}}}
\def\tW{{\tens{W}}}
\def\tX{{\tens{X}}}  
\def\tY{{\tens{Y}}}
\def\tZ{{\tens{Z}}}
\def\vb{{\bm{b}}}
\def\ve{{\bm{e}}}
\def\vh{{\bm{h}}}
\def\vv{{\bm{v}}}
\def\vx{{\bm{x}}}
\def\R{{\mathbb{R}}} 
\def\E{{\mathbb{E}}} 
\def\bcirc{{\mathrm{bcirc}}}
\def\unfold{{\mathrm{unfold}}}
\def\fold{{\mathrm{fold}}}
\def\bcirc{{\mathrm{bcirc}}}
\title{Block Gauss-Seidel methods for t-product tensor regression}
\author{Alejandra Castillo, Jamie Haddock, Iryna Hartsock, Paulina Hoyos, Lara Kassab, Alona Kryshchenko, Kamila Larripa, Deanna Needell, Shambhavi Suryanarayanan, Karamatou Yacoubou Djima}
\begin{document}

\maketitle

\begin{abstract}
    Randomized iterative algorithms, such as the randomized Kaczmarz method and the randomized Gauss-Seidel method, have gained considerable popularity due to their efficacy in solving matrix-vector and matrix-matrix regression problems. Our present work leverages the insights gained from studying such algorithms to develop regression methods for tensors, which are the natural setting for many application problems, e.g., image deblurring. In particular, we extend two variants of the block-randomized Gauss-Seidel method to solve a t-product tensor regression problem.  We additionally develop methods for the special case where the measurement tensor is given in factorized form. We provide theoretical guarantees of the exponential convergence rate of our algorithms, accompanied by illustrative numerical simulations. 
    \\\textit{Keywords:} tensor linear regression, t-product, randomized Gauss-Seidel methods, factorized tensor operator.
\end{abstract}

\section{Introduction}\label{sec:intro} 

Throughout this paper, we consider the tensor linear regression problem  
\begin{equation}
    \min_{\tX \in \mathbb{R}^{n \times l \times p}} \|\tA \tX - \tB\|_F^2, \label{eq:regression}
\end{equation} 
where $\tA \in \mathbb{R}^{m \times n \times p}$ is the measurement operator or dictionary, $\tB \in \mathbb{R}^{m \times l \times p}$ represents the measurements or data, $\tX \in \mathbb{R}^{n \times l \times p}$ is the signal of interest, and $\tA \tX$ is the t-product~\cite{kilmer2011factorization} between $\tA$ and $\tX$ defined in Section~\ref{sec:notation}.
We also pay special attention to the particular case of the problem where the measurement operator $\tA$ is given in factorized form as $\tA = \tU \tV$, where $\tU \in \R^{m \times m_1 \times p}, \tV \in \R^{m_1 \times n \times p}$, 
\begin{equation}
    \min_{\tX \in \mathbb{R}^{n \times l \times p}} \|\tU \tV \tX - \tB\|_F^2. \label{eq:fact_regression}
\end{equation}

Recently, there has been increased interest in solving problems in the form of~\eqref{eq:regression} and~\eqref{eq:fact_regression}. This is because tensors can be viewed as higher dimensional analogues of matrices, and hence are a natural setting for many problems in the real world, which is inherently multidimensional. For example, MRI and hyperspectral images collect data in three dimensions. Although this data can be analyzed as individual two-dimensional slices, leveraging a tensor structure enables the integration of patterns across the entire dataset. 

The methods we propose in this paper are related to numerous recent developments in the area of randomized iterative methods for tensor linear systems and tensor regression, including the tensor randomized Kaczmarz (TRK) method~\cite{ma2022randomized}, the randomized Kaczmarz methods for t-product tensor linear systems with factorized operators~\cite{castillo2024randomized}, and the tensor randomized regularized Kaczmarz method for convex optimization with linear constraints~\cite{chen2021regularized,du2021randomized}. The above tensor algorithms typically generalize techniques for matrix-vector and matrix-matrix regression problems. For example, TRK is an extension of the popular randomized Kaczmarz (RK) algorithm~\cite{strohmer2009randomized}, while the methods in~\cite{castillo2024randomized} are extensions of iterative methods for solving factorized matrix linear systems \cite{ma2018iterative}. The present paper builds on the work in~\cite{leventhal2010randomized} by extending the randomized Gauss-Seidel (RGS) method to the tensor setting. The RGS method and its variants utilize the columns of the measurement matrix, whereas RK and its variants utilize rows. We construct novel block RGS algorithms for problems~\eqref{eq:regression} and~\eqref{eq:fact_regression}, including pseudoinverse free algorithms. 
We can prove that linear convergence is attained in all these settings and provide numerical experiments illustrating our convergence results.

\subsection{Notation}\label{sec:notation}
We let $[n]$ denote the set $\{1, 2, \cdots, n\}$.  Let $T \subset \mathcal{P}([n])$, where $\mathcal{P}(S)$ denotes the power set of the set $S$.  We define $c_{\min}(T) = \text{argmin}_{i \in [n]} |\{\tau \in T : i \in \tau\}|$ as the minimum number of times the index $i$ appears in the sets $\tau \subset [n]$ in $T$.

We use boldfaced lower-case Latin letters (e.g., $\vx$) to denote vectors, boldfaced upper-case Latin letters (e.g., $\mat A$) to denote matrices, and boldfaced upper-case calligraphic Latin letters (e.g., $\tA$) to denote higher-order tensors. We use non-bold lower-case Latin and Roman letters (e.g., $q$ and $\beta$) to denote scalars.  Throughout, we denote by $\sigma_{\min}{(\mat A)}$ the smallest singular values of the matrix $\mat A$ (that is, the smallest eigenvalue of the matrix $\sqrt{\mat A^\top \mat A}$).  We use $\mat A \otimes \mat B$ to denote the Kronecker product of matrices $\mat A$ and $\mat B$.  

We use ``MATLAB" notation; e.g., $\mat A_{i :}$ is the $i$th row of matrix $\mat A$ and $\tA_{: j :}$ is the $j$th column-slice of tensor $\tA$.  We use $\tA^*$ to denote the \emph{conjugate transpose} of the tensor $\tA \in \mathbb{C}^{m \times n \times p}$, which is obtained by taking the conjugate transpose of each of the frontal slices and then reversing the order of transposed frontal slices 2 through $p$. The \emph{identity tensor} $\tI \in \R^{n\times n \times p}$ is the tensor whose first frontal slice is the $n\times n$ identity matrix, and all other slices are the $n \times n$ zero matrix. Additionally, a tensor $\tB \in \R^{n\times n \times p}$ is the \emph{inverse tensor} of $\tA \in \R^{n\times n \times p}$ if $\tA\tB = \tI = \tB\tA$.

The notation $\|\vv\|$ denotes the Euclidean norm of a vector $\vv$ and $\|\cdot\|_F$ the Frobenius norm of any tensor input. 
Note that \begin{align*}
    \langle \tA, \tB \rangle = \sum_{i,j,k} \tA_{ijk}\tB_{ijk}
\end{align*}
defines an inner product on spaces of real-valued tensors whose dimensions agree.  This inner product induces the Frobenious norm of a tensor $\tA$, i.e., $\langle \tA, \tA \rangle = \|\tA\|_F^2$. 

We now provide background on the tensor-tensor t-product~\cite{kilmer2011factorization}.
\begin{definition}
\label{def:t-product}
The \emph{tensor-tensor t-product} between $\tA \in \R^{m \times n \times p}$ and $\tB \in \R^{n \times l \times p}$ is defined as
\begin{equation*}
\tA \tB = \fold (\bcirc(\tA) \unfold (\tB)) \in \R^{m \times l \times p},
\end{equation*}
where $\bcirc (\tA)$ denotes the block-circulant matrix
\[\bcirc(\tA) = \begin{pmatrix}
\tA_{1} & \tA_{p} & \tA_{p-1} & \dots & \tA_{2} \\
\tA_{2} & \tA_{1} & \tA_{p} & \dots & \tA_{3} \\
\vdots & \vdots & \vdots & \dots & \vdots \\
\tA_{p} & \tA_{p-1} & \tA_{p-2} & \dots & \tA_{1} \\
\end{pmatrix} \in \R^{mp \times np},\]
$\unfold(\tB)$ denotes the operation defined as
\[\unfold(\tB) = \begin{pmatrix}
 \tB_{1}\\
 \tB_{2}\\
 \vdots \\
 \tB_{p}
\end{pmatrix} \in \R^{np \times l},\]
and $\fold(\unfold (\tB))) = \tB$.
\end{definition}

Finally, we note that the definition of the t-product implies that a tensor linear system may be reformulated as an equivalent (matrix) linear system.
\begin{fact}
   The tensor linear system $$\tA \tX = \tB$$ is equivalent to the matrix-matrix linear system $$\bcirc(\tA) \unfold(\tX) = \unfold(\tB);$$ that is, solutions to the tensor linear system, $\tX$, after unfolding, $\unfold(\tX)$, are solutions to the matrix linear system, and vice versa. \label{fact:equivalent_systems}
\end{fact}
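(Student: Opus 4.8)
The plan is to observe that this is essentially immediate from Definition~\ref{def:t-product}, once one notes that $\unfold$ is a bijection from $\R^{m \times l \times p}$ onto $\R^{mp \times l}$ whose inverse is precisely $\fold$. Indeed, $\unfold$ merely stacks the $p$ frontal slices of a tensor into a single tall matrix, which is a rearrangement of entries that is manifestly injective and surjective; and $\fold$ is defined to undo it, so that $\fold \circ \unfold = \mathrm{id}$ on tensors of shape $m \times l \times p$ and $\unfold \circ \fold = \mathrm{id}$ on matrices of shape $mp \times l$. This equivalence of the two maps is in fact already built into the statement of Definition~\ref{def:t-product}, which explicitly asserts $\fold(\unfold(\tB)) = \tB$.

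For the forward direction I would argue as follows. Suppose $\tX$ solves $\tA \tX = \tB$. By Definition~\ref{def:t-product}, $\tA \tX = \fold(\bcirc(\tA)\,\unfold(\tX))$, so $\fold(\bcirc(\tA)\,\unfold(\tX)) = \tB$. Applying $\unfold$ to both sides and using $\unfold \circ \fold = \mathrm{id}$ yields $\bcirc(\tA)\,\unfold(\tX) = \unfold(\tB)$; that is, $\unfold(\tX)$ solves the matrix system. For the converse, suppose a matrix $\mat Y \in \R^{np \times l}$ solves $\bcirc(\tA)\,\mat Y = \unfold(\tB)$, and set $\tX = \fold(\mat Y) \in \R^{n \times l \times p}$, so that $\unfold(\tX) = \mat Y$. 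Applying $\fold$ to both sides of $\bcirc(\tA)\,\unfold(\tX) = \unfold(\tB)$, then invoking Definition~\ref{def:t-product} on the left and $\fold \circ \unfold = \mathrm{id}$ on the right, gives $\tA \tX = \tB$. Hence the solution sets of the two systems are in bijective correspondence via $\unfold$ (equivalently via $\fold$).

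There is no real obstacle here; the only point requiring a word of justification is that $\unfold$ and $\fold$ are mutually inverse bijections, and this is part of the hypotheses encoded in Definition~\ref{def:t-product}. If one wished to say slightly more, one could note that $\unfold$ is a linear map, so the correspondence between solution sets is an affine isomorphism (linear in the homogeneous case $\tB = \vzero$), but this refinement is not needed for the statement as phrased.
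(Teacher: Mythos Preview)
Your argument is correct and is exactly the reasoning the paper has in mind: the paper does not give a separate proof of this fact, treating it as an immediate consequence of Definition~\ref{def:t-product} together with the (stated) relation $\fold(\unfold(\tB)) = \tB$. You have simply made explicit the two one-line implications that the paper leaves to the reader.
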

As with matrix-vector linear system iterative methods, the range of a given tensor under the tensor t-product is an important concept for defining and understanding tensor regression.
  The \emph{$k$-range space} of tensor $\tA$ is defined as
        \[ \mathcal{R}_k(\tA) = \{\tA \tX \colon \tX \in \R^{m\times k\times p} \}. \]

To describe our algorithms, we will use the notation $\tE_j\in \mathcal{R}^{n \times 1 \times p}$ for a vertical slice tensor with the first frontal slice being a standard basis vector $\ve_j$ with a one in the $jth$ coordinate and zeros elsewhere, and the remaining frontal slices all entirely zero. For $\tau \subset [n]$, we additionally define $\tE_{\tau} \in \mathcal{R}^{n \times |\tau| \times p}$ to be a vertical slice tensor with the first frontal slice being the columns of the $n \times n$ identity matrix indexed by $\tau$, and the rest of the frontal slices all zero. We will often use the fact that, for $\tA \in \R^{m\times n \times p}$,  $\tA\tE_{\tau}  = \tA_{:\tau:}$.

\subsection{Randomized Gauss-Seidel (RGS) method and block extensions}\label{subsec:RGS}

Column-action methods are gaining popularity in the literature because of their computational advantages. In settings where row slices of the measurement tensor are very large and exceed the capacity of active memory, or when the tensor data is inherently organized as column-slice components (e.g., distributed across computational servers or primarily indexed by columns), accessing the tensor through its column slices may be the only practical and dependable option. 

The \emph{Jacobi} and \emph{Gauss-Seidel} methods are classic examples of column-action iterative algorithms used to approximate a solution $\bm{x} \in \R^{n}$ in the system of linear equations $\mat A\bm{x}=\bm{b}$, where $\mat A \in \R^{n \times n}$ and $\bm{b} \in \R^{n}$ are given. Both algorithms start by decomposing the matrix $\mat A$ into the sum of a strictly lower triangular matrix $\mat L$, a diagonal matrix $\mat D$, and a strictly upper triangular matrix $\mat  U$ such that $\mat  A = \mat D + \mat  L + \mat U$. The matrix equation above can then be reformulated as $\mat D \vx + \mat L\vx+ \mat U \vx=\bm{b}$. Based on this decomposition, each method uses a fixed-point equation to construct an iterative algorithm, often expressed entry-wise. In each case, the convergence properties of the method are proved using the spectral radii of the matrices involved in each algorithm update.

The main difference between the two methods is that for each new iterate $\bm{x}^{(k)}$, the Jacobi method updates the approximation of the solution using only the entries of the previous iteration vector $\bm{x}^{(k-1)}$, while the Gauss-Seidel algorithm uses the most recently updated entry values in $\bm{x}^{(k)}$ to obtain the subsequent entries. Since the Gauss-Seidel method generally requires fewer iterations to reach the desired accuracy~\cite{GVL96:Matrix-Computations}, it is therefore favored over the Jacobi method when one desires faster convergence to a solution. However, if parallelization is a priority, the Jacobi method may be more suitable due to its simpler implementation in parallel computing environments. Furthermore, while both methods achieve optimal performance when $\mat A$ is diagonally dominant, the Gauss-Seidel method can still converge in certain scenarios where the Jacobi method fails to do so~\cite{GVL96:Matrix-Computations}.

The \emph{randomized Gauss-Seidel} (RGS) method~\cite{leventhal2010randomized} is a variant of randomized coordinate descent applied to a least-squares objective. The update is defined by minimizing a subset of the residual error with respect to a single coordinate. RGS can also be viewed as a variant of the classical Jacobi method or the classical Gauss-Seidel method applied to the normal equations $\mat{A}^\top \mat{A} \vx = \mat{A}^\top \vb$ in which only a single coordinate is updated. 

The formula for the $k$th iterate of RGS is given as 
\begin{equation}
    \vx^{(k)} = \vx^{(k-1)} - \frac{ \mat  A_{: i_k}^T(\mat  A \vx^{(k-1)} - \vb)}{\| \mat  A_{: i_{k}}\|^2} \ve_{i_k}, \label{eq:GSupdate},
\end{equation} 
where $\mat  A_{: i_k}$ is the $i_k$th column of $\mat A$. The probability of sampling the $j$th column of $\mat A$ is $\|\mat A_{: j}\|^2/\|\mat A\|_F^2$, i.e., proportional to the square of the Euclidean norm of the column. Under certain conditions, the RGS algorithm is proven to exhibit expected linear convergence:
\begin{equation}
    \mathbb{E}\|\vx^{(k)} - \vx^*\|^2 \le \left(1 - \frac{\sigma_{\min}^2(\mat{A})}{\|\mat{A}\|_F^2}\right)^k \|\vx^{(0)} - \vx^*\|^2. \label{eq:RGSrate}
\end{equation}  
Given its straightforward implementation, relevance to certain data structures, and desirable convergence rate, RGS has found applications in subroutines for multigrid methods~\cite{rude1993mathematical,trottenberg2000multigrid}, high-performance computing~\cite{wolfson2017distributed}, and PDEs~\cite{glusa2020scalable,magoules2017asynchronous}, and has also garnered much interest in further theoretical extensions and generalizations. In~\cite{Ma2015convergence}, a unified theoretical framework is developed for the Kaczmarz and Gauss-Seidel algorithms, highlighting their relationships in both under-determined and over-determined settings. Additionally, an extended variant of the randomized Gauss-Seidel (RGS) method is introduced, and, unlike the standard RGS, this variant achieves linear convergence to the least-norm solution in the under-determined case. The paper~\cite{richtarik2016parallel} demonstrates that randomized (block) coordinate descent methods can achieve acceleration through parallelization when applied to the optimization problem of minimizing the sum of a partially separable smooth convex function and a simple separable convex function. Further extensions and surveys can be found in~\cite{frommer2023convergence,HefnyRows15}. In~\cite{du2021randomized}, two pseudoinverse-free versions of randomized block Gauss-Seidel are proposed to solve both consistent and inconsistent linear systems. These algorithms utilize two user-defined discrete or continuous random matrices, one for row sampling and the other for column sampling. In addition to a proof of linear convergence in the mean-square sense, numerical experiments are provided to illustrate the algorithms' performance.

\subsection{Tensor randomized Kaczmarz method}\label{subsec:TRK}

The \emph{tensor randomized Kaczmarz} (TRK) method is introduced in~\cite{ma2022randomized} as a generalization of the randomized Kaczmarz method tailored for tensor linear systems defined using the t-product. Starting from an initial approximation $\tX^{(0)}$ to the solution $\tX^*$ of the tensor linear system $\tA \tX = \tB$, TRK updates approximations of $\tX^*$ via successive iterations with two steps: 1) sampling a row slice of the tensor system defined by $\tA_{i_k::}$ and $\tB_{i_k::}$, and 2) projecting the previous iterate onto the space of solutions to this sampled subsystem. This method is proven to converge at least linearly in expectation to the unique solution of the system, $\tX^*$.

Moreover, the authors noted that while the TRK method can be interpreted as a block Kaczmarz method applied to a matrix-matrix system with a specific block selection, a straightforward application of the block Kaczmarz method yields weaker theoretical guarantees compared to those provided by their analysis of TRK. 

An extension to TRK, the tensor randomized average Kaczmarz (TRAK), is provided in~\cite{Bao2022RandAvg}. The TRAK algorithm finds the least Frobenius-norm solution for consistent tensor linear systems of the form $\tA \tX = \tB$, where $\tA \in \R^{m \times n \times p}$, $\tX \in \R^{n \times l \times p}$, and $\tB \in \R^{m \times l \times p}$. This method has the advantage of being pseudoinverse- and inverse-free and offers a speed-up over the TRK method for solving tensor linear systems $\tA \tX = \tB$. The algorithm converges at least linearly in expectation to the unique least Frobenius-norm solution $\tX^\ast = \tA^\dagger \tB$. 

Further developments for tensor Kaczmarz include the tensor regular sketch-and-project algorithm (TESP)~\cite{tang2022sketch}, which, like the two previously discussed methods, tackles a consistent tensor system. In~\cite{huang2023tensor}, the tensor randomized extended Kaczmarz (TREK) method, an extended variant of TRK, is constructed to solve an inconsistent system and converges at least linearly in expectation to the solution of the least-squares tensor solution. 

\subsection{Factorized Kaczmarz methods}\label{subsec:FKmethods}

In~\cite{ma2018iterative}, factorized Kaczmarz methods are developed to solve a linear system of equations $\mat A \vx = \vb$, where the measurement matrix $\mat A$ is expressed as the product of two matrices $\mat U$ and $\mat V$. Their algorithms, RK-RK and REK-RK, compute the optimal solution for a system defined by a factorized measurement matrix in the consistent and inconsistent regime, respectively. The authors of the present paper recently proposed extensions of the RK-RK and REK-RK algorithms to the tensor setting in~\cite{castillo2024randomized}. Specifically, these extensions produce an approximate solution for a (consistent or inconsistent) tensor system in the form
\begin{equation}
    \tU \tV \tX= \tB,
\end{equation}
where $\tU \in \R^{m \times m_1 \times p}, \tV \in \R^{m_1 \times n \times p}$, $\tX \in \R^{n \times l \times p}$ and $\tY \in \R^{m \times l \times p},$ by iteratively computing approximate solutions to the systems 
\begin{align}
     \tU \tZ &= \tB, \text{ and }\label{eq:outer_system}\\
     \tV \tX &= \tZ.\label{eq:inner_system}
\end{align}
The essential idea is to solve the systems via interlaced Kaczmarz steps such that, in each iteration, the algorithm finds an update for $\tZ$ that is used to compute the next update for $\tX$ in the same iteration. In both the matrix~\cite{ma2018iterative} and tensor~\cite{castillo2024randomized} cases, theoretical guarantees of linear convergence are provided along with a suite of numerical experiments that confirm the convergence results.

\subsection{Contributions and Organization}

In~\cite{ma2022randomized}, it was shown that TRK applied to a tensor system exhibited an advantage over a naive implementation of the block Kaczmarz method applied to the equivalent matricized system, which illustrates the advantage of leveraging the intrinsic structure of tensor data through row-slice or column-slice-action-based \emph{tensor method} updates, rather than relying solely on their matrix counterparts. The methods we propose are part of the broader effort to develop tensor-based approaches for addressing tensor linear systems and tensor regression problems. 

In this paper, we propose a Tensor Randomized Block Gauss-Seidel (TRBGS) algorithm for solving tensor linear regression under the t-product formulation~\eqref{eq:regression}. This method extends the Gauss-Seidel method~\cite{leventhal2010randomized} to tensor settings, drawing inspiration from the TRK algorithm~\cite{ma2022randomized}. Additionally, we introduce a pseudoinverse-free variant, Tensor Randomized Block Averaging Gauss-Seidel (TRBAGS), which eliminates the need to compute the pseudoinverse required by TRBGS. These methods are presented in Section~\ref{subsec:TRBGS}. 

We develop two additional algorithms: Factorized Tensor Randomized Block Gauss-Seidel (FacTRBGS) and Factorized Tensor Randomized Block Averaging Gauss-Seidel (FacTRBAGS). These algorithms extend TRBGS and TRBAGS to scenarios where the measurement tensor $\tA$ is expressed as the t-product of two tensors, $\tU$ and $\tV$, for the problem~\eqref{eq:fact_regression}. These methods are presented in Section~\ref{subsec:facTRBGS}. 

Alongside all proposed methods, we provide theoretical guarantees showing that they converge at least linearly in expectation to the least-norm solution of~\eqref{eq:regression} and~\eqref{eq:fact_regression} under mild assumptions on the given data.  
The proofs of these theoretical results are given in Section~\ref{subsec:proofs}, including a sequence of broadly relevant lemmas in Section~\ref{subsec:usefullemmas}. 
Finally, we validate our theoretical results with numerical experiments on synthetic data in Section~\ref{subsec:synthetic_experiments} and an application to video deblurring in Section~\ref{subsec:deblurring_experiments}.

\section{Methods and Main Results}
\label{sec:methods-results}

In this section, we present our randomized block Gauss-Seidel methods for tensor regression~\eqref{eq:regression} and their theoretical convergence results in Section~\ref{subsec:TRBGS}, and our randomized block Gauss-Seidel methods for factorized tensor regression~\eqref{eq:fact_regression} and their theoretical convergence results in Section~\ref{subsec:facTRBGS}.  We provide and prove useful lemmas in Section~\ref{subsec:usefullemmas} and the proofs of all main convergence results in Section~\ref{subsubsec:proofsofTRBGSandTRBAGS} and \ref{subsubsec:proofsoffacTRBGSandfacTRBAGS}. We note that one of the four methods explored in this current work was proposed in~\cite{tensorregression24}, but no theoretical analysis was provided and three of our four methods are newly proposed.

\subsection{Randomized block Gauss-Seidel methods for tensor regression}\label{subsec:TRBGS}

We now describe two methods, Tensor Randomized Block Gauss-Seidel
(TRBGS) method, Algorithm~\ref{alg:trbgs}, and Tensor Randomized
Block Averaging Gauss-Seidel (TRBAGS), Algorithm~\ref{alg:trbags}, for solving the regression problem~\eqref{eq:regression}. We will prove that each algorithm converges at least linearly in expectation to the least norm solution, as stated in Theorem~\ref{thm:trbgs} and Theorem~\ref{thm:trbags}, respectively. For both theorems, we provide convergence rates in terms of residual errors and, when possible, absolute errors, which, as expected, are functions of the conditioning of the tensors. 

We recall here the tensor linear regression problem~\eqref{eq:regression},
\[
    \min_{\tX \in \mathbb{R}^{n \times l \times p}} \|\tA \tX - \tB\|_F^2,
\]
with $\tA \in \R^{m \times n \times p}$, $\tX \in \R^{n \times l \times p}$, and $\tB \in \R^{m \times l \times p}$. For TRBGS, the update is a natural extension of the block-Gauss Seidel update in the matrix case, i.e.,
\begin{equation}\label{eq:TRBGSupdate}
    \tens{X}^{(k)} = \tens{X}^{(k-1)} -  \tens{E}_{\tau_k}(\tens{A}_{:\tau_k:}^{*}\tens{A}_{:\tau_k:})^{-1} \tens{A}_{:\tau_k:}^{*}\tens{R}^{(k-1)},
\end{equation}
where $\tE_{\tau_k} \in \mathcal{R}^{n \times |\tau_k| \times p}$ is a vertical slice tensor with the first frontal slice being the columns of the $n \times n$ identity matrix indexed by $\tau_k$, and the rest of the frontal slices all zero; the pseudocode is given in Algorithm~\ref{alg:trbgs}. 

\begin{algorithm}
\caption{Tensor randomized block Gauss-Seidel (TRBGS)}\label{alg:trbgs}
\begin{algorithmic}
\Procedure{TRBGS}{$\tA$, $\tB$, $K$} 
\State {$\tens{X}^{(0)} = 0$, $\tens{R}^{(0)} = \tens{A}\tens{X}^{(0)}-\tens{B}$}
\For {$k=1, 2,...,K$} 
\State{Sample $\tau_k \sim \mathcal{D}(T)$}
\State{$\tens{X}^{(k)} = \tens{X}^{(k-1)} -  \tens{E}_{\tau_k}(\tens{A}_{:\tau_k:}^{*}\tens{A}_{:\tau_k:})^{-1} \tens{A}_{:\tau_k:}^{*}\tens{R}^{(k-1)}$}
\State{$\tens{R}^{(k)} = \tens{A}\tens {X}^{(k)} -\tens{B}$}
\EndFor{}
\Return{$\tens{X}^{(K)}$}
\EndProcedure{}
\end{algorithmic}
\end{algorithm}

We now state the main convergence result for Algorithm~\ref{alg:trbgs} and formally
prove the theorem in Section~\ref{subsubsec:proofsofTRBGSandTRBAGS}.

\begin{theorem}\label{thm:trbgs}
    Suppose the blocks $\tau_k$ in Algorithm~\ref{alg:trbgs} are sampled independently from a set $T$ according to some distribution $\mathcal{D}(T)$. 
    Let $\tX^{(k)}$ denote the iterate in the $k$th iteration of Algorithm~\ref{alg:trbgs} applied to the (possibly inconsistent) system defined by $\tA$ and $\tB$ and denote the orthogonal projection operator for block $\tau$ as $$\tP_{\tA_{: \tau :}} := \tA_{: \tau :}(\tA_{: \tau :}^\ast\tA_{: \tau :})^{-1}\tA_{: \tau :}^\ast.$$ Let $\tX^\ddagger := \mathrm{argmin}_{\tX} \|\tA \tX - \tB\|_F^2$.  Then the expected residual error in the $k$th iteration satisfies 
    \begin{equation} 
    \E\left[\|\tA \tX^{(k)} - \tA \tX^\ddagger\|_F^2 | \tX^{(0)}\right] \le \alpha_{\tA}^k\| \tA \tX^{(0)} - \tA \tX^\ddagger\|_F^2,  \label{eq:trbgs_residual_theorem}
    \end{equation}
    where $\alpha_{\tA} = \left(1 - \sigma_{\min}( \bcirc(\E[\tP_{\tA_{: \tau :}}]))\right)$, and the expectation is taken with respect to the probability distribution $\mathcal{D}(T)$. 
    
    Suppose additionally that the system $\tA\tX = \tB_{\mathcal{R}_l(\tA)} := \tA \tX^\ddagger$ has unique solution $\tX = \tX^\ddagger$. Then the expected absolute error in the $k$th iteration satisfies
    \begin{equation} 
    \E\left[\|\tX^{(k)} - \tX^\ddagger\|_F^2 | \tX^{(0)}\right] \le \kappa^2(\tA)\alpha_{\tA}^k\|\tX^{(0)} - \tX^\ddagger\|_F^2, \label{eq:trbgs_error_theorem}
    \end{equation}
    where $\kappa^2(\tA) := \sigma_{\max}^2(\bcirc(\tA^\dagger)) \sigma_{\max}^2(\bcirc(\tA))$, and the expectation is taken with respect to the probability distribution $\mathcal{D}(T)$.
    \label{thm:trbgs}
\end{theorem}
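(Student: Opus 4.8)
The plan is to transport the problem to the equivalent block-circulant matrix formulation of Fact~\ref{fact:equivalent_systems}, recognize Algorithm~\ref{alg:trbgs} there as matrix randomized block Gauss--Seidel (randomized block coordinate descent) for a least-squares problem, run the classical RGS-type convergence argument while tracking the inconsistent part of the right-hand side, and then translate back to tensor notation.

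First I would fix the dictionary between the tensor and matrix pictures. Write $\mat{M} := \bcirc(\tA)$, $\vb := \unfold(\tB)$, $\vx^{(k)} := \unfold(\tX^{(k)})$, and $\vr^{(k)} := \unfold(\tR^{(k)})$. Using that $\bcirc(\cdot)$ is additive, multiplicative, unital, and satisfies $\bcirc(\tA^\ast) = \bcirc(\tA)^\ast$ (Definition~\ref{def:t-product} and~\cite{kilmer2011factorization}), together with $\tA_{:\tau:} = \tA\tE_\tau$ and $\bcirc(\tE_\tau) = \mat{I}_p \otimes \mat{I}_n(:,\tau) =: \mat{S}_\tau$ (the matrix picking out, within each of the $p$ blocks, the columns indexed by $\tau$), one checks $\bcirc(\tA_{:\tau:}) = \mat{M}\mat{S}_\tau$ and that the tensor update~\eqref{eq:TRBGSupdate} unfolds exactly to the matrix recursion $\vx^{(k)} = \vx^{(k-1)} - \mat{S}_{\tau_k}(\mat{S}_{\tau_k}^\ast\mat{M}^\ast\mat{M}\mat{S}_{\tau_k})^{-1}\mat{S}_{\tau_k}^\ast\mat{M}^\ast\vr^{(k-1)}$ with $\vr^{(k)} = \mat{M}\vx^{(k)} - \vb$ and $\vx^{(0)} = 0$, the indicated inverse existing by the standing well-posedness assumption on the sampled subsystems. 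I would also record that $\mat{P}_{\tau_k} := \bcirc(\tP_{\tA_{:\tau_k:}}) = \mat{M}\mat{S}_{\tau_k}(\mat{S}_{\tau_k}^\ast\mat{M}^\ast\mat{M}\mat{S}_{\tau_k})^{-1}\mat{S}_{\tau_k}^\ast\mat{M}^\ast$ is the orthogonal projector onto $\mathrm{range}(\mat{M}\mat{S}_{\tau_k})$, that by linearity $\E[\mat{P}_{\tau_k}] = \bcirc(\E[\tP_{\tA_{:\tau:}}])$ is symmetric positive semidefinite, and that $\|\tC\|_F = \|\unfold(\tC)\|_F$ for every tensor $\tC$.

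For~\eqref{eq:trbgs_residual_theorem} I would set $\vx^\ddagger := \unfold(\tX^\ddagger)$ and $\mat{U}^{(k)} := \mat{M}\vx^{(k)} - \mat{M}\vx^\ddagger = \unfold(\tA\tX^{(k)} - \tA\tX^\ddagger)$. Since $\tX^\ddagger$ minimizes $\|\tA\tX - \tB\|_F = \|\mat{M}\vx - \vb\|_F$, the normal equations give $\mat{M}^\ast(\mat{M}\vx^\ddagger - \vb) = 0$, so $\mat{M}\vx^\ddagger - \vb$ is orthogonal to $\mathrm{range}(\mat{M}) \supseteq \mathrm{range}(\mat{M}\mat{S}_{\tau_k})$ and hence $\mat{P}_{\tau_k}(\mat{M}\vx^\ddagger - \vb) = 0$. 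Applying $\mat{M}$ to the matrix recursion, this cross term cancels and leaves $\mat{U}^{(k)} = (\mat{I} - \mat{P}_{\tau_k})\mat{U}^{(k-1)}$ (so, inductively, $\mat{U}^{(k)} \in \mathrm{range}(\mat{M})$). Because $\mat{I} - \mat{P}_{\tau_k}$ is a symmetric idempotent, a short computation gives $\E[\|\mat{U}^{(k)}\|_F^2 \mid \tX^{(k-1)}] = \mathrm{tr}\big(\mat{U}^{(k-1)\ast}(\mat{I} - \E[\mat{P}_{\tau_k}])\mat{U}^{(k-1)}\big) \le \big(1 - \sigma_{\min}(\E[\mat{P}_{\tau_k}])\big)\|\mat{U}^{(k-1)}\|_F^2 = \alpha_\tA\|\mat{U}^{(k-1)}\|_F^2$, using the Rayleigh-quotient bound for the symmetric positive semidefinite matrix $\E[\mat{P}_{\tau_k}] = \bcirc(\E[\tP_{\tA_{:\tau:}}])$; iterating with the tower property and using $\|\mat{U}^{(k)}\|_F = \|\tA\tX^{(k)} - \tA\tX^\ddagger\|_F$ gives~\eqref{eq:trbgs_residual_theorem}. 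For~\eqref{eq:trbgs_error_theorem} I would invoke the extra hypothesis, which says precisely that $\mat{M}$ has full column rank; then $\mat{M}^\dagger\mat{M} = \mat{I}$, so $\unfold(\tX^{(k)} - \tX^\ddagger) = \mat{M}^\dagger\mat{U}^{(k)}$ and $\|\tX^{(k)} - \tX^\ddagger\|_F \le \sigma_{\max}(\bcirc(\tA^\dagger))\|\mat{U}^{(k)}\|_F$ (using $\mat{M}^\dagger = \bcirc(\tA^\dagger)$), while $\|\mat{U}^{(0)}\|_F \le \sigma_{\max}(\bcirc(\tA))\|\tX^{(0)} - \tX^\ddagger\|_F$; combining these with the residual bound yields~\eqref{eq:trbgs_error_theorem}.

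I expect the only genuine work to be the bookkeeping in the first step: verifying that the TRBGS update is literally matrix block Gauss--Seidel on the unfolded system, and that $\bcirc(\tP_{\tA_{:\tau:}})$ is a bona fide orthogonal projector on $\R^{mp}$ (which is where well-posedness of the sampled blocks enters). Once that is in place, the residual and absolute-error estimates are the standard randomized Gauss--Seidel argument; the one subtlety there is the cancellation of $\mat{P}_{\tau_k}(\mat{M}\vx^\ddagger - \vb)$, which is exactly what makes the natural target the range-space projection $\tB_{\mathcal{R}_l(\tA)} = \tA\tX^\ddagger$ rather than $\tB$, and which lets the argument cover inconsistent systems with no change.
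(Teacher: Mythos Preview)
Your proposal is correct and follows essentially the same strategy as the paper: both establish the residual recursion $\tA\tX^{(k)}-\tA\tX^\ddagger=(\tI-\tP_{\tA_{:\tau_k:}})(\tA\tX^{(k-1)}-\tA\tX^\ddagger)$ via the normal-equations cancellation, bound the one-step expected contraction by the minimum eigenvalue of the expected projector, iterate with the tower property, and then pass to the absolute error using $\tA^\dagger\tA=\tI$. The only difference is cosmetic: you unfold to the block-circulant matrix system first and argue there, whereas the paper stays in tensor notation and packages the same computations into Lemmas~\ref{lem:boundsforFrobeniusnorm}, \ref{lem:residual_to_error}, and~\ref{lem:usual_least_squares_theory}.
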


In practice, the main drawback of formula~\eqref{eq:TRBGSupdate} is that each iteration is expensive since we need to compute the pseudoinverse of a tensor. To decrease the computational cost of the solution of TRBGS, we develop TRBAGS as a tensor extension of the (matrix) randomized average block Kaczmarz algorithm (e.g.,~\cite{du2021randomized},~\cite{necoaraIon40}), in which the present update is projected onto individual rows that form the block matrix in consideration, and the resulting projections are averaged to form the next iterate. Specifically, the TRBAGS update is given as
\begin{equation}\label{eq:TRBAGSupdate}
    \tens{X}^{(k)} = \tens{X}^{(k-1)} -  \omega \tens{E}_{\tau_k} \tens{A}_{:\tau_k:}^{*}(\tens{A}\tens {X}^{(k-1)} -\tens{B}),
\end{equation}
where the step size is $\omega > 0$. We will use the typical value $\omega = 1$, but note that Theorem~\ref{thm:trbags} provides an optimal interval for this parameter. Observe also that randomized average block Kaczmarz algorithms usually feature a weighted average of the projections. For simplicity, we do not consider such weighted averages in our work. The pseudocode for TRBAGS is given in Algorithm~\ref{alg:trbags}.

\begin{algorithm}
\caption{Tensor randomized block averaging Gauss-Seidel (TRBAGS)}\label{alg:trbags}
\begin{algorithmic}
\Procedure{TRBAGS}{$\tA$, $\tB$, $K$} 
\State {$\tens{X}^{(0)} = 0$, $\tens{R}^{(0)} = \tens{A}\tens{X}^{(0)}-\tens{B}$}
\For {$k=1, 2,...,K$} 
\State{Sample $\tau_k \sim \text{unif}(T)$}
\State{$\tens{X}^{(k)} = \tens{X}^{(k-1)} -  \omega \tens{E}_{\tau_k} \tens{A}_{:\tau_k:}^{*}\tens{R}^{(k-1)}$}
\State{$\tens{R}^{(k)} = \tens{A}\tens {X}^{(k)} -\tens{B}$}
\EndFor{}
\Return{$\tens{X}^{(K)}$}
\EndProcedure{}
\end{algorithmic}
\end{algorithm}

We now state the main convergence result for Algorithm~\ref{alg:trbags} and formally
prove the theorem in Section~\ref{subsubsec:proofsofTRBGSandTRBAGS}.

\begin{theorem}\label{thm:trbags}
Suppose the blocks $\tau_k$ in Algorithm~\ref{alg:trbags} are sampled independently from a set $T$ according to the distribution $\mathrm{unif}(T)$.  
    Let $\tX^{(k)}$ denote the iterate in the $k$th iteration of Algorithm~\ref{alg:trbags} applied to the (possibly inconsistent) system defined by $\tA$ and $\tB$. Let $\tX^\ddagger := \mathrm{argmin}_{\tX} \|\tA \tX - \tB\|_F^2$, and define $\sigma^2 = \max_{\tau \in T} \sigma_{\max}^2(\bcirc(\tA_{: \tau :}))$ and $c_{\min}(T) = \mathrm{argmin}_{i \in [n]} |\{\tau \in T : i \in \tau\}|$ to be the minimum number of times any column appears in the set of column blocks. Suppose $2  \omega - \omega^2  \sigma^2 > 0$.  Then the expected residual error in the $k$th iteration satisfies 
    \begin{equation} 
    \E\left[\|\tA \tX^{(k)} -\tA\tX^\ddagger\|_F^2 | \tX^{(0)}\right] \le \left(1 - (2  \omega - \omega^2  \sigma^2)\frac{c_{\min}(T)}{|T|}\sigma_{\min}^2(\bcirc(\tA))\right)^k \|\tA \tX^{(0)} - \tA\tX^\ddagger\|_F^2,
      \label{eq:trbags_residual_theorem}
    \end{equation}
    where the expectation is taken with respect to the probability distribution $\mathrm{unif}(T)$. 
    
    Suppose additionally that the system $\tA\tX = \tB_{\mathcal{R}_l(\tA)} := \tA \tX^\ddagger$ has unique solution $\tX = \tX^\ddagger$. Then the expected absolute error in the $k$th iteration satisfies
    \begin{equation} 
    \E\left[\|\tX^{(k)} -\tX^\ddagger\|_F^2 | \tX^{(0)}\right] \le \kappa^2(\tA) \left(1 - (2  \omega - \omega^2  \sigma^2)\frac{c_{\min}(T)}{|T|}\sigma_{\min}^2(\bcirc(\tA))\right)^k \|\tX^{(0)} - \tX^\ddagger\|_F^2,
     \label{eq:trbags_error_theorem}
    \end{equation}
    where $\kappa^2(\tA) := \sigma_{\max}^2(\bcirc(\tA^\dagger)) \sigma_{\max}^2(\bcirc(\tA))$ and the expectation is taken with respect to the probability distribution $\mathrm{unif}(T)$.
\end{theorem}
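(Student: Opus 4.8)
The plan is to reduce the tensor statement to a matrix statement via Fact~\ref{fact:equivalent_systems}, then analyze the resulting averaged block Gauss-Seidel iteration in the matricized (block-circulant) domain. Writing $\vx^{(k)} = \unfold(\tX^{(k)})$, $\vb = \unfold(\tB)$, $\mat A = \bcirc(\tA)$, and observing that $\bcirc$ is multiplicative on t-products, the TRBAGS update~\eqref{eq:TRBAGSupdate} becomes $\vx^{(k)} = \vx^{(k-1)} - \omega\, \bcirc(\tE_{\tau_k})\,\bcirc(\tA_{:\tau_k:})^\ast(\mat A\vx^{(k-1)} - \vb)$. The key structural facts I would establish first (likely as part of Section~\ref{subsec:usefullemmas}, which I am allowed to invoke) are: $\bcirc(\tA_{:\tau:})$ picks out the corresponding column blocks of $\mat A$ up to a block permutation, $\bcirc(\tE_\tau)\bcirc(\tE_\tau)^\ast$ is the diagonal projection onto those columns, and $\mat A\bcirc(\tE_\tau) = \bcirc(\tA_{:\tau:})$. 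So the iteration in the $\vx$-variable is exactly a randomized averaged block coordinate-descent step on $\min_{\vx}\|\mat A\vx - \vb\|^2$ with coordinate block determined by $\tau_k$ (inflated $p$-fold).

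Next I would pass to the residual $\vr^{(k)} := \mat A\vx^{(k)} - \mat A\vx^\ddagger$ where $\vx^\ddagger = \unfold(\tX^\ddagger)$; since $\tX^\ddagger$ minimizes the least-squares objective, $\mat A^\ast(\mat A\vx^\ddagger - \vb) = 0$, so the cross terms involving $\vb - \mat A\vx^\ddagger$ vanish and $\vr^{(k)} = (\mat I - \omega\, \mat A\,\mat S_{\tau_k}\mat A^\ast)\vr^{(k-1)}$, where $\mat S_\tau := \bcirc(\tE_\tau)\bcirc(\tE_\tau)^\ast$ is the diagonal 0/1 selection matrix for block $\tau$. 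Expanding $\|\vr^{(k)}\|^2$ and taking conditional expectation, the bound $\sigma^2 = \max_\tau \sigma_{\max}^2(\bcirc(\tA_{:\tau:}))$ controls the quadratic term: $\mathbb{E}\|\vr^{(k)}\|^2 \le \|\vr^{(k-1)}\|^2 - (2\omega - \omega^2\sigma^2)\,\langle \vr^{(k-1)},\, \mathbb{E}[\mat A\mat S_{\tau_k}\mat A^\ast]\,\vr^{(k-1)}\rangle$. Under $\mathrm{unif}(T)$, $\mathbb{E}[\mat S_{\tau_k}] = \tfrac{1}{|T|}\mathrm{diag}(d_i)$ where $d_i = |\{\tau \in T : i \in \tau\}| \ge c_{\min}(T)$, so $\mathbb{E}[\mat A\mat S_{\tau_k}\mat A^\ast] \succeq \tfrac{c_{\min}(T)}{|T|}\,\mat A\mat A^\ast$; combined with $\vr^{(k-1)} \in \mathrm{range}(\mat A)$, this gives the Rayleigh-quotient lower bound $\langle \vr^{(k-1)}, \mathbb{E}[\mat A\mat S_{\tau_k}\mat A^\ast]\vr^{(k-1)}\rangle \ge \tfrac{c_{\min}(T)}{|T|}\sigma_{\min}^2(\mat A)\|\vr^{(k-1)}\|^2$, and iterating yields~\eqref{eq:trbags_residual_theorem}. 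For the absolute-error bound~\eqref{eq:trbags_error_theorem}, under the uniqueness hypothesis $\tX - \tX^\ddagger$ lies in a subspace on which $\mat A$ is injective, so $\|\vx^{(k)} - \vx^\ddagger\| \le \sigma_{\max}(\bcirc(\tA^\dagger))\|\mat A(\vx^{(k)}-\vx^\ddagger)\|$ and $\|\mat A(\vx^{(0)}-\vx^\ddagger)\| \le \sigma_{\max}(\bcirc(\tA))\|\vx^{(0)}-\vx^\ddagger\|$, producing the factor $\kappa^2(\tA)$; this conversion lemma is presumably the same one used for Theorem~\ref{thm:trbgs} and I would cite it from Section~\ref{subsec:usefullemmas}.

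The main obstacle I anticipate is the bookkeeping in the first paragraph: verifying carefully that $\mathbb{E}[\mat A\mat S_{\tau_k}\mat A^\ast]$ really does inherit the clean diagonal-weighting structure through the block-circulant inflation, and in particular that the column-multiplicity count $c_{\min}(T)$ defined on indices in $[n]$ correctly governs the $np \times np$ matricized selection matrix (each tensor column index corresponds to a $p$-fold block of matrix columns, but every block is selected or not as a unit, so the per-index multiplicities carry over unchanged). A secondary subtlety is ensuring the sign/direction of the inequality $2\omega - \omega^2\sigma^2 > 0$ is used correctly: this is exactly the condition that makes the contraction factor lie in $(0,1)$, since $\sigma_{\min}^2(\bcirc(\tA)) \le \sigma^2$ forces $(2\omega-\omega^2\sigma^2)\tfrac{c_{\min}(T)}{|T|}\sigma_{\min}^2(\bcirc(\tA)) < 1$, and I would include a short remark confirming this so that~\eqref{eq:trbags_residual_theorem} is a genuine geometric decay. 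Everything else is the routine expand-square-and-take-expectation computation that I would not spell out in the sketch.
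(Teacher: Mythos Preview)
Your proposal is correct and follows essentially the same route as the paper's proof: expand the square of the residual after one step, bound the quadratic term by $\sigma^2$, use the column-multiplicity bound $c_{\min}(T)$ to control the expected linear term, and then apply the singular-value lower bound together with Lemma~\ref{lem:residual_to_error} for the absolute-error statement. The only cosmetic difference is that you matricize explicitly via $\bcirc/\unfold$ and phrase the key step as the semidefinite inequality $\E[\mat A\mat S_{\tau}\mat A^\ast]\succeq \tfrac{c_{\min}(T)}{|T|}\mat A\mat A^\ast$, whereas the paper stays at the tensor level and writes the equivalent inequality as $\sum_{\tau\in T}\|\tA_{:\tau:}^\ast(\cdot)\|_F^2 \ge c_{\min}(T)\|\tA^\ast(\cdot)\|_F^2$, invoking Lemma~\ref{lem:boundsforFrobeniusnorm} (which hides the matricization) for the final spectral bound.
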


The absence of the pseudoinverse in the TRBAGS formula 
\eqref{eq:TRBAGSupdate} leads to expected differences in the proof strategy. Unlike the convergence proof for TRBGS, which relies on Pythagoras-type orthogonality results guaranteed due to the use of projectors in the algorithm, the convergence of TRBAGS must be established by carefully bounding certain additional interaction terms that amounted to zero in the proof of convergence for TRBGS.

\subsection{Randomized block Gauss-Seidel methods for tensor regression with factorized operators}\label{subsec:facTRBGS}

Here, we introduce two methods, the Factorized Tensor Randomized Block Gauss-Seidel
(FacTRBGS) method, Algorithm~\ref{alg:factrbgs}, and the Factorized Tensor Randomized Block Averaging Gauss-Seidel (FacTRBAGS), Algorithm~\ref{alg:factrbags}, for solving the regression problem~\eqref{eq:fact_regression}. Both algorithms converge at least linearly in expectation to a solution, as we state in Theorem~\ref{thm:factrbgs} and Theorem~\ref{thm:factrbags}, respectively. For each theorem, convergence rates are given in terms of residual errors and, when possible, absolute errors. 

We recall the factorized tensor regression problem~\eqref{eq:fact_regression}, 
\begin{equation*}
    \min_{\tX \in \mathbb{R}^{n \times l \times p}} \|\tU \tV \tX - \tB\|_F^2, 
\end{equation*}
where $\tU \in \R^{m \times m_1 \times p}$ and $\tV \in \R^{m_1 \times n \times p}$ define the measurement operator, $\tB \in \mathbb{R}^{m \times l \times p}$ represents the measurements or data, and $\tX \in \mathbb{R}^{n \times l \times p}$ is the tensor of interest.  We recall that $\mathcal{P}(S)$ denotes the power set of a set $S$. Let $T_{\tU} \subset \mathcal{P}([m])$ be the index sets of the allowable (able to be selected) blocks of rows of $\tU$ and let $d_{T_{\tU}}$ is the size of the largest set in $T_{\tU}$.  Let $T_{\tV} \subset \mathcal{P}([m_1])$ be the index sets of the allowable blocks of rows of $\tV$ and let $d_{T_{\tV}}$ is the size of the largest set in $T_{\tV}$.  Let $\mathcal{D}(T_{\tU})$ and $\mathcal{D}(T_{\tV})$ be the sampling probability distributions over the sets $T_{\tU}$ and $T_{\tV}$, respectively.  Now, our first algorithm for this problem~\eqref{eq:fact_regression} interlaces steps of Algorithm~\ref{alg:trbgs} to address the outer system defined by $\tU$ and the inner system defined by $\tV$; the pseudocode for this method is given in Algorithm~\ref{alg:factrbgs}.

\begin{algorithm}
\caption{Factorized tensor randomized block Gauss-Seidel (FacTRBGS)}\label{alg:factrbgs}
\begin{algorithmic}
\Procedure{FacTRBGS}{$\tU$, $\tV$,$\tB$,$K$} 
\State {$\tens{Z}^{(0)} = 0$, $\tens{R}_1^{(0)} = \tens{U}\tens{Z}^{(0)}-\tens{B}$}
\State{$\tens{X}^{(0)} = 0$, $\tens{R}_2^{(0)} = \tens{V}\tens{X}^{(0)}-\tens{Z}$}
\For {$k=1, 2,...,K$} 
\State{Sample $\mu_k \sim \mathcal{D}(T_{\tU})$}
\State{$\tens{Z}^{(k)} = \tens{Z}^{(k-1)} -  \tens{E}_{\mu_k}(\tens{U}_{:\mu_k:}^{*}\tens{U}_{:\mu_k:})^{-1} \tens{U}_{:\mu_k:}^{*}\tens{R}_1^{(k-1)}$}
\State{$\tens{R}_1^{(k-1)} = \tens{U}\tens {Z}^{(k-1)} -\tens{B}$}
\State{Sample $\nu_k \sim \mathcal{D}(T_{\tV})$}
\State{$\tens{X}^{(k)} = \tens{X}^{(k-1)} -  \tens{E}_{\nu_k}(\tens{V}_{:\nu_k:}^{*}\tens{V}_{:\nu_k:})^{-1} \tens{V}_{:\nu_k:}^{*}\tens{R}_2^{(k-1)}$}
\State{$\tens{R}_2^{(k-1)} = \tens{V}\tens {X}^{(k-1)} -\tens{Z}^{(k)}$}
\EndFor{}
\Return{$\tens{X}^{(K)}$}
\EndProcedure{}
\end{algorithmic}
\end{algorithm}

We now state the main convergence result for Algorithm~\ref{alg:factrbgs} and formally
prove the theorem in Section~\ref{subsubsec:proofsoffacTRBGSandfacTRBAGS}.

\begin{theorem}\label{thm:errorsboundsonfactrbgs}
    Suppose the blocks $\mu_k$ and $\nu_k$ in Algorithm~\ref{alg:factrbgs} are sampled independently from sets $T_{\tU}$ and $T_{\tV}$ according to distributions $\mathcal{D}(T_{\tU})$ and $\mathcal{D}(T_{\tV})$, respectively.  
    Let $\tZ^{(k)}$ and $\tX^{(k)}$ denote the iterates in the $k$th iteration of Algorithm~\ref{alg:factrbgs} applied to the factorized system defined by $\tU\tV$ and $\tB$, and denote the orthogonal projection operators for blocks $\mu$ and $\nu$ as $$\tP_{\tU_{: \mu :}} := \tU_{: \mu :}(\tU_{: \mu :}^\ast\tU_{: \mu :})^{-1}\tU_{: \mu :}^\ast \text{ and } \tP_{\tV_{: \nu :}} := \tV_{: \nu :}(\tV_{: \nu :}^\ast\tV_{: \nu :})^{-1}\tV_{: \nu :}^\ast,$$ respectively. 
    Let $\tZ^\ddagger = \mathrm{argmin}_{\tZ} \|\tU \tZ - \tB\|_F^2$ and $\tX^\ddagger := \mathrm{argmin}_{\tX} \|\tV \tX - \tZ^\ddagger\|_F^2$. Define $\alpha_\tV = 1 - \sigma_{\min} (\E[\mathrm{bcirc}(\mathcal{P}_{\tV_{:\nu:}})])$, $\alpha_\tU = 1 - \sigma_{\min} (\E[\mathrm{bcirc}(\mathcal{P}_{\tU_{:\mu:}})])$ and $\alpha_{\max} = \max\{\alpha_\tU, \alpha_\tV\}$ and $\alpha_{\min} := \min\left\{ \frac{\alpha_{\tU}}{\alpha_{\tV}}, \frac{\alpha_{\tU}}{\alpha_{\tV}}\right\}$. 
    
    Then the expected residual error for the outer system in the $k$th iteration satisfies 
    \begin{equation} 
    \E\left[\|\tU \tZ^{(k)} - \tU \tZ^\ddagger\|_F^2 | \tZ^{(0)}\right] \le \alpha_{\tU}^k\| \tU \tZ^{(0)} - \tU \tZ^\ddagger\|_F^2,  \label{eq:factrbgs_outer_residual_theorem}
    \end{equation}
    where the expectation is taken with respect to the probability distribution $\mathcal{D}(T_{\tU})$. 
    
    Suppose additionally that the system $\tU\tZ = \tB_{\mathcal{R}_l(\tU)} := \tU \tZ^\ddagger$ has unique solution $\tZ = \tZ^\ddagger$, then the expected absolute error for the outer system in the $k$th iteration satisfies
    \begin{equation} 
    \E\left[\|\tZ^{(k)} - \tZ^\ddagger\|_F^2 | \tZ^{(0)}\right] \le \kappa^2(\tU)\alpha_{\tU}^k\|\tZ^{(0)} - \tZ^\ddagger\|_F^2, \label{eq:factrbgs_outer_error_theorem}
    \end{equation}
    where $\kappa^2(\tU) := \sigma_{\max}^2(\bcirc(\tU^\dagger)) \sigma_{\max}^2(\bcirc(\tU))$ and the expectation is taken with respect to the probability distribution $\mathcal{D}(T_{\tU})$.  In addition, we have that the expected residual error for the inner system in the $k$th iteration satisfies
    \begin{equation*}
    \E\left[\|\tV \tX^{(k)} - \tV \tX^\ddagger\|_F^2 | \tX^{(0)}\right] \le \begin{cases} 
    \alpha_\tV^k \|\tV\tX^\ddagger\|_F^2 + \kappa^2(\tU)\cfrac{\alpha_{\max}^k \alpha_{\min}}{1 - \alpha_{\min}}\|\tZ^\ddagger\|_F^2, & \text{if }\; \alpha_\tU \neq \alpha_\tV \\ \alpha_\tV^k \|\tV\tX^\ddagger\|_F^2 + \kappa^2(\tU)k \alpha_{\max}^k \|\tZ^\ddagger\|_F^2, & \text{if }\; \alpha_\tU = \alpha_\tV \end{cases}. \label{eq:factrbgs_inner_residual_theorem}
    \end{equation*}

    Furthermore, assume that the system $\tV \tX = \tZ^{\ddagger}_{\mathcal{R}_l(\tV)} := \tV \tX^{\ddagger}$ has unique solution $\tX = \tX^\ddagger$, then the expected absolute error for the inner system in the $k$th iteration satisfies
    \begin{equation} 
    \E\left[\|\tX^{(k)} - \tX^\ddagger\|_F^2 | \tX^{(0)}\right] \le \begin{cases}     
    \kappa^2(\tV)\alpha_\tV^k \|\tX^*\|_F^2 + \sigma_{\max}^2(\bcirc(\tV^\dagger))\kappa^2(\tU)\cfrac{\alpha_{\max}^k \alpha_{\min}}{1 - \alpha_{\min}}\|\tZ^\ddagger\|_F^2, & \text{if }\; \alpha_\tU \neq \alpha_\tV \\ \kappa^2(\tV)\alpha_\tV^k \|\tX^*\|_F^2 + \sigma_{\max}^2(\bcirc(\tV^\dagger))\kappa^2(\tU)k \alpha_{\max}^k \|\tZ^\ddagger\|_F^2, & \text{if }\; \alpha_\tU = \alpha_\tV \end{cases}, \label{eq:factrbgs_inner_error_theorem}
    \end{equation}
    where $\kappa^2(\tV) := \sigma_{\max}^2(\bcirc(\tV^\dagger)) \sigma_{\max}^2(\bcirc(\tV))$ and the expectation is taken with respect to the probability distribution $\mathcal{D}(T_{\tV})$.
    \label{thm:factrbgs}
\end{theorem}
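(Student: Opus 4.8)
The plan is to obtain the two outer-system bounds directly from the TRBGS convergence theorem and then to analyze the inner system as a TRBGS recursion perturbed by a geometrically decaying forcing term.

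\textbf{Outer system.} By construction, the subsequence $\tZ^{(0)},\tZ^{(1)},\dots$ generated by Algorithm~\ref{alg:factrbgs} coincides with the iterates of TRBGS (Algorithm~\ref{alg:trbgs}) applied to the system defined by $\tU$ and $\tB$ with blocks $\mu_k\sim\mathcal{D}(T_\tU)$, and this subsequence is independent of the samples $\nu_k$. Hence~\eqref{eq:factrbgs_outer_residual_theorem} and~\eqref{eq:factrbgs_outer_error_theorem} follow at once from Theorem~\ref{thm:trbgs} --- the residual bound unconditionally, the absolute-error bound under the stated outer-uniqueness hypothesis --- with $\tZ^\ddagger$, $\alpha_\tU$, $\kappa^2(\tU)$ playing the roles of $\tX^\ddagger$, $\alpha_\tA$, $\kappa^2(\tA)$.

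\textbf{Inner system, residual.} Fix an iteration $k$. Since $\tV\tE_{\nu_k}=\tV_{:\nu_k:}$, the inner update is $\tV\tX^{(k)}=\tV\tX^{(k-1)}-\tP_{\tV_{:\nu_k:}}(\tV\tX^{(k-1)}-\tZ^{(k)})$. Using that $\tV\tX^\ddagger$ is the orthogonal projection of $\tZ^\ddagger$ onto $\mathcal{R}_l(\tV)$, and that the range of $\tP_{\tV_{:\nu_k:}}$ lies inside $\mathcal{R}_l(\tV)$ (because $\bcirc(\tV_{:\nu_k:})=\bcirc(\tV)\bcirc(\tE_{\nu_k})$), so that $\tP_{\tV_{:\nu_k:}}$ absorbs the projection onto $\mathcal{R}_l(\tV)$ and annihilates its orthogonal complement, I would rewrite the inner residual recursion as
\[
\tV\tX^{(k)}-\tV\tX^\ddagger=(\tI-\tP_{\tV_{:\nu_k:}})\bigl(\tV\tX^{(k-1)}-\tV\tX^\ddagger\bigr)+\tP_{\tV_{:\nu_k:}}\bigl(\tZ^{(k)}-\tZ^\ddagger\bigr).
\]
The two summands lie in complementary orthogonal subspaces, so squaring the Frobenius norm removes the cross term; then a Pythagorean identity on the first summand, the conditional expectation over $\nu_k$ (controlled by the lemmas of Section~\ref{subsec:usefullemmas}: $\E_{\nu_k}\|\tP_{\tV_{:\nu_k:}}\tW\|_F^2\ge(1-\alpha_\tV)\|\tW\|_F^2$ for $\tW\in\mathcal{R}_l(\tV)$, and $\E_{\nu_k}\|\tP_{\tV_{:\nu_k:}}\tW\|_F^2\le\|\tW\|_F^2$ in general), and the independence of $\tZ^{(k)}$ from $\nu_k$ combine, after taking full expectations, to give
\[
\E\|\tV\tX^{(k)}-\tV\tX^\ddagger\|_F^2\le\alpha_\tV\,\E\|\tV\tX^{(k-1)}-\tV\tX^\ddagger\|_F^2+\E\|\tZ^{(k)}-\tZ^\ddagger\|_F^2 .
\]
Substituting the outer estimate $\E\|\tZ^{(k)}-\tZ^\ddagger\|_F^2\le\kappa^2(\tU)\alpha_\tU^k\|\tZ^\ddagger\|_F^2$ (from~\eqref{eq:factrbgs_outer_error_theorem} with $\tZ^{(0)}=0$, hence under the outer-uniqueness hypothesis) and iterating from $\|\tV\tX^{(0)}-\tV\tX^\ddagger\|_F^2=\|\tV\tX^\ddagger\|_F^2$ ($\tX^{(0)}=0$) yields
\[
\E\|\tV\tX^{(k)}-\tV\tX^\ddagger\|_F^2\le\alpha_\tV^k\|\tV\tX^\ddagger\|_F^2+\kappa^2(\tU)\|\tZ^\ddagger\|_F^2\sum_{j=1}^{k}\alpha_\tV^{k-j}\alpha_\tU^j .
\]
Bounding the convolution sum by a geometric-series estimate --- by $\alpha_{\max}^k\alpha_{\min}/(1-\alpha_{\min})$ when $\alpha_\tU\neq\alpha_\tV$, and noting it equals $k\,\alpha_{\max}^k$ when $\alpha_\tU=\alpha_\tV$ --- gives the claimed two-case inner residual bound.

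\textbf{Inner system, absolute error.} Under the extra hypothesis that $\tV\tX=\tV\tX^\ddagger$ has a unique solution, $\bcirc(\tV)$ is injective, so $\|\tV\tW\|_F\ge\sigma_{\min}(\bcirc(\tV))\|\tW\|_F$ for every $\tW$; with $\tW=\tX^{(k)}-\tX^\ddagger$ this gives $\|\tX^{(k)}-\tX^\ddagger\|_F^2\le\sigma_{\max}^2(\bcirc(\tV^\dagger))\|\tV\tX^{(k)}-\tV\tX^\ddagger\|_F^2$. Combining this with the inner residual bound and the elementary estimate $\sigma_{\max}^2(\bcirc(\tV^\dagger))\|\tV\tX^\ddagger\|_F^2\le\kappa^2(\tV)\|\tX^\ddagger\|_F^2$ gives~\eqref{eq:factrbgs_inner_error_theorem}.

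\textbf{Main obstacle.} The heart of the matter is the inner recursion: unlike a stand-alone TRBGS run, the right-hand side of the inner system is the \emph{moving} outer iterate $\tZ^{(k)}$, so its residual recursion carries the forcing term $\tP_{\tV_{:\nu_k:}}(\tZ^{(k)}-\tZ^\ddagger)$. The argument is clean only because of three features: (i) the orthogonal decomposition above, so that after squaring the forcing term does not interact with the contraction term; (ii) the independence of $\tZ^{(k)}$ from $\nu_k$, so the forcing term behaves as a deterministic perturbation in the $\nu_k$-average and its magnitude is handed off to the outer estimate; and (iii) the bookkeeping of the geometric convolution $\sum_j\alpha_\tV^{k-j}\alpha_\tU^j$, whose leading order genuinely bifurcates according to whether $\alpha_\tU=\alpha_\tV$. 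The remaining manipulations --- Fact~\ref{fact:equivalent_systems}, $\bcirc$ multiplicativity, and the singular-value bounds relating $\|\cdot\|_F$ on tensors to $\bcirc$ --- are routine once the lemmas of Section~\ref{subsec:usefullemmas} are available.
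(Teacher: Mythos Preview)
Your proposal is correct and follows essentially the same approach as the paper: the outer bounds are inherited directly from Theorem~\ref{thm:trbgs}, the inner residual is decomposed via the orthogonal projector $\tP_{\tV_{:\nu_k:}}$ into a contraction term plus a forcing term governed by $\|\tZ^{(k)}-\tZ^\ddagger\|_F^2$, and the resulting geometric convolution $\sum_{s=1}^k\alpha_\tU^s\alpha_\tV^{k-s}$ is handled in the two cases. Your shortcut of passing directly from $\tZ^\ddagger_{\mathcal{R}_l(\tV)}$ to $\tZ^\ddagger$ using that $\tP_{\tV_{:\nu_k:}}$ annihilates $\mathcal{R}_l(\tV)^\perp$ is equivalent to the paper's explicit decomposition of $\tZ^{(k)}$ into its $\mathcal{R}_l(\tV)$ and $\mathcal{R}_l(\tV)^\perp$ parts, and your final absolute-error step is exactly Lemma~\ref{lem:residual_to_error}.
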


Now, as with Algorithm~\ref{alg:trbags}, we replace the pseudoinverse in the previous method with an averaging approach. Our next algorithm for problem~\eqref{eq:fact_regression} interlaces steps of Algorithm~\ref{alg:trbags} to address the outer system defined by $\tU$ and the inner system defined by $\tV$; pseudocode for this method is given in Algorithm~\ref{alg:factrbags}. The step-sizes $\omega_1$ and $\omega_2$ can be set using the conditions stated in Theorem~\ref{thm:errorsboundsonfactrbags}.

\begin{algorithm}
\caption{Factorized tensor randomized block averaging Gauss-Seidel (FacTRBAGS)}\label{alg:factrbags}
\begin{algorithmic}
\Procedure{FacTRBAGS}{$\tU$, $\tV$,$\tB$,$K$} 
\State {$\tens{Z}^{(0)} = 0$, $\tens{R}_1^{(0)} = \tens{U}\tens{Z}^{(0)}-\tens{B}$}
\State{$\tens{X}^{(0)} = 0$, $\tens{R}_2^{(0)} = \tens{V}\tens{X}^{(0)}-\tens{Z}$}
\For {$k=1, 2,...,K$}
\State{Sample $\mu_k \sim \text{unif}(T_\tU)$}
\State{$\tens{Z}^{(k)} = \tens{Z}^{(k-1)} -  \omega_1 \tens{E}_{\mu_k} \tens{U}_{:\mu_k:}^{*}\tens{R}_1^{(k-1)}$}
\State{$\tens{R}_1^{(k-1)} = \tens{U}\tens {Z}^{(k-1)} -\tens{B}$}
\State{Sample $\nu_k \sim \text{unif}(T_\tV)$}
\State{$\tens{X}^{(k)} = \tens{X}^{(k-1)} -  \omega_2 \tens{E}_{\nu_k} \tens{V}_{:\nu_k:}^{*}\tens{R}_2^{(k-1)}$}
\State{$\tens{R}_2^{(k-1)} = \tens{V}\tens {X}^{(k-1)} -\tZ^{(k)}$}

\EndFor{}
\Return{$\tens{X}^{(K)}$}
\EndProcedure{}
\end{algorithmic}
\end{algorithm}

We now state the main convergence result for Algorithm~\ref{alg:factrbags} and formally
prove the theorem in Section~\ref{subsubsec:proofsoffacTRBGSandfacTRBAGS}.

\begin{theorem}\label{thm:errorsboundsonfactrbags}
    Suppose the blocks $\mu_k$ and $\nu_k$ in Algorithm~\ref{alg:factrbags} are sampled independently from sets $T_{\tU}$ and $T_{\tV}$ according to distributions $\mathrm{unif}(T_{\mu})$ and $\mathrm{unif}(T_{\nu})$, respectively.  
    Let $\tZ^{(k)}$ and $\tX^{(k)}$ denote the iterates in the $k$th iteration of Algorithm~\ref{alg:factrbags} applied to the factorized system defined by $\tU\tV$ and $\tB$. Let $\tZ^\ddagger = \mathrm{argmin}_{\tZ} \|\tU \tZ - \tB\|_F^2$ and $\tX^\ddagger := \mathrm{argmin}_{\tX} \|\tV \tX - \tZ^\ddagger\|_F^2$. 
  
    Define $\sigma_\tU^2 = \max_{\mu \in T_\tU} \sigma_{\max}^2(\bcirc(\tU_{: \mu :}))$ and let $c_{\min}(T_\tU) = \mathrm{argmin}_{i \in [m_2]} |\{\mu \in T_\tU : i \in \mu\}|$ be the minimum number of times any column appears in the set of column blocks of $\tU$. Suppose $2 \omega_1 - \omega_1^2  \sigma_\tU^2 > 0$. Then the expected residual error in the $k$th iteration satisfies 
    \begin{equation} 
        \E\left[\|\tU \tZ^{(k)} -\tU\tZ^\ddagger\|_F^2 | \tZ^{(0)}\right] \le \beta^k_{\tU}\|\tU \tZ^{(0)} - \tU\tZ^\ddagger\|_F^2,
      \label{eq:factrbags_outer_residual_theorem}
    \end{equation}
    where $\beta_\tU = \left(1 - (2  \omega_1 - \omega_1^2  \sigma_\tU^2)\frac{c_{\min}(T_\tU)}{|T_\tU|}\sigma_{\min}^2(\bcirc(\tU))\right)$. 
    
    Suppose additionally that the system $\tU\tZ = \tB_{\mathcal{R}_l(\tU)} := \tU \tZ^\ddagger$ has unique solution $\tZ = \tZ^\ddagger$, then the expected absolute error for the outer system in the $k$th iteration satisfies
    \begin{equation} 
        \E\left[\|\tZ^{(k)} - \tZ^\ddagger\|_F^2 | \tZ^{(0)}\right] \le \kappa^2(\tU)\beta_{\tU}^k\|\tZ^{(0)} - \tZ^\ddagger\|_F^2, \label{eq:factrbags_outer_error_theorem}
    \end{equation}
    where $\kappa^2(\tU) := \sigma_{\max}^2(\bcirc(\tU^\dagger)) \sigma_{\max}^2(\bcirc(\tU))$.  
    
    Furthermore, define $\sigma_\tV^2 = \max_{\nu \in T_\tV} \sigma_{\max}^2(\bcirc(\tV_{: \nu :}))$, $\gamma_{\tV} = 2\max_{\nu \in T_\tV} \sigma_{\max}^2(\bcirc(\tV_{: \nu :}))\sigma_{\max}^2(\bcirc(\tV_{: \nu :}^*)$ and let $c_{\min}(T_\tV) = \mathrm{argmin}_{i \in [m_1]} |\{\nu \in T_\tV : i \in \nu\}|$ denote the minimum number of times any column appears in the set of column blocks of $\tV$. Set $\beta_{\max}:= \max\{\beta_\tU, \beta_\tV\}$ and $\beta_{\min}:= \min\left\{ \frac{\beta_{\tU}}{\beta_{\tV}}, \frac{\beta_{\tU}}{\beta_{\tV}}\right\}$, and assume $2 \omega_2 - \omega_2^2  \sigma_\tV^2 > 0$. Then, the expected residual error for the inner system in the $k$th iteration satisfies
    \begin{equation}
        \E\left[\|\tV \tX^{(k)} - \tV \tX^\ddagger\|_F^2 | \tX^{(0)}\right] \le \begin{cases} 
    \beta_\tV^k \|\tV\tX^\ddagger\|_F^2 + \kappa^2(\tU)\omega_2^2\gamma_{\tV} \cfrac{\beta_{\max}^k \beta_{\min}}{1 - \beta_{\min}}\|\tZ^\ddagger\|_F^2, & \text{if }\; \beta_\tU \neq \beta_\tV \\ \beta_\tV^k \|\tV\tX^\ddagger\|_F^2 + \kappa^2(\tU)k \omega_2^2\gamma_{\tV} \beta_{\max}^k \|\tZ^\ddagger\|_F^2, & \text{if }\; \beta_\tU = \beta_\tV \end{cases}, \label{eq:factrbags_inner_residual_theorem}
    \end{equation}
    where $\beta_\tV = 2 - 2(2\omega_2 - \omega_2^2 \sigma_\tV^2)\frac{c_{\min}(T_\tV)}{|T_\tV|} \sigma^2_{\min}(\mathrm{bcirc}(\tV))$.
    
    If, additionally, we assume that the system $\tV \tX = \tZ^{\ddagger}_{\mathcal{R}_l(\tV)} := \tV \tX^{\ddagger}$ has unique solution $\tX = \tX^\ddagger$, then the expected absolute error for the inner system in the $k$th iteration satisfies
    \begin{equation} 
        \E\left[\|\tX^{(k)} - \tX^\ddagger\|_F^2 | \tX^{(0)}\right] \le 
        \begin{cases}     
        \kappa^2(\tV)\beta_\tV^k \|\tX^\ddagger\|_F^2 + \sigma_{\max}^2(\bcirc(\tV^\dagger))\kappa^2(\tU) \omega_2^2\gamma_\tV \cfrac{\beta_{\max}^k \beta_{\min}}{1 - \beta_{\min}}\|\tZ^\ddagger\|_F^2, & \text{if }\; \beta_\tU \neq \beta_\tV \\ 
        \kappa^2(\tV)\beta_\tV^k \|\tX^\ddagger\|_F^2 + \sigma_{\max}^2(\bcirc(\tV^\dagger))\kappa^2(\tU)k \omega_2^2\gamma_\tV \beta_{\max}^k \|\tZ^\ddagger\|_F^2, & \text{if }\; \beta_\tU = \beta_\tV \end{cases}, \label{eq:factrabgs_inner_error_theorem}
    \end{equation}
    where $\kappa^2(\tV) := \sigma_{\max}^2(\bcirc(\tV^\dagger)) \sigma_{\max}^2(\bcirc(\tV))$.
    \label{thm:factrbags}
\end{theorem}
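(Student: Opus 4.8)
The plan is to handle the $\tZ$- and $\tX$-iterations of Algorithm~\ref{alg:factrbags} separately: the first is exactly Algorithm~\ref{alg:trbags} run on $\tU\tZ=\tB$, and the second is Algorithm~\ref{alg:trbags} run on the \emph{drifting} consistent system $\tV\tX=\tZ^{(k)}$, whose right-hand side converges to $\tV\tX^{\ddagger}$ at the rate controlled by the first. All estimates will be obtained after passing to block-circulant matrices via Fact~\ref{fact:equivalent_systems}, using $\bcirc(\tA\tB)=\bcirc(\tA)\bcirc(\tB)$, $\bcirc(\tA^{\ast})=\bcirc(\tA)^{\top}$, $\|\tC\|_F=\|\unfold(\tC)\|_F$, and $\|\tC\tD\|_F\le\sigma_{\max}(\bcirc(\tC))\|\tD\|_F$, which is why the singular-value quantities in the statement appear. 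For the outer system the reduction is immediate: the update of $\tZ^{(k)}$ is the TRBAGS update~\eqref{eq:TRBAGSupdate} for $\tU,\tB$ with step size $\omega_1$, block set $T_\tU$ and uniform sampling, so Theorem~\ref{thm:trbags} applies verbatim, giving~\eqref{eq:factrbags_outer_residual_theorem} and, under the stated uniqueness of $\tZ^{\ddagger}$,~\eqref{eq:factrbags_outer_error_theorem}; since $\tZ^{(0)}=\vzero$ I record the consequence $\E\|\tZ^{(k)}-\tZ^{\ddagger}\|_F^2\le\kappa^2(\tU)\beta_\tU^k\|\tZ^{\ddagger}\|_F^2$, which drives the inner analysis.

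The core is a one-step recursion for the inner residual error. Set $\tE^{(k)}:=\tV\tX^{(k)}-\tV\tX^{\ddagger}$ and $\tD^{(k)}:=\tZ^{(k)}-\tZ^{\ddagger}$. Using $\tV_{:\nu_k:}=\tV\tE_{\nu_k}$ and $\tR_2^{(k-1)}=\tV\tX^{(k-1)}-\tZ^{(k)}$, the inner update yields
\[
\tE^{(k)}=\bigl(\tI-\omega_2\tV_{:\nu_k:}\tV_{:\nu_k:}^{\ast}\bigr)\tE^{(k-1)}-\omega_2\tV_{:\nu_k:}\tV_{:\nu_k:}^{\ast}\bigl(\tV\tX^{\ddagger}-\tZ^{(k)}\bigr).
\]
By the normal equations characterizing $\tX^{\ddagger}$ we have $\tV^{\ast}(\tV\tX^{\ddagger}-\tZ^{\ddagger})=\vzero$, and since $\tV_{:\nu_k:}^{\ast}=\tE_{\nu_k}^{\ast}\tV^{\ast}$ this gives $\tV_{:\nu_k:}^{\ast}(\tV\tX^{\ddagger}-\tZ^{\ddagger})=\vzero$; hence $\tV\tX^{\ddagger}-\tZ^{(k)}$ can be replaced by $-\tD^{(k)}$ in the last term, so $\tE^{(k)}=\tG+\tH$ with $\tG:=(\tI-\omega_2\tV_{:\nu_k:}\tV_{:\nu_k:}^{\ast})\tE^{(k-1)}$ and $\tH:=\omega_2\tV_{:\nu_k:}\tV_{:\nu_k:}^{\ast}\tD^{(k)}$. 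Conditioning on the history through iteration $k-1$ and on $\mu_k$ (so that $\tD^{(k)}$ is fixed and only $\nu_k$ is random), I use $\|\tG+\tH\|_F^2\le2\|\tG\|_F^2+2\|\tH\|_F^2$: the tensor $\tG$ is the residual of one TRBAGS step on the consistent system $\tV\tX=\tV\tX^{\ddagger}$ started at $\tX^{(k-1)}$, so the conditional expectation of $\|\tG\|_F^2$ is at most $\alpha_\tV\|\tE^{(k-1)}\|_F^2$ with $\alpha_\tV=1-(2\omega_2-\omega_2^2\sigma_\tV^2)\tfrac{c_{\min}(T_\tV)}{|T_\tV|}\sigma_{\min}^2(\bcirc(\tV))$ by Theorem~\ref{thm:trbags} (applied conditionally for one step), while $\|\tH\|_F^2\le\omega_2^2\sigma_{\max}^2(\bcirc(\tV_{:\nu_k:}))\sigma_{\max}^2(\bcirc(\tV_{:\nu_k:}^{\ast}))\|\tD^{(k)}\|_F^2\le\tfrac{\omega_2^2\gamma_\tV}{2}\|\tD^{(k)}\|_F^2$. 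Taking full expectations and writing $e_k:=\E\|\tE^{(k)}\|_F^2$, $d_k:=\E\|\tD^{(k)}\|_F^2$ gives $e_k\le\beta_\tV e_{k-1}+\omega_2^2\gamma_\tV d_k$ with $\beta_\tV=2\alpha_\tV$, exactly the $\beta_\tV$ of the statement.

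It remains to unroll. Substituting $d_k\le\kappa^2(\tU)\beta_\tU^k\|\tZ^{\ddagger}\|_F^2$ and iterating from $e_0=\|\tV\tX^{\ddagger}\|_F^2$ (since $\tX^{(0)}=\vzero$) gives
\[
e_k\le\beta_\tV^k\|\tV\tX^{\ddagger}\|_F^2+\omega_2^2\gamma_\tV\kappa^2(\tU)\|\tZ^{\ddagger}\|_F^2\sum_{j=1}^{k}\beta_\tV^{\,k-j}\beta_\tU^{\,j},
\]
and a geometric-sum estimate bounds the sum by $\tfrac{\beta_{\max}^k\beta_{\min}}{1-\beta_{\min}}$ when $\beta_\tU\neq\beta_\tV$ and by $k\beta_{\max}^k$ when $\beta_\tU=\beta_\tV$, which is~\eqref{eq:factrbags_inner_residual_theorem}. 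Finally, under the uniqueness of $\tX^{\ddagger}$ the matrix $\bcirc(\tV)$ has full column rank, so $\|\tX^{(k)}-\tX^{\ddagger}\|_F^2\le\sigma_{\max}^2(\bcirc(\tV^{\dagger}))\|\tV\tX^{(k)}-\tV\tX^{\ddagger}\|_F^2$; applying this to the previous display and bounding $\|\tV\tX^{\ddagger}\|_F^2\le\sigma_{\max}^2(\bcirc(\tV))\|\tX^{\ddagger}\|_F^2$ to form $\kappa^2(\tV)$ yields~\eqref{eq:factrabgs_inner_error_theorem}.

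The main obstacle is the one-step inner recursion. Unlike in TRBGS/FacTRBGS, the averaging update is not an orthogonal projection, so the cross term $2\langle\tG,\tH\rangle$ does not vanish and must be absorbed via $\|\tG+\tH\|_F^2\le2\|\tG\|_F^2+2\|\tH\|_F^2$; this is exactly what doubles the contraction factor to $\beta_\tV=2\alpha_\tV$ and the reason the condition $2\omega_2-\omega_2^2\sigma_\tV^2>0$ is, by itself, insufficient for the inner bound to be contractive (one also needs $\alpha_\tV<\tfrac12$). One must also choose the conditioning carefully so that Theorem~\ref{thm:trbags} can be applied to the $\tG$-part while $\tD^{(k)}$ is treated as a frozen perturbation whose magnitude decays at the outer rate $\beta_\tU$. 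With this in hand, the outer-system bounds, the unrolling, and the residual-to-absolute-error conversion are routine and mirror the proof of Theorem~\ref{thm:errorsboundsonfactrbgs} with every $\alpha$ replaced by the corresponding $\beta$.
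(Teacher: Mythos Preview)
Your proposal is correct and follows essentially the same route as the paper: apply Theorem~\ref{thm:trbags} verbatim to the outer $\tZ$-iteration, decompose the inner residual as $\tE^{(k)}=\tG+\tH$ with $\tG=(\tI-\omega_2\tV_{:\nu_k:}\tV_{:\nu_k:}^{\ast})\tE^{(k-1)}$ and $\tH=\omega_2\tV_{:\nu_k:}\tV_{:\nu_k:}^{\ast}\tD^{(k)}$, absorb the cross term via $\|\tG+\tH\|_F^2\le 2\|\tG\|_F^2+2\|\tH\|_F^2$, bound each piece exactly as you describe, unroll the resulting recursion as in the proof of Theorem~\ref{thm:errorsboundsonfactrbgs}, and finish with Lemma~\ref{lem:residual_to_error}. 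Your added remark that the doubling forces $\beta_\tV=2\alpha_\tV$ (so the hypothesis $2\omega_2-\omega_2^2\sigma_\tV^2>0$ alone does not make the inner bound contractive) is a useful observation that the paper leaves implicit.
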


\section{Proofs of Main Results}\label{subsec:proofs}

\subsection{Lemmas}\label{subsec:usefullemmas}

We present four lemmas that play a crucial role in the proofs of our main theoretical results, Theorems~\ref{thm:trbgs} through~\ref{thm:factrbags}.

The first lemma provides bounds on the Frobenius norm of the products of tensors using singular values, including a special case when a projector is one of the product factors. This lemma is key in deriving the tensors' conditioning constants. 

\begin{lemma}\label{lem:boundsforFrobeniusnorm}
    Let $\mathbf{A}$ and $\mathbf{B}$ be conformable matrices. Then
    \begin{equation}\label{eqn:boundsforFrobnorm}
           \sigma_{\min}^2(\mathbf{A}) \| \mathbf{B} \|_F^2 \leq \| \mathbf{A} \mathbf{B} \|_F^2 \leq \sigma_{\max}^2(\mathbf{A}) \| \mathbf{B} \|_F^2.
    \end{equation}
    If $\tM$ and $\tY$ are tensors and $\tP$ is an orthogonal projector tensor, such that the products $\tM \tY$ and $\tP \tM$ exist, it follows that
    \begin{align}
        \sigma_{\min}^2(\bcirc(\tM)) \|\tY\|_F^2 
        &\le \|\tM \tY\|_F^2 \le \sigma_{\max}^2(\bcirc(\tM)) \|\tY\|_F^2 \label{eqn:frobboundpart1}, \\
        \intertext{and}
        \sigma^2_{\min}(\mathbb{E}_{\mathcal{\tP}}[\mathrm{bcirc}(\tP)] \| \tM \|_F^2 
 &\le \mathbb{E}_{\tP}\left[ \| \tP\tM \|_F^2  \right] \le \sigma^2_{\max}(\mathbb{E}_{\mathcal{\tP}}[\mathrm{bcirc}(\tP)] \| \tM \|_F^2 \label{eqn:frobboundpart2}.
    \end{align}
    
\end{lemma}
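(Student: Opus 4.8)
The plan is to reduce all three inequalities to the single elementary matrix fact that for any matrix $\mathbf{C}$ and any vector $\vx$ one has $\sigma_{\min}^2(\mathbf{C})\|\vx\|^2 \le \|\mathbf{C}\vx\|^2 \le \sigma_{\max}^2(\mathbf{C})\|\vx\|^2$; this is immediate from $\|\mathbf{C}\vx\|^2 = \vx^\top\mathbf{C}^\top\mathbf{C}\vx$ and the Rayleigh-quotient bounds applied to the symmetric positive semidefinite matrix $\mathbf{C}^\top\mathbf{C}$, whose extreme eigenvalues are $\sigma_{\min}^2(\mathbf{C})$ and $\sigma_{\max}^2(\mathbf{C})$ by the definition of singular values used here. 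First I would establish~\eqref{eqn:boundsforFrobnorm} by decomposing the Frobenius norm over columns, $\|\mathbf{A}\mathbf{B}\|_F^2 = \sum_j \|\mathbf{A}\,\mathbf{B}_{:j}\|^2$, applying the vector bound to each column $\mathbf{B}_{:j}$, and summing, using $\sum_j \|\mathbf{B}_{:j}\|^2 = \|\mathbf{B}\|_F^2$. (The lower bound carries content only when $\mathbf{A}$ has full column rank, and is the trivial $0 \le \|\mathbf{A}\mathbf{B}\|_F^2$ otherwise.)

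For~\eqref{eqn:frobboundpart1} the two ingredients are that $\unfold$ merely restacks the entries of a tensor and hence preserves the Frobenius norm, and that by Definition~\ref{def:t-product} (cf.\ Fact~\ref{fact:equivalent_systems}) $\unfold(\tM\tY) = \bcirc(\tM)\,\unfold(\tY)$. Thus $\|\tM\tY\|_F^2 = \|\bcirc(\tM)\,\unfold(\tY)\|_F^2$ and $\|\tY\|_F^2 = \|\unfold(\tY)\|_F^2$, so~\eqref{eqn:frobboundpart1} is exactly~\eqref{eqn:boundsforFrobnorm} with $\mathbf{A} = \bcirc(\tM)$ and $\mathbf{B} = \unfold(\tY)$.

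For~\eqref{eqn:frobboundpart2} I would first record the structural fact that $\bcirc$ sends the t-product to matrix multiplication and the conjugate transpose to the matrix transpose, i.e.\ $\bcirc(\tP\tP) = \bcirc(\tP)\bcirc(\tP)$ and $\bcirc(\tP^\ast) = \bcirc(\tP)^\top$. Hence, when $\tP$ is an orthogonal projector tensor ($\tP^\ast = \tP$ and $\tP\tP = \tP$), the matrix $\bcirc(\tP)$ is symmetric and idempotent, i.e.\ an orthogonal projection matrix, so $\bcirc(\tP)^\top\bcirc(\tP) = \bcirc(\tP)$. Combined with the unfolding identity this gives
\[
\|\tP\tM\|_F^2 = \|\bcirc(\tP)\,\unfold(\tM)\|_F^2 = \big\langle \unfold(\tM),\, \bcirc(\tP)\,\unfold(\tM)\big\rangle ,
\]
an expression linear in the matrix $\bcirc(\tP)$. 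Taking expectations and writing $\mathbf{Q} := \E_{\tP}[\bcirc(\tP)]$ yields $\E_{\tP}[\|\tP\tM\|_F^2] = \big\langle \unfold(\tM),\, \mathbf{Q}\,\unfold(\tM)\big\rangle$, where $\mathbf{Q}$ is symmetric positive semidefinite, being an average of orthogonal projectors, so its singular values coincide with its eigenvalues. Expanding the quadratic form over the columns of $\unfold(\tM)$, applying the Rayleigh bounds for $\mathbf{Q}$ to each column, and resumming then gives $\sigma_{\min}(\mathbf{Q})\|\tM\|_F^2 \le \E_{\tP}[\|\tP\tM\|_F^2] \le \sigma_{\max}(\mathbf{Q})\|\tM\|_F^2$, which is~\eqref{eqn:frobboundpart2}; the extreme singular values of $\mathbf{Q}$ enter here to the first power, matching the form of the convergence rates in Theorem~\ref{thm:trbgs} and the subsequent theorems.

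The genuinely routine ingredients are the Rayleigh estimate and the norm-invariance of $\unfold$. The step I expect to require the most care — and which I would cite from the t-product literature rather than reprove here — is the structural claim that $\bcirc$ intertwines the t-product and conjugate transpose with ordinary matrix multiplication and transpose, since it is this that guarantees $\bcirc(\tP)$ is a genuine orthogonal projection matrix and hence that $\bcirc(\tP)^\top\bcirc(\tP) = \bcirc(\tP)$. After that, the only element in~\eqref{eqn:frobboundpart2} beyond~\eqref{eqn:frobboundpart1} is linearity of expectation together with the positive-semidefiniteness of $\mathbf{Q}$ used in the Rayleigh step.
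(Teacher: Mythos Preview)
Your argument is essentially the same as the paper's: column-wise decomposition plus Rayleigh bounds for~\eqref{eqn:boundsforFrobnorm}, the identity $\|\tM\tY\|_F = \|\bcirc(\tM)\unfold(\tY)\|_F$ for~\eqref{eqn:frobboundpart1}, and the idempotence of $\bcirc(\tP)$ to reduce $\|\tP\tM\|_F^2$ to a form linear in $\bcirc(\tP)$ before taking expectation for~\eqref{eqn:frobboundpart2}. Your extra care in justifying why $\bcirc(\tP)$ is an orthogonal projection matrix, and your observation that the extreme singular values of $\E_{\tP}[\bcirc(\tP)]$ enter~\eqref{eqn:frobboundpart2} to the first power (consistent with how the bound is actually invoked in Theorem~\ref{thm:trbgs}), are both correct and sharpen what the paper leaves implicit.
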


\begin{proof}
    We start by writing the Frobenius norm of the product $\mathbf{A} \mathbf{B}$ in terms of a 2-norm:
    \begin{align*}
        \| \mathbf{A}\mathbf{B} \|_F^2
        &= \sum\limits_{i} \| \mathbf{A} \mathbf{B}_{:i} \|_2 \\
        &= \sum\limits_{i} \langle \mathbf{A} \mathbf{B}_{:i},  \mathbf{A} \mathbf{B}_{:i} \rangle \\
        &= \sum\limits_{i} \langle \mathbf{A}^*\mathbf{A} \mathbf{B}_{:i}, \mathbf{B}_{:i} \rangle \\
        &= \sum\limits_{i} \mathbf{B}_{:i}^\top(\mathbf{A}^*\mathbf{A}) \mathbf{B}_{:i}.
        \intertext{The equation in~\eqref{eqn:boundsforFrobnorm} is readily obtained from observing}
        \lambda_{\min}(\mathbf{A}^*\mathbf{A}) \| \mathbf{B}_{:i} \|_2^2  &\leq \mathbf{B}_{:i}^\top(\mathbf{A}^*\mathbf{A}) \mathbf{B}_{:i} \leq \lambda_{\max}(\mathbf{A}^*\mathbf{A})^2 \| \mathbf{B}_{:i} \|_2^2, 
        \intertext{so}
         \sigma^2_{\min}(\mathbf{A}) \| \mathbf{B}_{:i} \|_2^2  &\leq \mathbf{B}_{:i}^\top(\mathbf{A}^*\mathbf{A}) \mathbf{B}_{:i} \leq \sigma^2_{\max}(\mathbf{A}) \| \mathbf{B}_{:i} \|_2^2, 
    \end{align*}
    To obtain~\eqref{eqn:frobboundpart1}, it suffices to note 
    \[
        \| \tM \tY \|_F^2
        = \| \bcirc(\tM) \unfold(\tY) \|_F^2. 
    \]

    \noindent For~\eqref{eqn:frobboundpart2}, we use an argument similar to those above to arrive at 
    \begin{align*}
        \mathbb{E}_{\mathcal{\tP}}\left[ \| \tP\tM \|_F^2  \right] 
        &= \sum\limits_{i}
        \mathbb{E}_{\mathcal{\tP}}\langle \bcirc{(\tP)} \unfold(\tM)_{:i}, \bcirc{(\tP)} \unfold(\tM)_{:i} \rangle \\
        &= \sum\limits_{i}
        \mathbb{E}_{\tP}
        \langle \bcirc{(\tP)} \unfold(\tM)_{:i}, \unfold(\tM)_{:i} \rangle \\
        &= \sum\limits_{i}
        \langle \mathbb{E}_{\tP}[\bcirc{(\tP)}] \unfold(\tM)_{:i}, \unfold(\tM)_{:i} \rangle,
    \end{align*}
    where we have used the fact that $\bcirc{(\tP)}$ is an orthogonal projection. Equation~\eqref{eqn:frobboundpart2} follows from the fact that $\mathbb{E}_{\tP}[\bcirc{(\tP)}]$ is symmetric.
\end{proof}

The following lemma gives a formula for the pseudoinverse of a tensor for a system with a unique solution. This is particularly useful for the TRBGS and FacTRBGS algorithms, which are projection-based block methods.

\begin{lemma}
    Suppose $\tA \tX = \tA \tX^*$ has a unique solution $\tX = \tX^*$, then $\tA^\dagger \tA = \tI$, where the pseudoinverse is $\tA^\dagger = (\tA^\top \tA)^{-1} \tA^\top$, and $\tI$ is the tensor identity.\label{lemma:pseudoinverse}
\end{lemma}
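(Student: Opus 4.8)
The plan is to push the entire statement through the map $\bcirc$, using Fact~\ref{fact:equivalent_systems} together with the standard properties of the t-product: $\bcirc$ is an injective ring homomorphism, i.e. $\bcirc(\tA\tB) = \bcirc(\tA)\bcirc(\tB)$, it respects transposition, $\bcirc(\tA^\top) = \bcirc(\tA)^\top$, and it sends the identity tensor to the identity matrix, $\bcirc(\tI) = \mat I_{np}$. Since injectivity of $\bcirc$ lets us read tensor identities off from matrix identities, it suffices to prove the matrix statement $\bcirc(\tA)^\dagger\bcirc(\tA) = \mat I_{np}$ and to check along the way that $\tA^\dagger$ and $(\tA^\top\tA)^{-1}$ are well defined as tensors.

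First I would unpack the hypothesis. To say that $\tA\tX = \tA\tX^*$ has the unique solution $\tX = \tX^*$ is to say that the $\R$-linear map $\tX \mapsto \tA\tX$ on $\R^{n\times l\times p}$ has trivial kernel. By Fact~\ref{fact:equivalent_systems}, after applying the bijection $\unfold$ this map becomes $\vy \mapsto \bcirc(\tA)\vy$ on $\R^{np\times l}$, so its kernel is trivial exactly when $\bcirc(\tA)$ has full column rank. In particular $\bcirc(\tA^\top\tA) = \bcirc(\tA)^\top\bcirc(\tA)$ is an invertible matrix.

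Next I would verify that $\tA^\dagger$ makes sense. An invertible block-circulant matrix has a block-circulant inverse (block-circulant matrices are block-diagonalized by the DFT and are closed under inversion), so there is a tensor $\tC$ with $\bcirc(\tC) = \bcirc(\tA^\top\tA)^{-1}$; since $\bcirc(\tC\,(\tA^\top\tA)) = \mat I_{np} = \bcirc(\tI)$, and likewise on the other side, injectivity of $\bcirc$ gives $\tC = (\tA^\top\tA)^{-1}$ in the sense of the tensor inverse. Hence $\bcirc(\tA^\dagger) = \bcirc(\tA^\top\tA)^{-1}\bcirc(\tA)^\top = (\bcirc(\tA)^\top\bcirc(\tA))^{-1}\bcirc(\tA)^\top$, which under the full-column-rank hypothesis is precisely the Moore--Penrose pseudoinverse of $\bcirc(\tA)$. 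Then $\bcirc(\tA^\dagger\tA) = \bcirc(\tA^\dagger)\bcirc(\tA) = (\bcirc(\tA)^\top\bcirc(\tA))^{-1}\bcirc(\tA)^\top\bcirc(\tA) = \mat I_{np} = \bcirc(\tI)$, and one last application of injectivity of $\bcirc$ yields $\tA^\dagger\tA = \tI$.

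The only genuine content here, beyond bookkeeping, is the translation of ``unique solution'' into ``full column rank of $\bcirc(\tA)$'' (immediate from Fact~\ref{fact:equivalent_systems}) and the closure of block-circulant matrices under inversion; I do not expect a real obstacle. The point to be careful about is keeping the tensor notions of inverse and pseudoinverse aligned with their matrix counterparts across $\bcirc$, so that ``$(\tA^\top\tA)^{-1}$'' in the statement is legitimately the $\bcirc$-preimage of $(\bcirc(\tA)^\top\bcirc(\tA))^{-1}$.
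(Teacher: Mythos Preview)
Your proposal is correct and follows essentially the same route as the paper: push the hypothesis through $\bcirc$ via Fact~\ref{fact:equivalent_systems} to get full column rank of $\bcirc(\tA)$, then compute $\bcirc(\tA^\dagger\tA) = (\bcirc(\tA)^\top\bcirc(\tA))^{-1}\bcirc(\tA)^\top\bcirc(\tA) = \mat I = \bcirc(\tI)$ and pull back. If anything, you are more careful than the paper in explicitly justifying that $(\tA^\top\tA)^{-1}$ exists as a tensor (via closure of block-circulants under inversion) and in invoking injectivity of $\bcirc$ for the final step.
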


\begin{proof}
    By the assumption and Fact~\ref{fact:equivalent_systems}, $\bcirc(\tA) \mathbf{X} = \tB := \bcirc(\tA)\unfold(\tX^*)$ has a unique solution $\mathbf{X} = \unfold(\tX^*)$.  We then have that $\bcirc(\tA)^\dagger = (\bcirc(\tA)^\top \bcirc(\tA))^{-1} \bcirc(\tA)^\top$.  Thus, we have 
    \begin{align*}
        \bcirc(\tA^\dagger \tA) &= \bcirc(\tA^\dagger)\bcirc(\tA)
        \\&= (\bcirc(\tA)^\top \bcirc(\tA))^{-1} \bcirc(\tA)^\top \bcirc(\tA)
        \\&= \mathbf{I}
        \\&= \bcirc(\tI).
    \end{align*}
    Thus, we have $\tA^{\dagger}\tA = \tI$.
\end{proof}

The third lemma produces a bound on the absolute error norm on the approximate solution of a linear system given that the bound of the norm of the residual error is known. It therefore generalizes the derivation of the absolute error from the residual error for all the cases we discuss. 

\begin{lemma}\label{lem:abserrorfromereserror}
Suppose that the system $\tA\tX = \tY := \tA \tX^\ddagger$ has unique solution $\tX = \tX^\ddagger$, and assume that $$\|\tA \tX^{(k)} - \tA \tX^\ddagger\|_F^2 \le \beta \|\tA \tX^{(0)} - \tA \tX^\ddagger\|_F^2 + \gamma$$ for some values $\beta$ and $\gamma$.
Then $$\|\tX^{(k)} - \tX^*\|_F^2 \le \kappa^2(\tA) \beta \|\tX^{(0)} - \tX^*\|_F^2 + \sigma_{\max}^2(\bcirc(\tA^\dagger)) \gamma$$ where $\kappa^2(\tA) := \sigma_{\max}^2(\bcirc(\tA^\dagger)) \sigma_{\max}^2(\bcirc(\tA)).$\label{lem:residual_to_error}
\end{lemma}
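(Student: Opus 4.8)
The plan is to reduce the absolute error to the residual error by exploiting the uniqueness hypothesis. Since $\tA\tX = \tA\tX^\ddagger$ has the unique solution $\tX = \tX^\ddagger$, Lemma~\ref{lemma:pseudoinverse} applies and gives $\tA^\dagger\tA = \tI$. As this is an identity of tensors, $\tA^\dagger\tA\tW = \tW$ for every conformable $\tW$; applying it to $\tW = \tX^{(k)} - \tX^\ddagger$ yields
\[
\tX^{(k)} - \tX^\ddagger = \tA^\dagger\tA\left(\tX^{(k)} - \tX^\ddagger\right) = \tA^\dagger\left(\tA\tX^{(k)} - \tA\tX^\ddagger\right).
\]

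Next I would invoke the left-multiplication bound~\eqref{eqn:frobboundpart1} of Lemma~\ref{lem:boundsforFrobeniusnorm} twice. First, pulling the factor $\tA^\dagger$ out of the Frobenius norm,
\[
\|\tX^{(k)} - \tX^\ddagger\|_F^2 = \left\|\tA^\dagger\left(\tA\tX^{(k)} - \tA\tX^\ddagger\right)\right\|_F^2 \le \sigma_{\max}^2(\bcirc(\tA^\dagger))\,\left\|\tA\tX^{(k)} - \tA\tX^\ddagger\right\|_F^2.
\]
Then substitute the assumed residual bound $\|\tA\tX^{(k)} - \tA\tX^\ddagger\|_F^2 \le \beta\|\tA\tX^{(0)} - \tA\tX^\ddagger\|_F^2 + \gamma$, and bound the initial residual by the upper inequality of~\eqref{eqn:frobboundpart1} applied with $\tM = \tA$ and $\tY = \tX^{(0)} - \tX^\ddagger$, namely $\|\tA\tX^{(0)} - \tA\tX^\ddagger\|_F^2 \le \sigma_{\max}^2(\bcirc(\tA))\|\tX^{(0)} - \tX^\ddagger\|_F^2$. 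Collecting the constants and using $\kappa^2(\tA) := \sigma_{\max}^2(\bcirc(\tA^\dagger))\sigma_{\max}^2(\bcirc(\tA))$ (together with the identification $\tX^* = \tX^\ddagger$ under the uniqueness assumption) produces exactly the claimed bound.

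I do not anticipate a genuine obstacle: the argument is a short chaining of Lemma~\ref{lemma:pseudoinverse} with the norm inequalities of Lemma~\ref{lem:boundsforFrobeniusnorm}. The only points requiring care are (i) checking that the uniqueness hypothesis stated here is precisely the hypothesis of Lemma~\ref{lemma:pseudoinverse}, so that $\tA^\dagger\tA = \tI$ is legitimately available; (ii) tracking which tensor plays the role of the ``matrix'' factor in each application of~\eqref{eqn:frobboundpart1}, namely $\tA^\dagger$ in the first use and $\tA$ in the second; and (iii) reconciling the notation $\tX^*$ appearing in the conclusion with $\tX^\ddagger$ in the hypothesis. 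The resulting proof should be only a few lines long.
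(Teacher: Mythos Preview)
Your proposal is correct and matches the paper's proof essentially line for line: both use Lemma~\ref{lemma:pseudoinverse} to write $\tX^{(k)} - \tX^\ddagger = \tA^\dagger\tA(\tX^{(k)} - \tX^\ddagger)$, then apply the upper bound in~\eqref{eqn:frobboundpart1} once for $\tA^\dagger$ and once for $\tA$, with the assumed residual inequality sandwiched in between. Your observations about the hypothesis of Lemma~\ref{lemma:pseudoinverse} and the $\tX^*$ versus $\tX^\ddagger$ notation are also apt.
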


\begin{proof}
    We have 
    \begin{align*}
        \|\tX^{(k)} - \tX^\ddagger\|_F^2 &= \|\tA^\dagger \tA(\tX^{(k)} - \tX^\ddagger)\|_F^2
        \\&\le \sigma_{\max}^2(\bcirc(\tA^\dagger)) \|\tA(\tX^{(k)} - \tX^\ddagger)\|_F^2
        \\&\le \sigma_{\max}^2(\bcirc(\tA^\dagger)) \left[\beta \|\tA(\tX^{(0)} - \tX^\ddagger)\|_F^2 + \gamma\right]
        \\&\le \sigma_{\max}^2(\bcirc(\tA^\dagger))\sigma_{\max}^2(\bcirc(\tA))\beta \|\tX^{(0)} - \tX^\ddagger\|_F^2 + \sigma_{\max}^2(\bcirc(\tA^\dagger)) \gamma
    \end{align*}
    where the first equation follows from Lemma~\ref{lemma:pseudoinverse} and the first and last inequalities follow from Lemma~\ref{lem:boundsforFrobeniusnorm} (\ref{eqn:frobboundpart1}).
\end{proof}

Finally, our last lemma, which comes from~\cite{{castillo2024randomized}}, provides a standard orthogonality result between the minimal residual error tensor and the range space of $\tA$.

\begin{lemma}\label{lem:usual_least_squares_theory}\cite[Lemma 5]{castillo2024randomized}
    Let $\tY_{\mathcal{R}_l(\tA)^\perp}$ be the solution of $\tA^* \tW = \mathbf{0}$ that minimizes $\|\tY - \tW\|_F$ for $\tY \in \mathbb{R}^{m \times l \times p}$.  Define $\tY_{\mathcal{R}_l(\tA)} = \tY - \tY_{\mathcal{R}_l(\tA)^\perp}.$ Let $\tX^\ddagger$ be the least-norm minimizer of $\|\tA \tX - \tY\|_F^2$. Then $$\tY_{\mathcal{R}_l(\tA)} = \tA \tX^\ddagger.$$
\end{lemma}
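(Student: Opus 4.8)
The plan is to push the whole statement through the matricization of Fact~\ref{fact:equivalent_systems} and reduce it to the classical orthogonal-decomposition fact behind linear least squares. First I would record the elementary bookkeeping: $\unfold$ is a Frobenius isometry with inverse $\fold$, so that $\|\tA\tX - \tY\|_F = \|\bcirc(\tA)\unfold(\tX) - \unfold(\tY)\|_F$ and $\|\tY - \tW\|_F = \|\unfold(\tY) - \unfold(\tW)\|_F$; and, exactly as in the proof of Lemma~\ref{lemma:pseudoinverse}, $\bcirc(\tA^*) = \bcirc(\tA)^\top$, so the constraint $\tA^*\tW = \mathbf{0}$ is equivalent to every column of $\unfold(\tW)$ lying in $\mathrm{null}(\bcirc(\tA)^\top) = \mathcal{R}(\bcirc(\tA))^\perp$.

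With this translation in hand I would argue as follows. Minimizing $\|\unfold(\tY) - \unfold(\tW)\|_F^2 = \sum_i \|\unfold(\tY)_{:i} - \unfold(\tW)_{:i}\|_2^2$ over all $\tW$ for which every column of $\unfold(\tW)$ lies in $\mathcal{R}(\bcirc(\tA))^\perp$ decouples over columns, and each column-wise minimizer is the orthogonal projection of the corresponding column of $\unfold(\tY)$ onto $\mathcal{R}(\bcirc(\tA))^\perp$. Hence $\unfold(\tY_{\mathcal{R}_l(\tA)^\perp}) = (\mathbf{I} - \bcirc(\tA)\bcirc(\tA)^\dagger)\unfold(\tY)$, and therefore $\unfold(\tY_{\mathcal{R}_l(\tA)}) = \bcirc(\tA)\bcirc(\tA)^\dagger\unfold(\tY)$, the orthogonal projection of $\unfold(\tY)$ onto $\mathcal{R}(\bcirc(\tA))$. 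On the other hand, the least-norm minimizer of $\|\bcirc(\tA)\unfold(\tX) - \unfold(\tY)\|_F^2$ is $\unfold(\tX^\ddagger) = \bcirc(\tA)^\dagger\unfold(\tY)$, by the standard pseudoinverse characterization applied column-wise, so $\unfold(\tA\tX^\ddagger) = \bcirc(\tA)\unfold(\tX^\ddagger) = \bcirc(\tA)\bcirc(\tA)^\dagger\unfold(\tY)$. The two expressions coincide, and applying $\fold$ yields $\tY_{\mathcal{R}_l(\tA)} = \tA\tX^\ddagger$.

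The only real subtlety is the bookkeeping in the middle paragraph: verifying that $\unfold$ converts both the minimization defining $\tY_{\mathcal{R}_l(\tA)^\perp}$ and the least-squares problem defining $\tX^\ddagger$ into their block-circulant, column-decoupled counterparts, and that $\bcirc(\tA^*) = \bcirc(\tA)^\top$. Once these are in place the identity is nothing more than the familiar statement that the least-squares residual is the orthogonal projection of the data onto the orthogonal complement of the column space, while the least-squares fit is the projection onto the column space itself. Since the lemma is quoted verbatim as \cite[Lemma 5]{castillo2024randomized}, it is also entirely legitimate to cite that source in place of reproducing the verification.
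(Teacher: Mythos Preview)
Your proposal is correct. The paper does not supply its own proof of this lemma at all; it simply quotes the statement and attributes it to \cite[Lemma~5]{castillo2024randomized}, so your matricization argument via Fact~\ref{fact:equivalent_systems}, the identity $\bcirc(\tA^*)=\bcirc(\tA)^\top$, and column-wise application of the standard pseudoinverse/orthogonal-projection characterization actually provides more detail than the paper itself, which is entirely consistent with your closing remark that citing the source would also suffice.
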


We now prove our main results, Theorems~\ref{thm:trbgs}--\ref{thm:factrbags}.

\subsection{Proofs of main results for TRBGS and TRBAGS}\label{subsubsec:proofsofTRBGSandTRBAGS}

We prove Theorems~\ref{thm:trbgs} and~\ref{thm:trbags}, which state that Algorithms~\ref{alg:trbgs} and~\ref{alg:trbags} converge at least linearly in expectation to solutions of problem~\eqref{eq:regression}.

\begin{proof}[Proof of Theorem~\ref{thm:trbgs}]
 Assume that in the $k$-th iteration, the column block $\tau_k$ is chosen to perform the TRBGS update. Then, we have
 \begin{align*}
     \tA\tX^{(k)} - \tA\tX^\ddagger  &= \tA\tX^{(k-1)} - \tA\tX^{\ddagger} - \tA_{:\tau_k:} (\tA_{:\tau_k:}^* \tA_{:\tau_k:})^{-1}\tA_{:\tau_k:}^* \left( \tA \tX^{(k-1)} - \tB \right)\\
     &= \tA\tX^{(k-1)} - \tA\tX^{\ddagger} - \tP_{\tA_{:\tau_k:}} \left(  \tA\tX^{(k-1)} - \tA\tX^{\ddagger}\right) + \tP_{\tA_{:\tau_k:}}\tB_{\mathcal{R}_l(\tA)^\perp} \\
     &= (\tI - \tP_{\tA_{:\tau_k:}})(\tA\tX^{(k-1)} - \tA\tX^{\ddagger}),
 \end{align*}
 where the second and third equalities follow from~\cite[Lemma 5]{castillo2024randomized} which shows that $\tB = \tA\tX^\ddagger + \tB_{\mathcal{R}_l(\tA)^\perp}$ where $\tA^* \tB_{\mathcal{R}_l(\tA)^\perp} = \mathbf{0}$.  

Applying  the norm and taking the expectation conditioned on the first $(k-1)$ iterations, we have
\begin{align*} 
    \E^{(k-1)} \| \tA\tX^{(k)} - \tA\tX^{\ddagger}\|_F^2 
    &\leq (1 - \sigma_{\min} (\E[\text{bcirc}(\tP_{\tA_{:\tau_k:}})])) \| \tA\tX^{(k-1)} - \tA\tX^{\ddagger} \|_F^2, \nonumber 
\end{align*}
by \eqref{eqn:frobboundpart1} in Lemma~\ref{lem:boundsforFrobeniusnorm}.  
Then, \eqref{eq:trbgs_residual_theorem} follows by utilizing the tower property of conditional expectation and the fact that the sampling distribution is the same in each iteration, and \eqref{eq:trbgs_error_theorem} follows from Lemma~\ref{lem:residual_to_error}.
 \end{proof}

 \begin{proof}[Proof of Theorem~\ref{thm:trbags}]
First, we note
\begin{align*}
    \tA\left(\tX^k - \tX^\ddagger\right) &= \tA\tX^{(k-1)} - \tA\tX^\ddagger- \omega\tA \tens{E}_{\tau_k} \tens{A}_{:\tau_k:}^{*}\tens{R}^{(k-1)} \\ 
    &= \tA\tX^{(k-1)}- \tA\tX^\ddagger - \omega\tens{A}_{:\tau_k:}\tens{A}_{:\tau_k:}^{*}\left(\tA\tens{X}^{(k-1)} - 
    \tA\tX^\ddagger \right) + \omega \tens{A}_{:\tau_k:}\tens{A}_{:\tau_k:}^{*} \tB_{\mathcal{R}_l(\tA)^\perp}\\
    &= \left(\tI - \omega\tens{A}_{:\tau_k:}\tens{A}_{:\tau_k:}^{*}\right)\left(\tA\tens{X}^{(k-1)} - \tA\tX^\ddagger \right),
    \end{align*}
where the second and third equalities follow from Lemma~\ref{lemma:pseudoinverse} which shows that $\tB = \tA\tX^\ddagger + \tB_{\mathcal{R}_l(\tA)^\perp}$ where $\tA^* \tB_{\mathcal{R}_l(\tA)^\perp} = \mathbf{0}$.

Let $\sigma^2 = \max_{\tau \in T} \sigma_{\max}^2(\bcirc(\tA_{: \tau :})).$ Then,
   \begin{align*}
       \|\tA \tX^{(k)} - \tA\tX^\ddagger\|_F^2 &= \|\left(\tI - \omega \tA_{: \tau_k :} \tA_{: \tau_k :}^*\right) (\tA \tX^{(k-1)} - \tA\tX^\ddagger)\|_F^2
       \\&= \|\tA \tX^{(k-1)} - \tA\tX^\ddagger\|_F^2 - 2  \omega \|\tA_{: \tau_k :}^* (\tA \tX^{(k-1)} - \tA\tX^\ddagger)\|_F^2 + \omega^2 \|\tA_{: \tau_k :} \tA_{: \tau_k :}^* (\tA \tX^{(k-1)} - \tA\tX^\ddagger)\|_F^2\\ 
       &\le \|\tA \tX^{(k-1)} - \tA\tX^\ddagger\|_F^2 - 2  \omega \|\tA_{: \tau_k :}^* (\tA \tX^{(k-1)} - \tA\tX^\ddagger)\|_F^2 \\
       &+ \omega^2 \sigma_{\max}^2(\bcirc(\tA_{: \tau_k :}) \|\tA_{: \tau_k :}^* (\tA \tX^{(k-1)} - \tA\tX^\ddagger)\|_F^2 \\
       &\le \|\tA \tX^{(k-1)} - \tA\tX^\ddagger\|_F^2 - 2  \omega \|\tA_{: \tau_k :}^* (\tA \tX^{(k-1)} - \tA\tX^\ddagger)\|_F^2 + \omega^2 \sigma^2 \|\tA_{: \tau_k :}^* (\tA \tX^{(k-1)} - \tA\tX^\ddagger)\|_F^2\\
       &= \|\tA \tX^{(k-1)} - \tA\tX^\ddagger\|_F^2 - (2\omega - \omega^2 \sigma^2) \|\tA_{: \tau_k :}^* (\tA \tX^{(k-1)} - \tA\tX^\ddagger)\|_F^2, 
   \end{align*}  
   where the first inequality follows from formula~\eqref{eqn:frobboundpart1} in Lemma~\ref{lem:boundsforFrobeniusnorm}.

Taking expectation conditioned on the first $k-1$ iterations yields
\begin{align*}
\E^{(k-1)}\left[\|(\tA \tX^{(k)} -\tA\tX^\ddagger)\|_F^2\right] =& \|\tA \tX^{(k-1)} - \tA\tX^\ddagger\|_F^2 - \left(2  \omega - \omega^2  \sigma^2 \right)\E^{(k-1)}\left[\| \tA_{: \tau_k :}^* (\tA \tX^{(k-1)} - \tA\tX^\ddagger)\|_F^2\right]\\
=& \|\tA \tX^{(k-1)} - \tA\tX^\ddagger\|_F^2 - \frac{2  \omega - \omega^2  \sigma^2}{|T|} \sum_{\tau \in T} \| \tA_{: \tau :}^* (\tA \tX^{(k-1)} - \tA\tX^\ddagger)\|_F^2\\
\le& \|\tA \tX^{(k-1)} - \tA\tX^\ddagger\|_F^2 - (2  \omega - \omega^2  \sigma^2)\frac{c_{\min}(T)}{|T|}  \| \tA^* (\tA \tX^{(k-1)} - \tA\tX^\ddagger)\|_F^2\\
\le& \left(1 - (2  \omega - \omega^2  \sigma^2)\frac{c_{\min}(T)}{|T|}\sigma_{\min}^2(\bcirc(\tA) )\right) \|\tA \tX^{(k-1)} - \tA\tX^\ddagger\|_F^2,
\end{align*}
where the last inequality follows from formula  \eqref{eqn:frobboundpart1} in Lemma~\ref{lem:boundsforFrobeniusnorm}. 
By utilizing the tower property of conditional expectation and the fact that the sampling distribution is the same in each iteration, we get equation \eqref{eq:trbags_residual_theorem}, and \eqref{eq:trbags_error_theorem} follows from Lemma~\ref{lem:residual_to_error}.
\end{proof}

\subsection{Proofs of main results for FacTRBGS and FacTRBAGS}\label{subsubsec:proofsoffacTRBGSandfacTRBAGS}
We now prove Theorems~\ref{thm:factrbgs} and~\ref{thm:factrbags}, which illustrate that Algorithms~\ref{alg:factrbgs} and~\ref{alg:factrbags} converge at least linearly in expectation to solutions of the problem~\eqref{eq:fact_regression}.  These proofs follow a straightforward script. They leverage Theorems~\ref{thm:trbgs} and~\ref{thm:trbags}, respectively, to establish linear convergence for the outer system. For the inner system, in Theorem~\ref{thm:factrbgs} we bound the residual using a Pythagorean identity before recursively deriving convergence constants. In Theorem~\ref{thm:factrbags} we follow a similar reasoning but without relying on a Pythagorean identity.

\begin{proof}[Proof of Theorem~\ref{thm:factrbgs}]
    Note that \eqref{eq:factrbgs_outer_residual_theorem} and \eqref{eq:factrbgs_outer_error_theorem} follow directly from Theorem~\ref{thm:trbgs} and the fact that the $\tZ^{(k)}$ iterates for the outer system are independent of the $\tX^{(k)}$ iterates for the inner system.

    Now, we have
    \begin{align*}
        \|\tV\tX^{(k)} - \tV\tX^\ddagger\|_F^2 &= \|\tV\tX^{(k-1)} - \tV\tX^\ddagger - \tV_{:\nu_k:}(\tV_{:\nu_k:}^*\tV_{:\nu_k:})^{-1}\tV_{:\nu_k:}^*(\tV\tX^{(k-1)} - \tZ^{(k)})\|_F^2
        \\&= \left\|(\tI - \tP_{\tV_{:\nu_k:}})(\tV\tX^{(k-1)} - \tV\tX^\ddagger) + \tP_{\tV_{:\nu:}}\left(\tZ^{(k)} - \tZ^\ddagger_{\mathcal{R}_l(\tV)}\right)\right\|_F^2
        \\&= \|(\tI - \tP_{\tV_{:\nu_k:}})(\tV\tX^{(k-1)} - \tV\tX^\ddagger)\|_F^2 + \left\|\tP_{\tV_{:\nu_k:}}\left(\tZ^{(k)} - \tZ^\ddagger_{\mathcal{R}_l(\tV)}\right)\right\|_F^2
        \\&= \|(\tI - \tP_{\tV_{:\nu_k:}})(\tV\tX^{(k-1)} - \tV\tX^\ddagger)\|_F^2 + \left\|\tP_{\tV_{:\nu_k:}}\left(\tZ^{(k)}_{\mathcal{R}_l(\tV)} - \tZ^\ddagger_{\mathcal{R}_l(\tV)}\right) + \tP_{\tV_{:\nu_k:}}\tZ^{(k)}_{\mathcal{R}_l(\tV)^\perp}\right\|_F^2
        \\&= \|(\tI - \tP_{\tV_{:\nu_k:}})(\tV\tX^{(k-1)} - \tV\tX^\ddagger)\|_F^2 + \left\|\tP_{\tV_{:\nu_k:}}\left(\tZ^{(k)}_{\mathcal{R}_l(\tV)} - \tZ^\ddagger_{\mathcal{R}_l(\tV)}\right)\right\|_F^2
        \\&\le \|(\tI - \tP_{\tV_{:\nu_k:}})(\tV\tX^{(k-1)} - \tV\tX^\ddagger)\|_F^2 + \left\|\tZ^{(k)}_{\mathcal{R}_l(\tV)} - \tZ^\ddagger_{\mathcal{R}_l(\tV)}\right\|_F^2
        \\&\le \|(\tI - \tP_{\tV_{:\nu_k:}})(\tV\tX^{(k-1)} - \tV\tX^\ddagger)\|_F^2 + \left\|\tZ^{(k)} - \tZ^\ddagger\right\|_F^2
    \end{align*}
    where the third equation follows from the definition of an orthogonal projector.  

    Taking expectation conditional on the first $k-1$ iterations and applying \eqref{eqn:frobboundpart2} from Lemma~\ref{lem:boundsforFrobeniusnorm}, we have
    \begin{align*}
        \E^{(k-1)}\| \tV\tX^{(k)} - \tV\tX^\ddagger \|^2      
  &\leq  (1- \sigma_{\min}( {\E_{T_\tV}^{(k-1)}[\text{bcirc}( \mathcal{P}_{\tV_{:\nu_k : }})]) }) \| {\tV\tX^{(k-1)}} - \tV\tX^\ddagger\|_F^2 + \E_{T_\tU}^{(k-1)} \|( {\tZ^{\ddagger}} - {\tZ^{(k)}}) \|_F^2
  \\&= \alpha_{\tV} \| {\tV\tX^{(k-1)}} - \tV\tX^\ddagger\|_F^2 + \E_{T_\tU}^{(k-1)} \|( {\tZ^{\ddagger}} - {\tZ^{(k)}}) \|_F^2.
    \end{align*}

    Recalling again that the iterations updating the $\tZ$ iterates are independent of the $\tX$ iterates, we have 
\[\mathbb{E}^{(k-1)}\| \tV\tX^{(k)} - \tV\tX^\ddagger \|^2 \leq \alpha_\tV\|\tV\tX^{(k-1)} - \tV\tX^\ddagger \|^2 + \kappa^2(\tU)\alpha_\tU {\|\tZ^{(k-1)} - \tZ^\ddagger\|^2}.\]
Let  $\alpha_{\max} := \max\{\alpha_\tU, \alpha_\tV\}$ and $\alpha_{\min} := \min\left\{ \frac{\alpha_{\tU}}{\alpha_{\tV}}, \frac{\alpha_{\tU}}{\alpha_{\tV}}\right\}$. When $\alpha_\tU \neq \alpha_{\tV}$, by recursion we get 
\begin{align*}
    \mathbb{E}\| \tV\tX^{(k)} - \tV\tX^\ddagger \|_F^2 &\le \alpha_\tV^k \|\tV\tX^{(0)} - \tV\tX^\ddagger\|_F^2 + \kappa^2(\tU)\left(\sum_{s = 1}^k \alpha_\tU^s \alpha_\tV^{k-s}\right) \|\tZ^{(0)} - \tZ^\ddagger\|_F^2
    \\&\le \alpha_\tV^k \|\tV\tX^\ddagger\|_F^2 + \kappa^2(\tU)\alpha_{\max}^k \sum_{s=1}^{k} \alpha^s_{\min}\|\tZ^\ddagger\|_F^2\\
    &\le \alpha_\tV^k \|\tV\tX^\ddagger\|_F^2 + \kappa^2(\tU)\frac{\alpha_{\max}^k \alpha_{\min}}{1 - \alpha_{\min}}\|\tZ^\ddagger\|_F^2.\end{align*}
When $\alpha_\tU = \alpha_\tV$, we get  
\begin{align*}
    \mathbb{E}\| \tV\tX^{(k)} - \tV\tX^\ddagger \|_F^2 &\le \alpha_\tV^k \|\tV\tX^{(0)} - \tV\tX^\ddagger\|_F^2 + \kappa^2(\tU)\left(\sum_{s = 1}^k \alpha_\tU^s \alpha_\tV^{k-s}\right) \|\tZ^{(0)} - \tZ^\ddagger\|_F^2
    \\&\le \alpha_\tV^k \|\tV\tX^\ddagger\|_F^2 + \kappa^2(\tU)k \alpha_{\max}^k \|\tZ^\ddagger\|_F^2.
\end{align*}

The final result \eqref{eq:factrbgs_inner_error_theorem} follows from Lemma~\ref{lem:residual_to_error}.
\end{proof}

\begin{proof}[Proof of Theorem~\ref{thm:factrbags}]
    The justification for the results in Theorem (\ref{thm:errorsboundsonfactrbags}) is essentially a combination of the proof of (\ref{thm:errorsboundsonfactrbgs}) and the proof of (\ref{thm:trbags}). 
    
    First, observe that \eqref{eq:factrbags_outer_residual_theorem} and \eqref{eq:factrbags_outer_error_theorem} follow directly from Theorem~\ref{thm:trbags} and the fact that the $\tZ^{(k)}$ iterates for the outer system are independent of the $\tX^{(k)}$ iterates for the inner system.

     Now, we have
    \begin{align*}
        \|\tV\tX^{(k)} - \tV\tX^\ddagger\|_F^2 
        &= \|\tV\tX^{(k-1)} - \tV\tX^\ddagger - \omega_2\tV_{:\nu_k:}\tV_{:\nu_k:}^*(\tV \tX^{(k - 1)} - \tZ^{(k)})\|_F^2 \\
        &= \|\tV\tX^{(k-1)} - \tV\tX^\ddagger - \omega_2\tV_{:\nu_k:}\tV_{:\nu_k:}^*(\tV \tX^{(k - 1)} - \tZ^{(k)})\|_F^2 \\
        &= \|(\tI - \omega_2\tV_{:\nu_k:}\tV_{:\nu_k:}^*)(\tV\tX^{(k-1)} - \tV\tX^\ddagger) + \omega_2\tV_{:\nu_k:}\tV_{:\nu_k:}^*(\tZ^{(k)} - \tV\tX^\ddagger) \|_F^2.
        \intertext{But $\tV\tX^{\dagger} = \tZ^{\dagger}_{\mathcal{R}_l(\tV)}$, so we have}
       \|\tV\tX^{(k)} - \tV\tX^\ddagger\|_F^2 &= \|(\tI - \omega_2\tV_{:\nu_k:}\tV_{:\nu_k:}^*)(\tV\tX^{(k-1)} - \tV\tX^\ddagger) 
        + \omega_2\tV_{:\nu_k:}\tV_{:\nu_k:}^*(\tZ^{(k)} - \tZ^{\dagger}_{\mathcal{R}_l(\tV)})\|_F^2 \\
         &= \|(\tI - \omega_2\tV_{:\nu_k:}\tV_{:\nu_k:}^*)(\tV\tX^{(k-1)} - \tV\tX^\ddagger) 
        + \omega_2\tV_{:\nu_k:}\tV_{:\nu_k:}^*(\tZ^{(k)} - \tZ^{\dagger})\|_F^2
        \end{align*}
        since $\tV^*\tZ^{\dagger}_{\mathcal{R}_l(\tV)^\perp}= 0$ by Lemma (\ref{lemma:pseudoinverse}). Then,
        \begin{align*}
            \|\tV\tX^{(k)} - \tV\tX^\ddagger\|_F^2 
            &\leq 2(\|(\tI - \omega_2\tV_{:\nu_k:}\tV_{:\nu_k:}^*)(\tV\tX^{(k-1)} - \tV\tX^\ddagger) \|_F^2 + 
            \| \omega_2\tV_{:\nu_k:}\tV_{:\nu_k:}^*(\tZ^{(k)} - \tZ^{\dagger})\|_F^2). 
        \end{align*}
        The second term of the previous inequality can be bounded using Lemma (\ref{lem:boundsforFrobeniusnorm}), as
        \begin{align*}
        \| \omega_2\tV_{:\nu_k:}\tV_{:\nu_k:}^*(\tZ^{(k)} - \tZ^{\dagger})\|_F^2
        &\leq \omega_2^2 \sigma^2_{\max}(\mathrm{bcirc}(\tV_{:\nu_k:})) \sigma^2_{\max}(\mathrm{bcirc}(\tV_{:\nu_k:}^*)) \| (\tZ^{(k)} - \tZ^{\dagger})\|_F^2, \\
        &\le \omega_2^2 \frac{\gamma_{\tV}}{2} \| (\tZ^{(k)} - \tZ^{\dagger})\|_F^2, \\
        \intertext{while the first term uses an argument similar to one found in the proof of Theorem~\ref{thm:trbags} and results in}
        \|(\tI - \omega_2\tV_{:\nu_k:}\tV_{:\nu_k:}^*)(\tV\tX^{(k-1)} - \tV\tX^\ddagger) \|_F^2
        &\leq \|\tV \tX^{(k-1)} - \tV\tX^\ddagger\|_F^2 - (2\omega_2 - \omega_2^2 \sigma_\tV^2) \|\tV_{: \nu_k :}^* (\tV \tX^{(k-1)} - \tV\tX^\ddagger)\|_F^2.
    \end{align*}
    Then,
    \begin{align*}
        \E^{(k-1)}\|\tV\tX^{(k)} - \tV\tX^\ddagger\|_F^2 
        &\leq 2\|\tV \tX^{(k-1)} - \tV\tX^\ddagger\|_F^2 - 2(2\omega_2 - \omega_2^2 \sigma_\tV^2) \E^{(k-1)}_{T_\tV}[\|\tV_{: \nu_k :}^* (\tV \tX^{(k-1)} - \tV\tX^\ddagger)\|_F^2] \\
        &\quad\quad\quad\quad\quad\quad\quad\quad\quad + \omega_2^2 \gamma_{\tV} \E^{(k-1)}_{T_\tU}[\| \tZ^{(k)} - \tZ^{\dagger})\|_F^2].
    \end{align*}
    Using
    \[
        \E^{(k-1)}_{T_\tV}[\|\tV_{: \nu_k :}^* (\tV \tX^{(k-1)} - \tV\tX^\ddagger)\|_F^2]
        \geq \frac{c_{\min}(T_\tV)}{|T_\tV|} \sigma^2_{\min}(\mathrm{bcirc}(\tV)) \|\tV \tX^{(k-1)} - \tV\tX^\ddagger)\|_F^2,
    \]
    we get
    \begin{align*}
        \E^{(k-1)}\|\tV\tX^{(k)} - \tV\tX^\ddagger\|_F^2 
        & \leq 2\left(1 - (2\omega_2 - \omega_2^2 \sigma_\tV^2)\frac{c_{\min}(T_\tV)}{|T_\tV|} \sigma^2_{\min}(\mathrm{bcirc}(\tV))\right) \| \tV \tX^{(k-1)} - \tV\tX^\ddagger)\|_F^2 \\
        &\quad\quad\quad\quad\quad\quad\quad\quad\quad+ \omega_2^2 \gamma_{\tV} \E^{(k-1)}_{T_\tU}[\| \tZ^{(k)} - \tZ^{\dagger})\|_F^2] \\
        &\le \beta_\tV \| \tV \tX^{(k-1)} - \tV\tX^\ddagger)\|_F^2
     + \omega_2^2 \gamma_{\tV} \kappa^2(\tU) \beta_\tU \| \tZ^{(k-1)} - \tZ^{\dagger}\|_F^2.
    \end{align*}
    Letting $\beta_{\max}:= \max\{\beta_\tU, \beta_\tV\}$ and $\beta_{\min}:= \min\left\{ \frac{\beta_{\tU}}{\beta_{\tV}}, \frac{\beta_{\tU}}{\beta_{\tV}}\right\}$ and using the final recursive argument in the proof of Theorem (\ref{thm:errorsboundsonfactrbgs}) yields \eqref{eq:factrbags_inner_residual_theorem} and, from there, Lemma (\ref{lem:residual_to_error}) is applied to obtain~\eqref{eq:factrabgs_inner_error_theorem}.
\end{proof}

\section{Numerical Experiments}

In this section, we provide numerical experiments that illustrate our theoretical convergence results for
the methods proposed, TRBGS, TRBAGS, FacTRBGS, and FacTRBAGS. We measure the convergence using the relative error and residual errors, respectively
\[
    \frac{\|\tX^{(t)} - \tX^\ddagger\|_F}{\| \tX^\ddagger \|}, \text{ and } \|\tA \tX^{(t)} - \tA\tX^\ddagger\|_F.
\]
Additionally, we explore the effect of different types of block sampling. Finally, we apply of these methods to video and image deblurring and examine their performance.

\subsection{Synthetic Data}\label{subsec:synthetic_experiments}

For each experiment, we generate a synthetic tensor linear system that is either consistent or inconsistent and we vary the tensors' sizes to produce over-determined and under-determined systems. \\ 

\textbf{Consistent system.} To test the TRBGS algorithm~\ref{alg:trbgs} and TRBAGS algorithm~\ref{alg:trbags}, we construct tensors $\tA \in \mathbb{R}^{m \times n \times p}$ with each entry sampled i.i.d. from the standard normal distribution. We generate a solution to our consistent system $\tX_{\text{gen}} \in \mathbb{R}^{n \times l \times p}$ with each entry sampled i.i.d.\ from the standard normal distribution. The measurement tensor for our consistent system $\tB \in \mathbb{R}^{m \times l \times p}$ is then calculated as the product $\tB= \tA \tX_{\text{gen}}$. Our theoretical convergence result, Theorems~\ref{thm:trbgs} and~\ref{thm:trbags} state that Algorithms~\ref{alg:trbgs} and~\ref{alg:trbags} will compute the least-norm solution $\tX^\ddagger = \mathrm{argmin}_{\tX} \Vert \tA \tX - \tB \Vert$ whether the system is consistent or inconsistent. Our numerical experiments illustrated in Figures~\ref{fig:TRBGS_overdetermined_consistent},~\ref{fig:TRBGS_underdetermined_consistent},~\ref{fig:TRBAGS_overdetermined_consistent},~\ref{fig:TRBAGS_underdetermined_consistent}, corroborate this result. \\

\textbf{Inconsistent system.} For this case, we generate all the tensors except for $\tB$ as done in the consistent system case. For TRBGS and TRBAGS, we construct $\tB = \tA \tX_{\text{gen}} + 10^{-4}\tB^\perp$, where $\tB^\perp$ to satisfies $\langle \tB^{\perp}, \tA \tX_{\text{gen}} \rangle = 0$, by generating $\tilde{\tB} \in \mathbb{R}^{m \times l \times p}$ with each entry sampled i.i.d.\ from the standard normal distribution, $\widetilde{\tX} = \tA \widetilde{\tB}$, and $\tB^\perp = \widetilde{\tB} - \tA \widetilde{\tX}.$ According to our theorems~\ref{thm:trbgs} and~\ref{thm:trbags}, Algorithms~\ref{alg:trbgs} and~\ref{alg:trbags} with find the least square norm solution $\tX^\ddagger$. Figures~\ref{fig:TRBGS_overdetermined_inconsistent} and~\ref{fig:TRBAGS_overdetermined_inconsistent} show that our theorems hold.

To test FacTRBGS and FacTRBAGS, i.e., Algorithms~\ref{alg:factrbgs} and~\ref{alg:factrbags}, we only study the results for inconsistent systems. We construct the synthetic tensor linear systems using the process is similar to the one described above. We first generate tensors $\tU \in \mathbb{R}^{m \times m_1 \times p}$ and $\tV \in \mathbb{R}^{m_1 \times n \times p}$ with each entry sampled i.i.d.\ from the standard normal distribution. Then, we compute a solution to our inconsistent system $\tX_{\text{gen}} \in \mathbb{R}^{n \times l \times p}$ with each entry sampled i.i.d. from the standard normal distribution and the measurement tensor is given by $\tA = \tU \tV \tX_{\text{gen}} + 10^{-4}\tY^\perp$ where we generate $\tA^\perp$ to satisfy $\langle \tA^{\perp}, \tU \tV \tX_{\text{gen}} \rangle = 0$. Theorems~\ref{thm:errorsboundsonfactrbgs} and~\ref{thm:errorsboundsonfactrbags} state that when the outer least-squares problem  $\|\tU \tZ - \tY\|_F^2$ has a unique minimizer $\tZ^\ddagger$ and the inner system $\tV \tX = \tZ^\ddagger$ is consistent, then FacTRBGS and FacTRBAGS will converge to the system of minimal norm, $\tX^\ddagger = \tV^\dagger \tZ^\ddagger$. Our numerical experiments support this result, as seen in Figures~\ref{fig:factorized_Incons_U_over_V_under_A_under} through~\ref{fig:factorized_Incons_U_over_V_over_A_over}.

\subsubsection{TRBGS and TRBAGS Suites}\label{subsubsec:TRBGSandTRBAGSSuites} 

We examine the performance of TRBGS (Algorithm~\ref{alg:trbgs}) and TRBAGS (Algorithm~\ref{alg:trbags}) under three different considerations: (i) whether $\tA$ is over-determined or under-determined, (ii) whether the system is consistent or inconsistent, and (iii) different sampling block sizes. The only case we don't consider is the under-determined-inconsistent case. For the TRBGS algorithm, the convergence speed increases significantly as the block size increases. For TRBAGS, although we also achieve convergence, the block size does not significantly affect the speed of convergence in the long run. Although this outcome was unexpected, we suspect that, in the long run, the averaging process behaves similarly to performing individual tensor Gauss-Seidel steps. (However, in Appendix~\ref{app:Effect of Block Sizes}, we show that for TRBAGS, larger block sizes matter when the number of iterations is small and when the tensors are large.) Note that, as expected, the residual error in the over-determined system cases, for both consistent and inconsistent systems, is a scalar times the absolute error, and the iterations converge to the least-norm solution. However, in the undetermined consistent system case, the relative error does not converge while the residual error converges. 

\begin{figure}
    \includegraphics[width=0.475\textwidth]{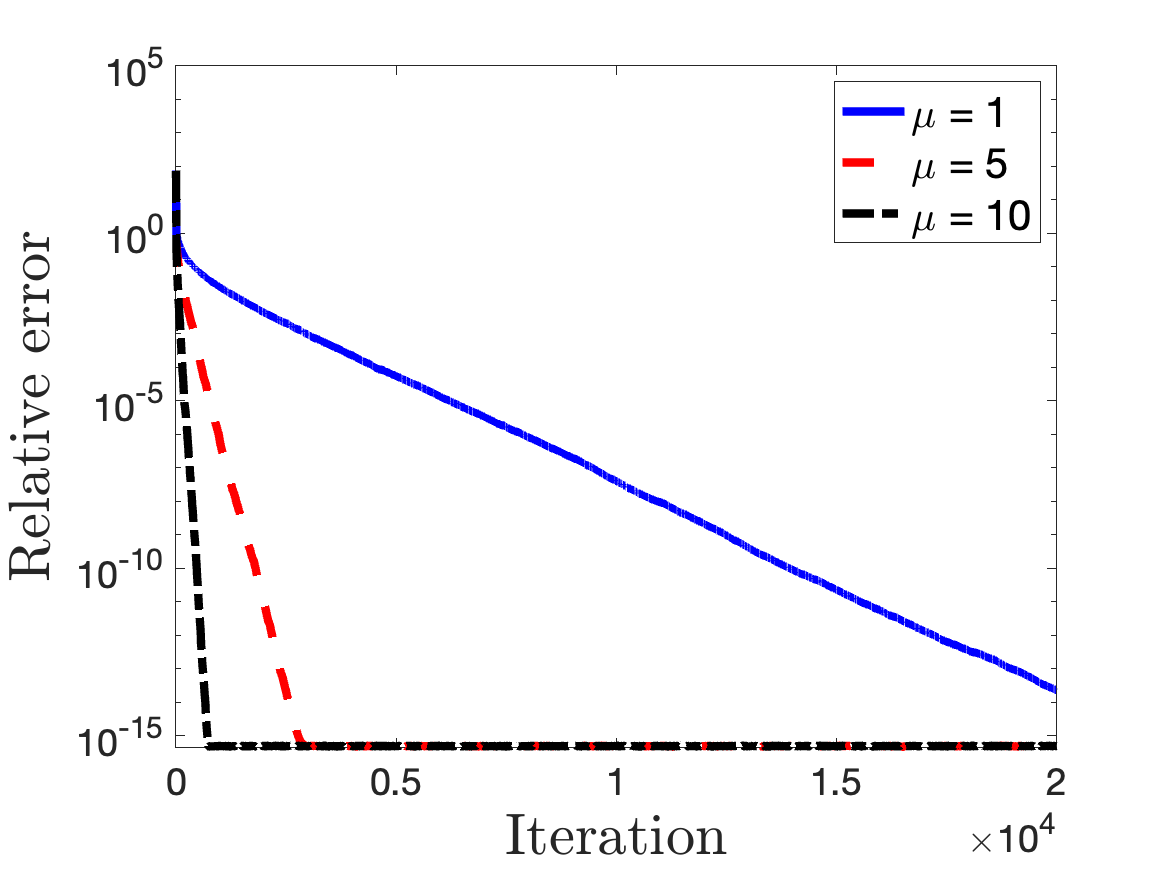}
    \hspace{.25cm}
    \includegraphics[width=0.475\textwidth]
    {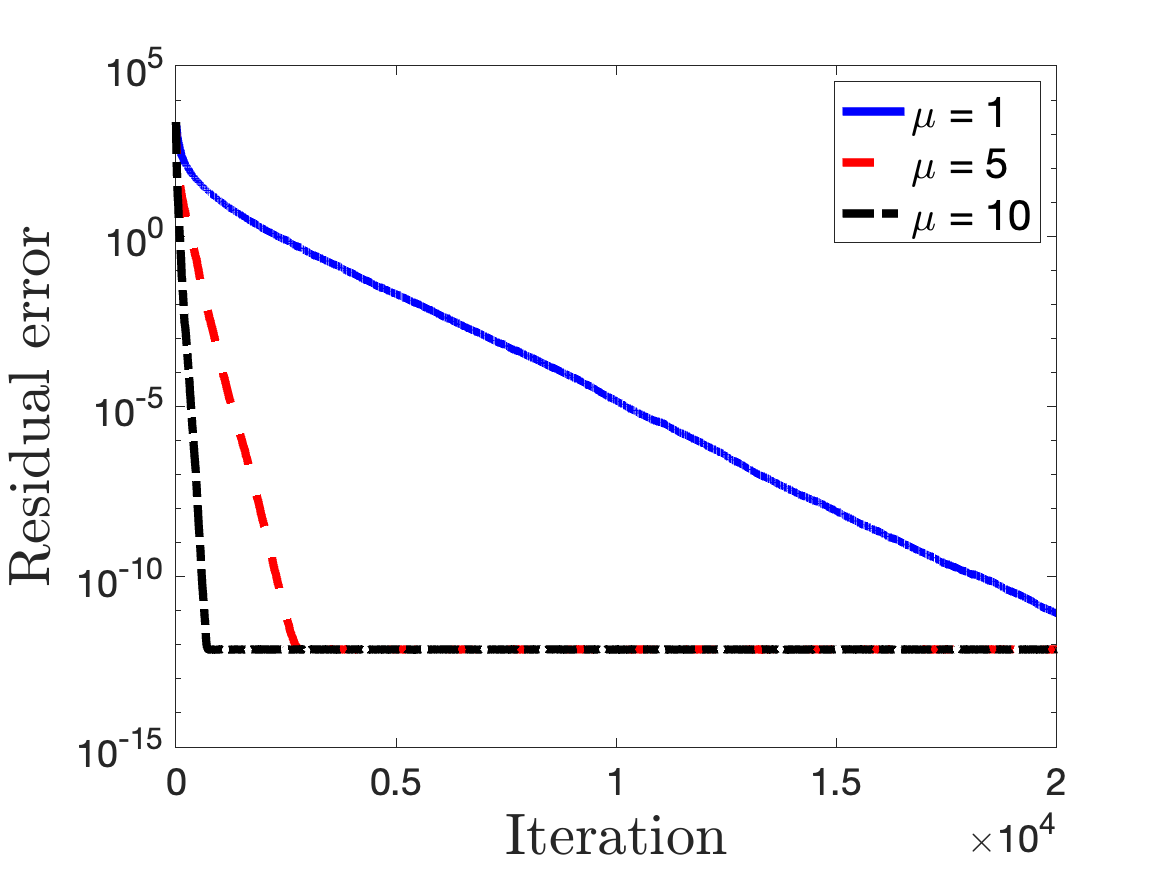}
    \caption{Relative error $\|\tX^{(t)} - \tX^\ddagger\|_F/\|\tX^\ddagger\|_F$ and residual error $\|\tA \tX^{(t)} - \tA\tX^\ddagger\|_F$ versus iteration $t$ of TRBGS on  a consistent linear system when $\tA$ is over-determined. We consider sampling block sizes $|\mu| \in \{1, 5, 10\}$ in each case.}
    \label{fig:TRBGS_overdetermined_consistent}
    \includegraphics[width=0.475\textwidth]{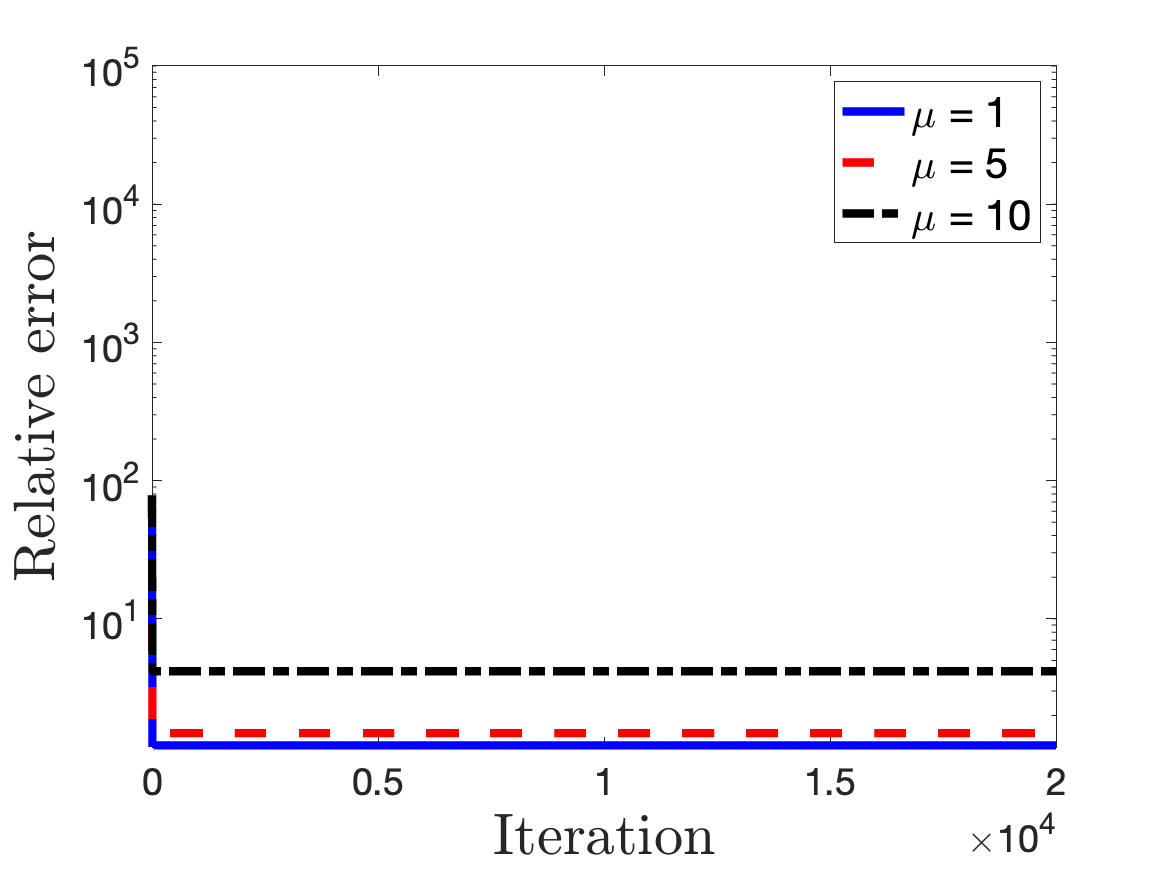}
    \hspace{.25cm}
    \includegraphics[width=0.475\textwidth]{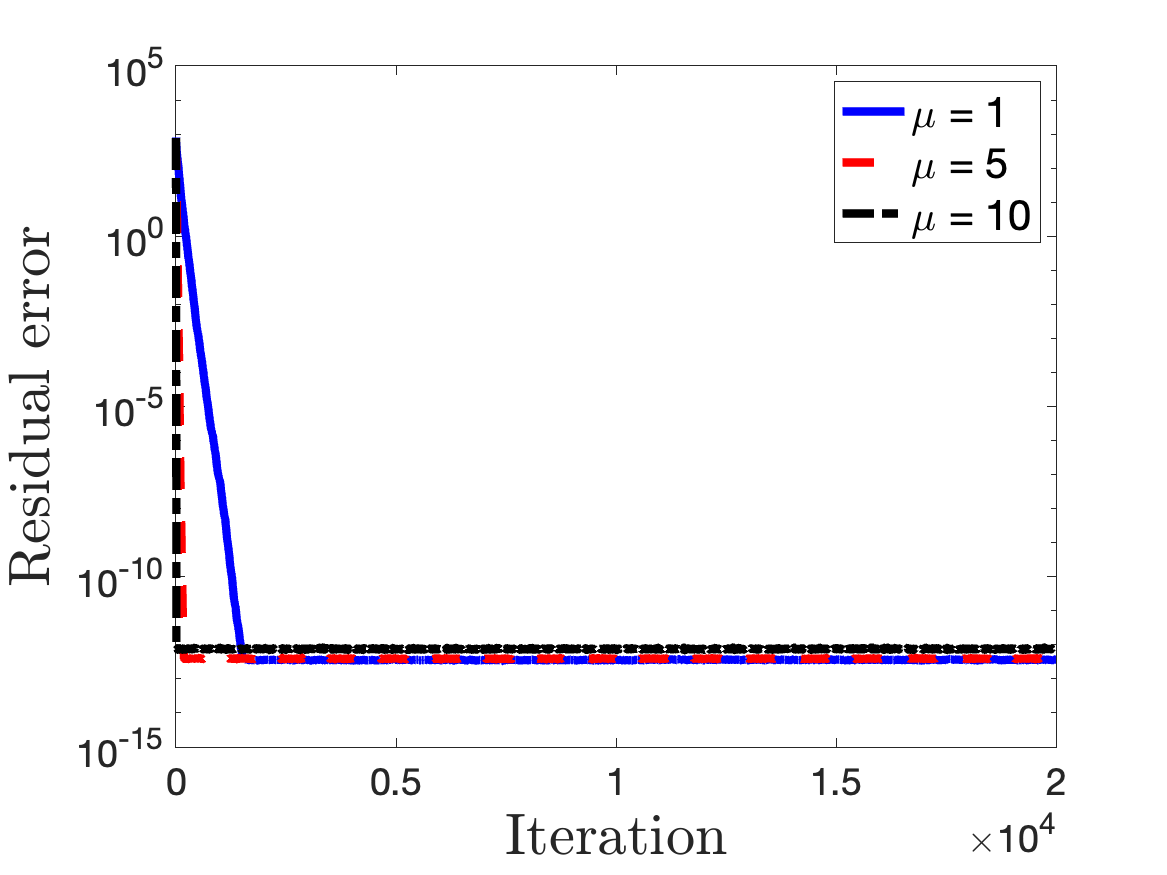}
    \caption{Relative error $\|\tX^{(t)} - \tX^\ddagger\|_F/\|\tX^\ddagger\|_F$ and residual error $\|\tA \tX^{(t)} - \tA\tX^\ddagger\|_F$ versus iteration $t$ of TRBGS on a consistent linear system when $\tA$ is under-determined. We consider sampling block sizes $|\mu| \in \{1, 5, 10\}$ in each case.}
    \label{fig:TRBGS_underdetermined_consistent}
    \includegraphics[width=0.475\textwidth]{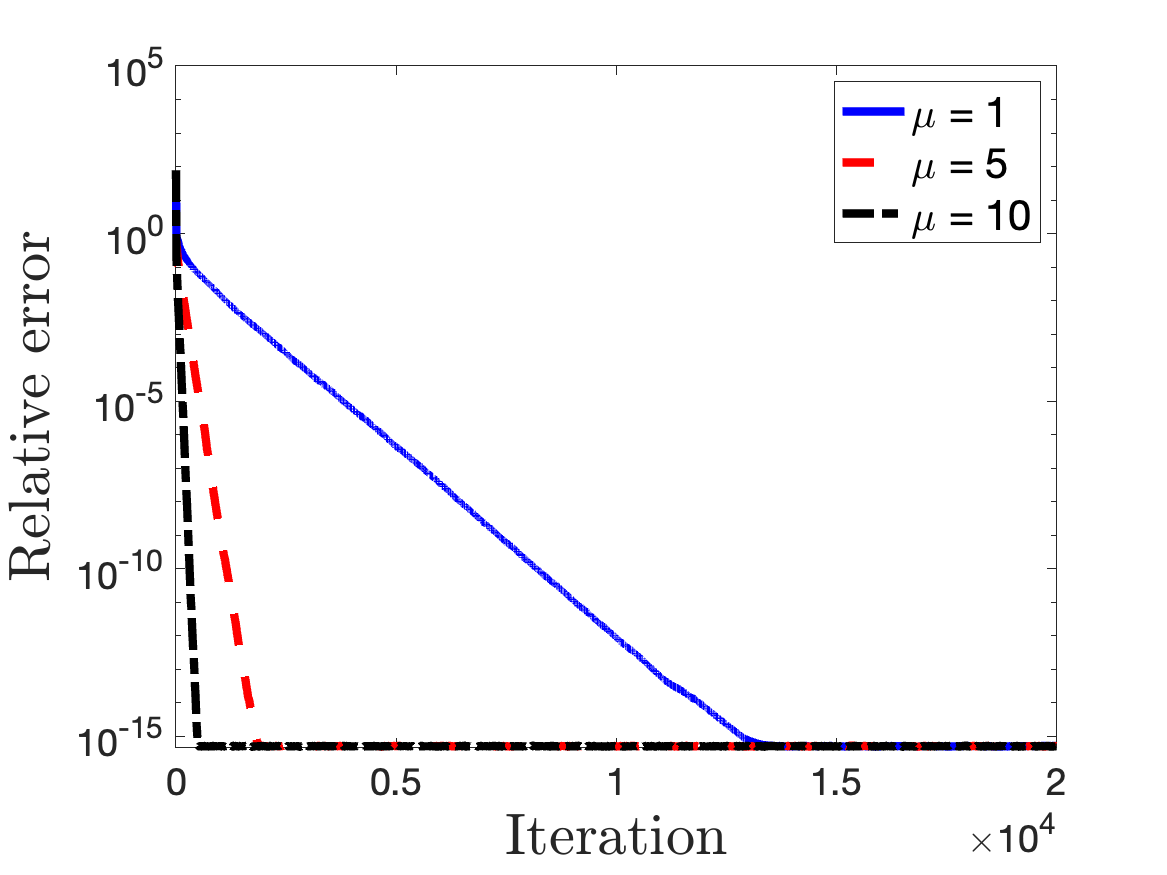}
    \hspace{.25cm}
    \includegraphics[width=0.475\textwidth]
    {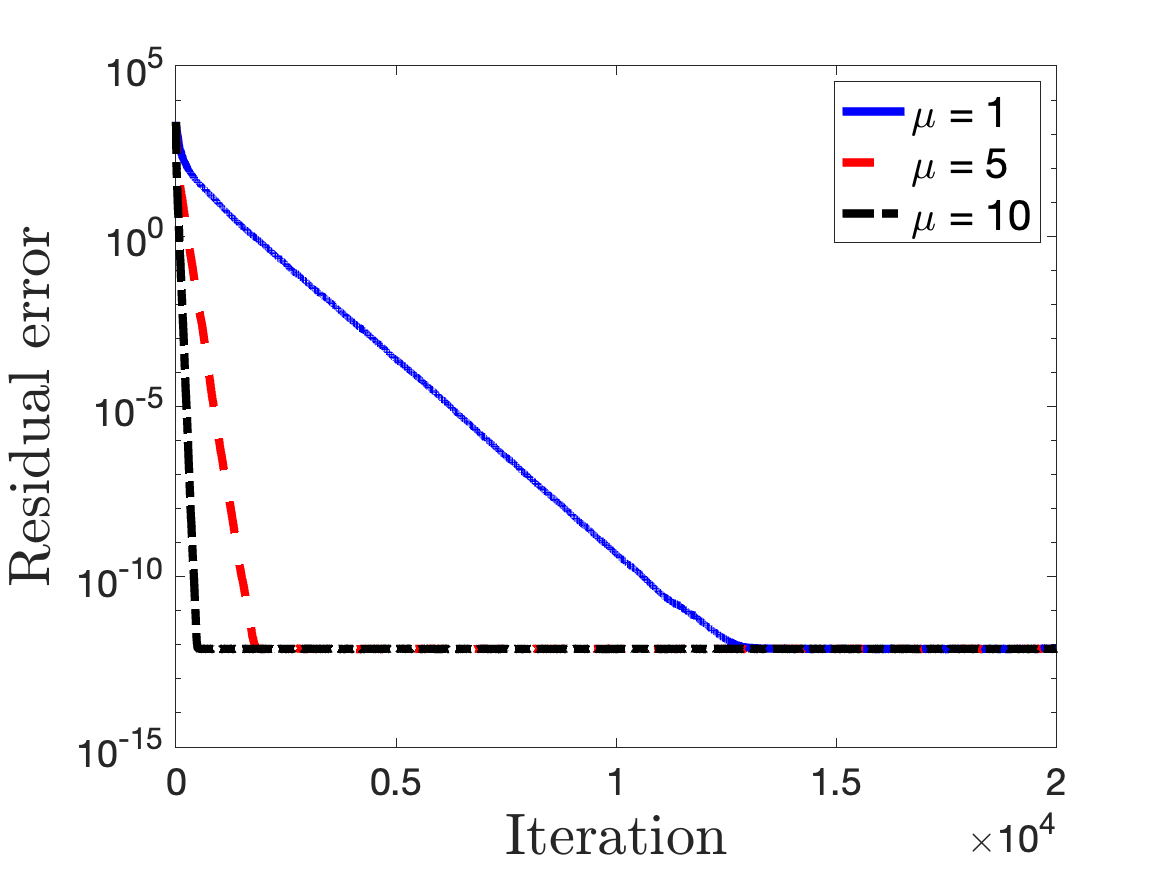}
    
    \caption{Relative error $\|\tX^{(t)} - \tX^\ddagger\|_F/\|\tX^\ddagger\|_F$ and residual error $\|\tA \tX^{(t)} - \tA\tX^\ddagger\|_F$ versus iteration $t$ of TRBGS on an inconsistent linear system when $\tA$ is over-determined. We consider sampling block sizes $|\mu| \in \{1, 5, 10\}$ in each case.}
    \label{fig:TRBGS_overdetermined_inconsistent}
\end{figure}

\begin{figure}
    \includegraphics[width=0.475\textwidth]{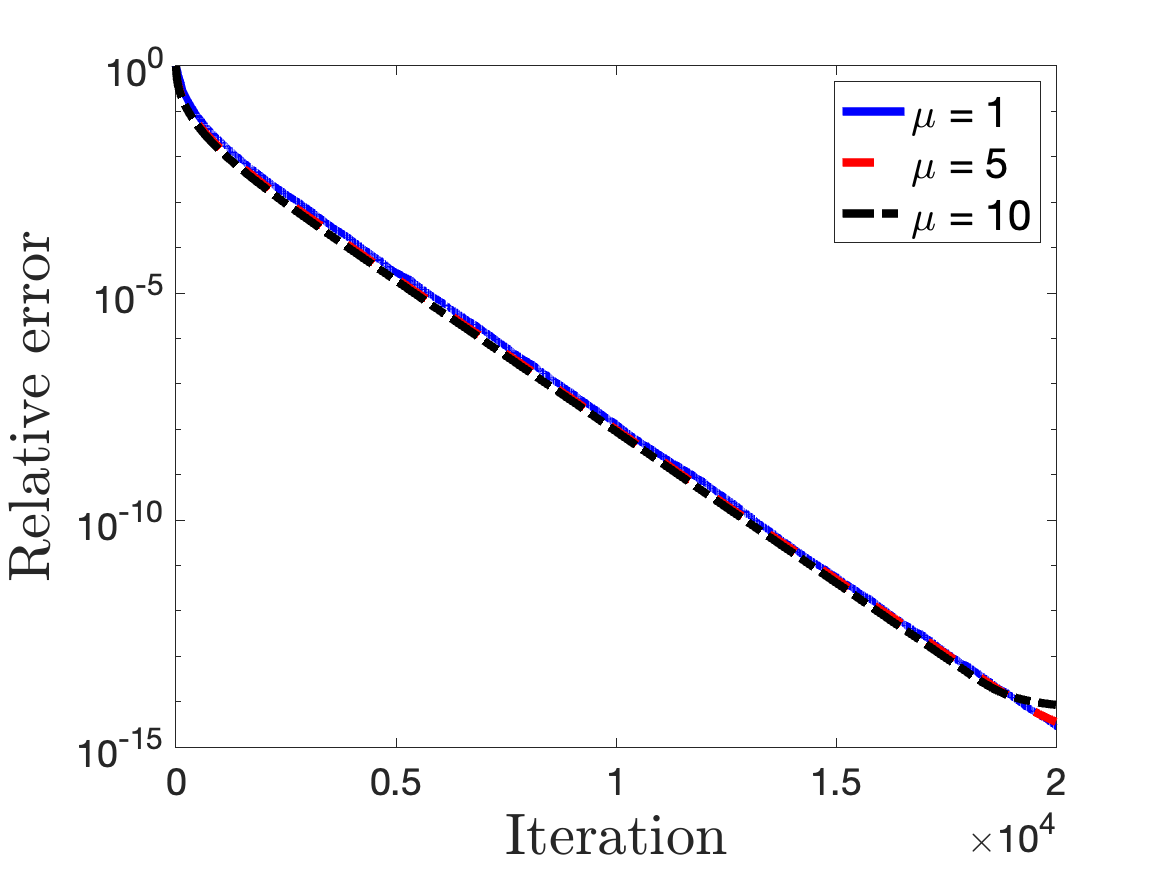}
    \hspace{.25cm}
    \includegraphics[width=0.475\textwidth]
    {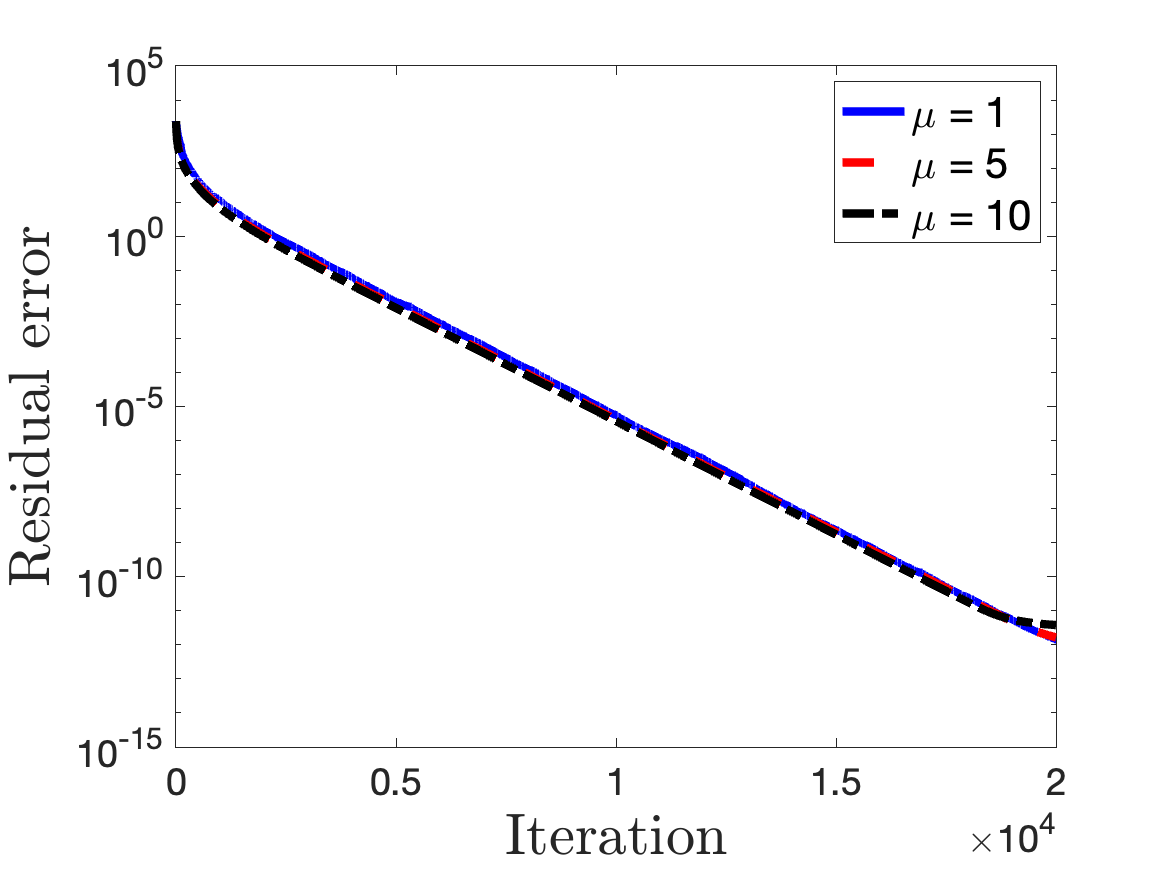}    
    \caption{Relative error $\|\tX^{(t)} - \tX^\ddagger\|_F/\|\tX^\ddagger\|_F$ and residual error $\|\tA \tX^{(t)} - \tA\tX^\ddagger\|_F$ versus iteration $t$ of TRBAGS on  a consistent linear system when $\tA$ is over-determined. We consider sampling block sizes $|\mu| \in \{1, 5, 10\}$ in each case.}
    \label{fig:TRBAGS_overdetermined_consistent}
    \includegraphics[width=0.475\textwidth]{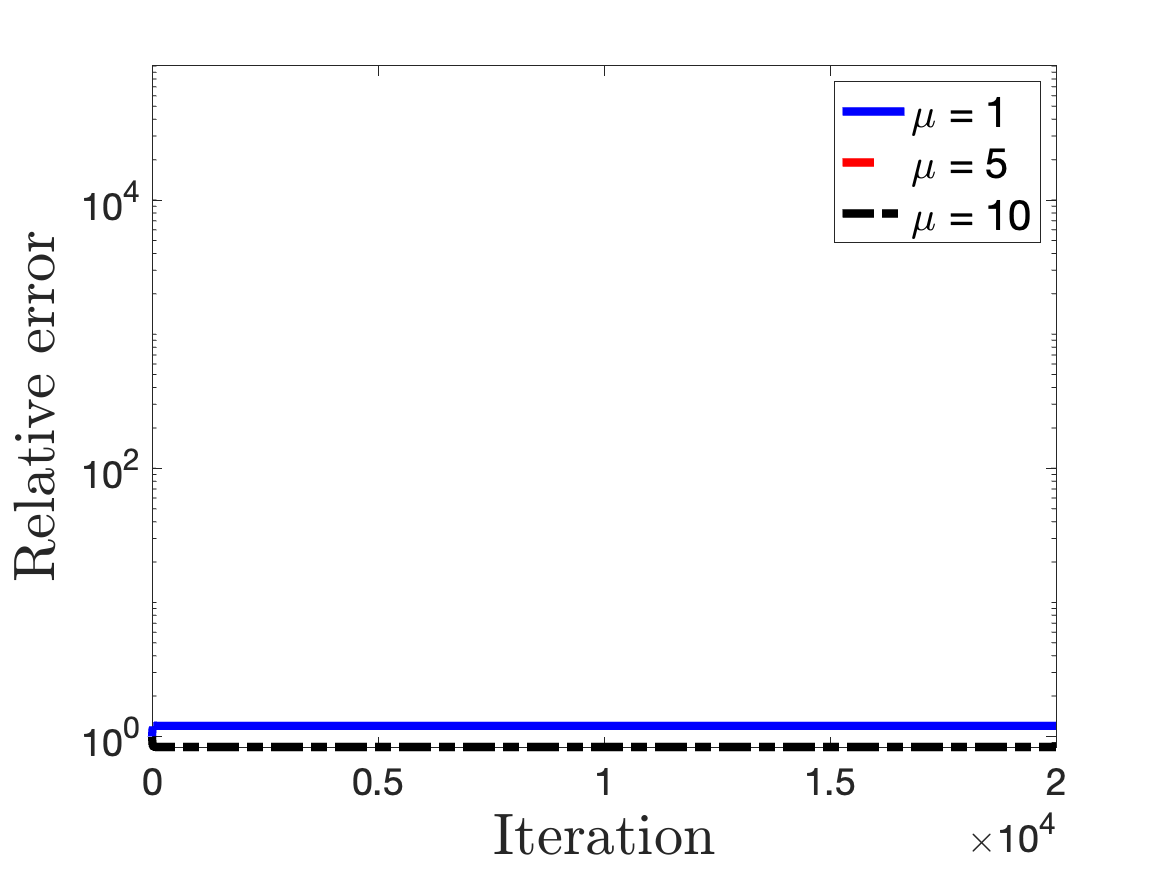}
    \hspace{.25cm}
    \includegraphics[width=0.475\textwidth]{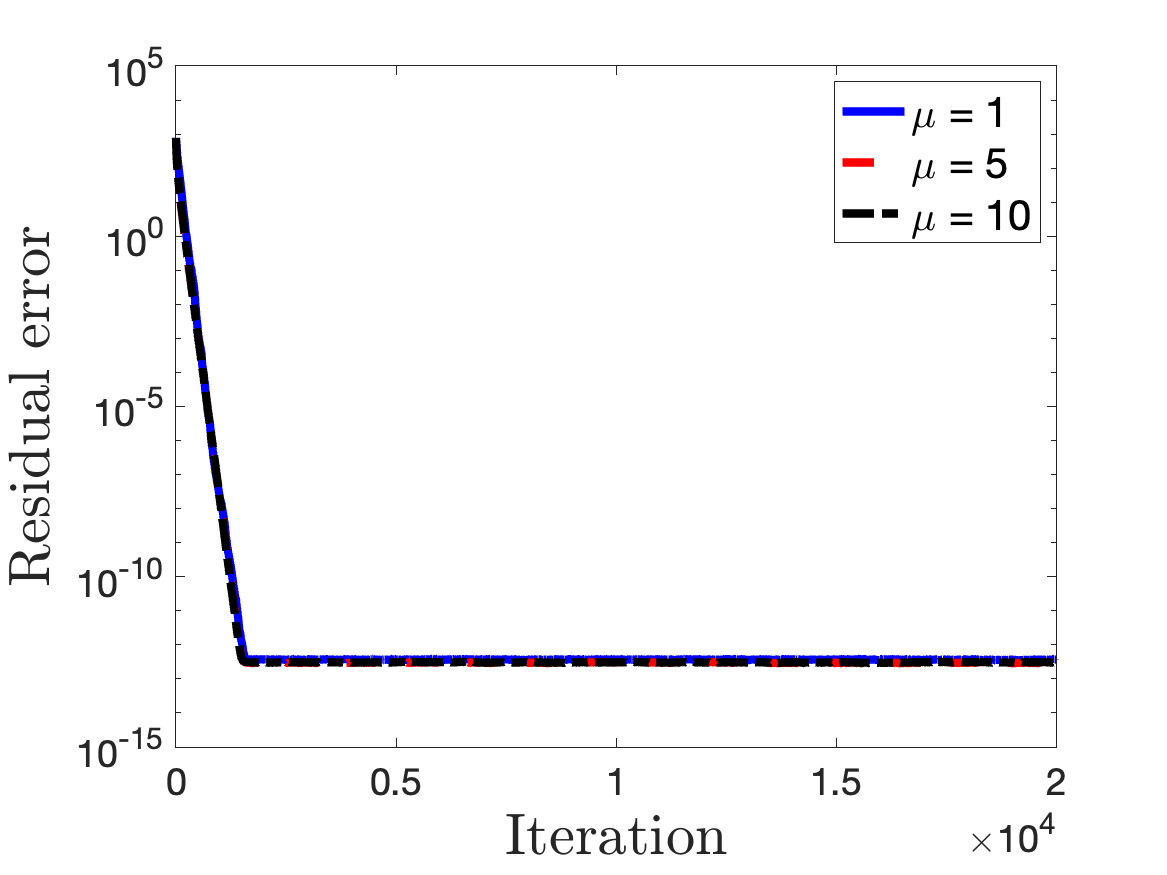}
    \caption{Relative error $\|\tX^{(t)} - \tX^\ddagger\|_F/\|\tX^\ddagger\|_F$ and residual error $\|\tA \tX^{(t)} - \tA\tX^\ddagger\|_F$ versus iteration $t$ of TRBAGS on a consistent linear system when $\tA$ is under-determined. We consider sampling block sizes $|\mu| \in \{1, 5, 10\}$ in each case.}
    \label{fig:TRBAGS_underdetermined_consistent}
    \includegraphics[width=0.475\textwidth]{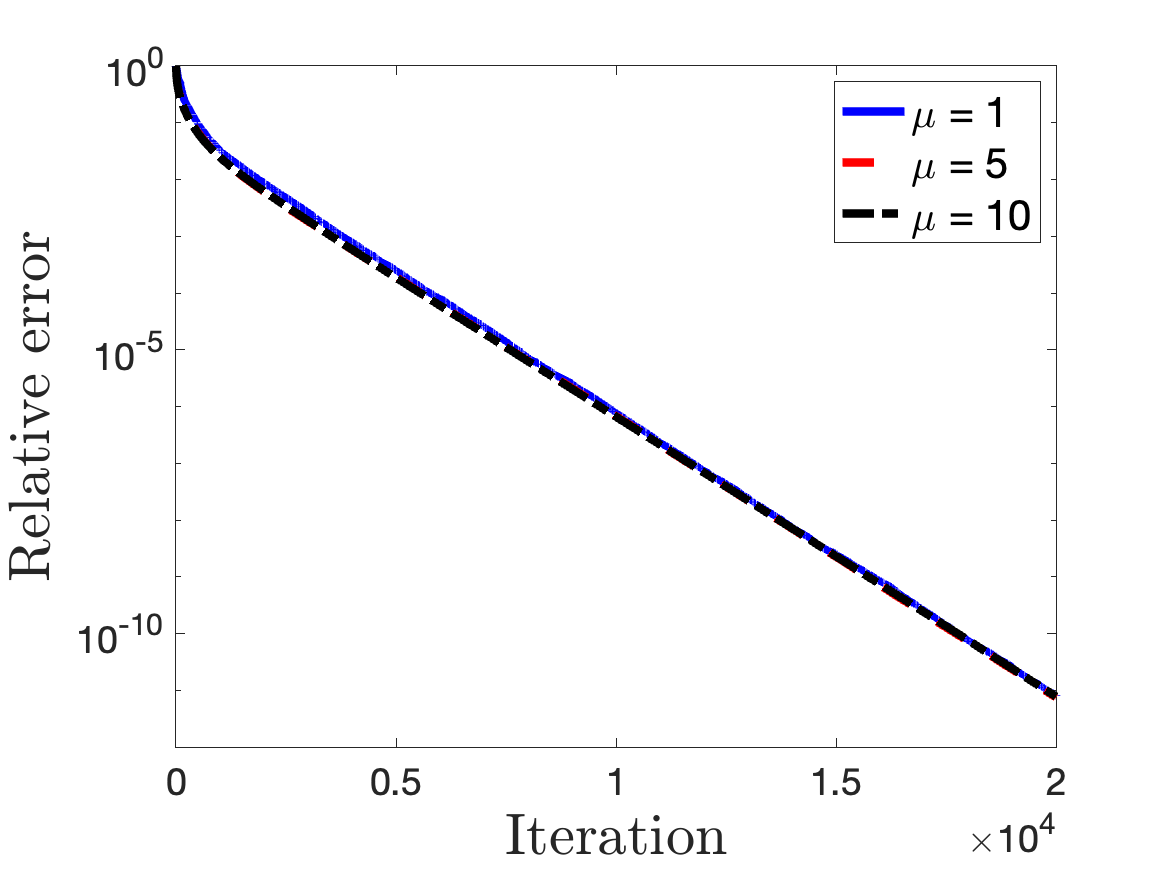}
    \hspace{.25cm}
    \includegraphics[width=0.475\textwidth]
    {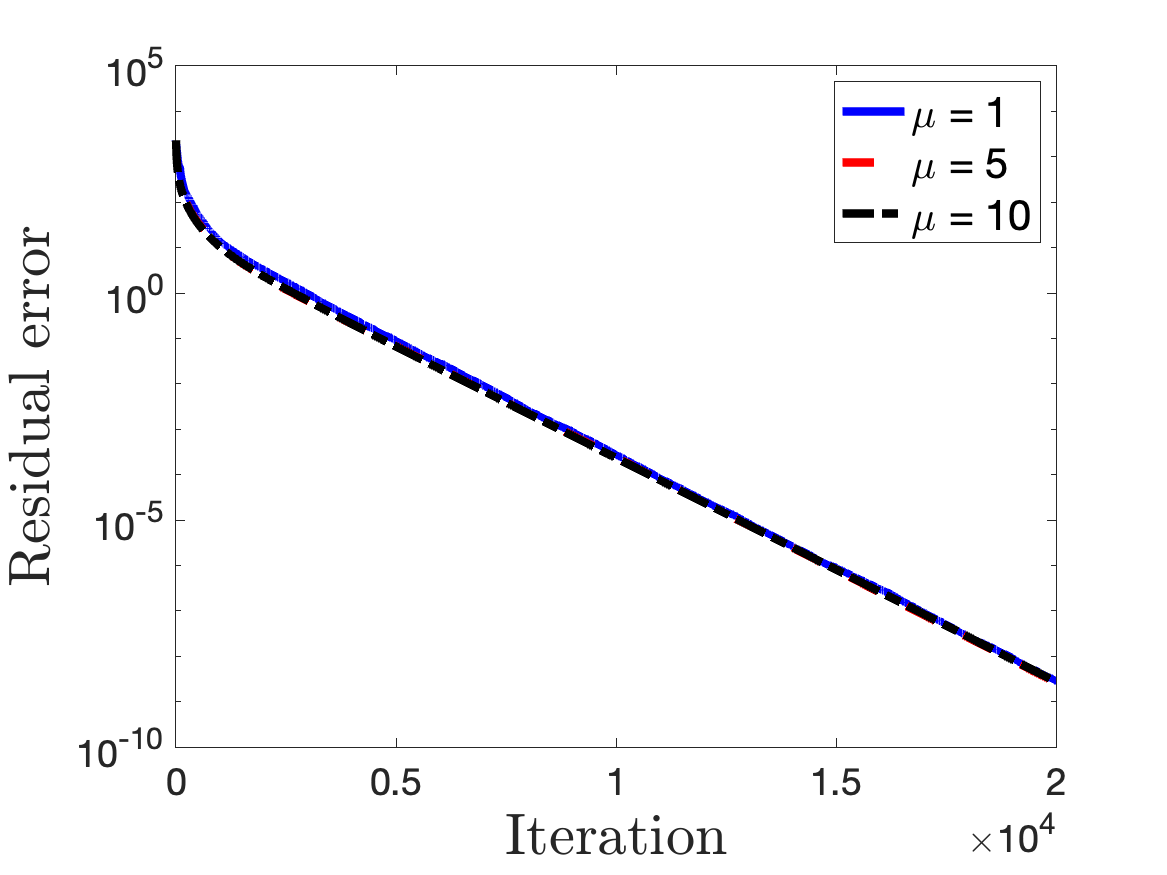}
    \caption{Relative error $\|\tX^{(t)} - \tX^\ddagger\|_F/\|\tX^\ddagger\|_F$ and residual error $\|\tA \tX^{(t)} - \tA\tX^\ddagger\|_F$ versus iteration $t$ of TRBAGS on an inconsistent linear system when $\tA$ is over-determined. We consider sampling block sizes $|\mu| \in \{1, 5, 10\}$ in each case.}
    \label{fig:TRBAGS_overdetermined_inconsistent}
\end{figure}

\subsubsection{Comparison of TRBGS and TRBAGS}

In this section, we compare the performance TRBGS and TRBAGS when the system is over-determined and consistent, over-determined and inconsistent, and under-determined and consistent. Based on the previous section, we use the block size constant at $\mu = 5$ as it yielded visible convergence in a reasonable amount of iterations throughout. Figures~\ref{fig:TRBGS_vs_TRBAGS_overdetermined_consistent},~\ref{fig:TRBGS_vs_TRBAGS_underdetermined_consistent}, and~\ref{fig:TRBGS_vs_TRBAGS_overdetermined_inconsistent} illustrates our results. In most cases when convergence is expected, both methods converge as stated by our theorem and as demonstrated in the previous section. However, for the chosen step-size, TRBGS converges significantly faster than TRBAGS, which we can expect since Algorithm~\ref{alg:trbgs} computes the pseudoinverse. However, if computational costs or memory are an issue, TRBAGS--perhaps with a more precisely defined $\omega$--could be an effective way to obtain our results while avoiding the computation of the pseudoinverse. 

\begin{figure}
    \includegraphics[width=0.475\textwidth]{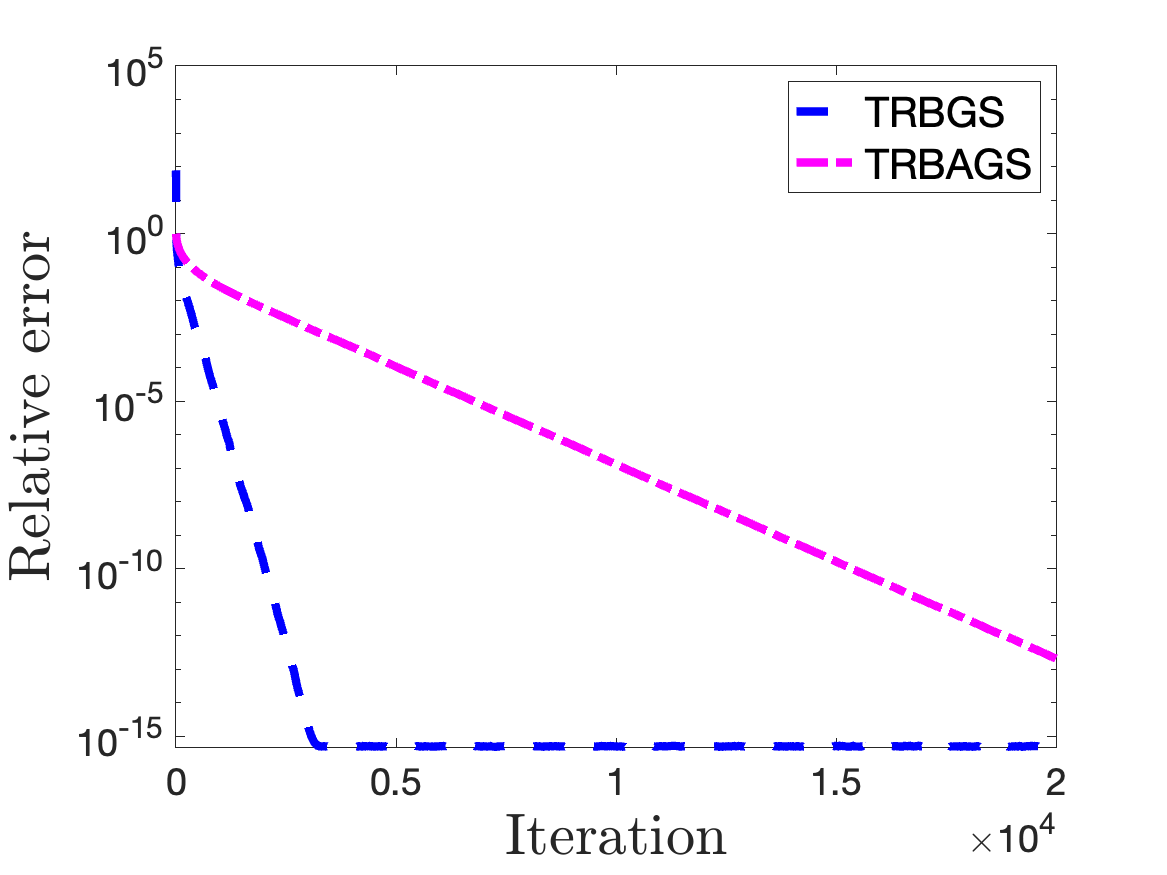}\hspace{.25cm}
    \includegraphics[width=0.475\textwidth]
    {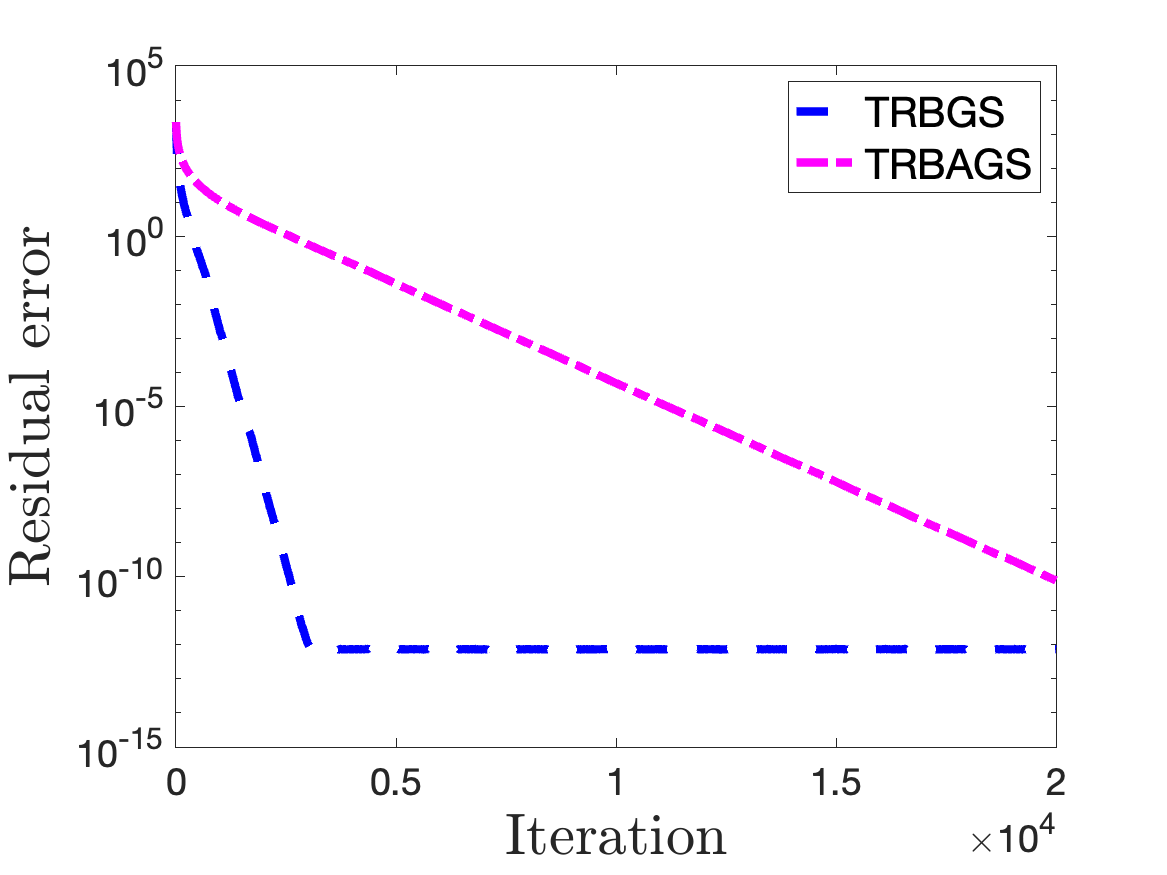}

    \caption{Relative error $\|\tX^{(t)} - \tX^\ddagger\|_F/\|\tX^\ddagger\|_F$ and residual error $\|\tA \tX^{(t)} - \tA\tX^\ddagger\|_F$ versus iteration $t$ for TRBGS and TRBAGS on  a consistent linear system when $\tA$ is over-determined. The block size is $|\mu| = 5$.}
    \label{fig:TRBGS_vs_TRBAGS_overdetermined_consistent}
     \includegraphics[width=0.475\textwidth]{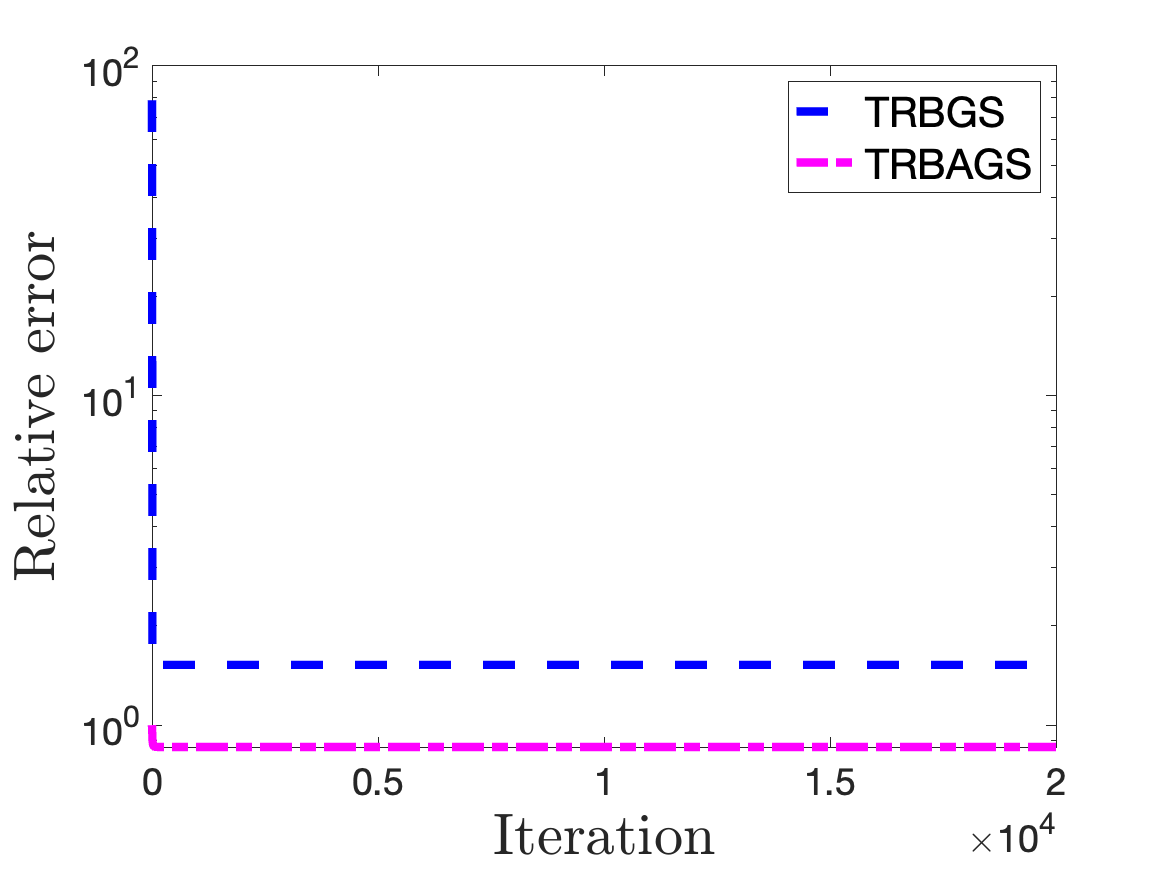}\hspace{.25cm}
    \includegraphics[width=0.475\textwidth]
    {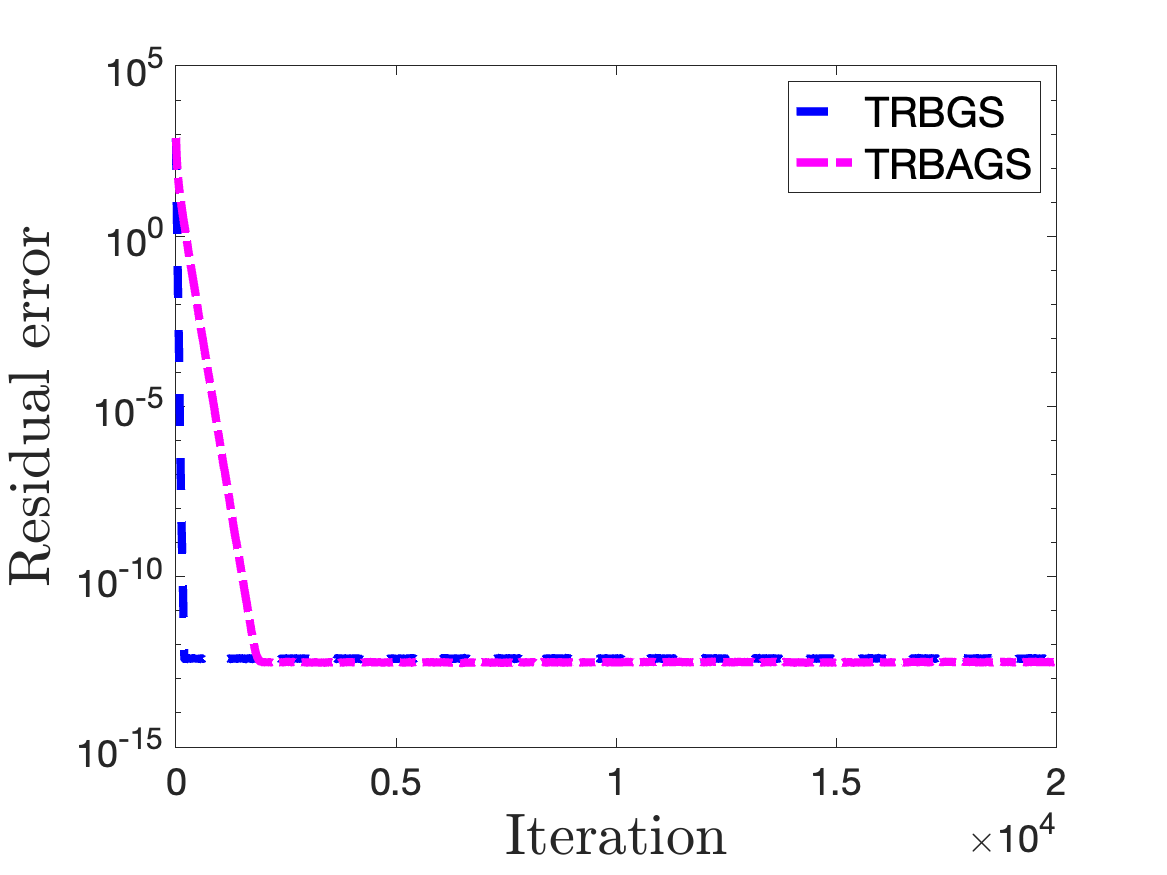}
    
    \caption{Relative error $\|\tX^{(t)} - \tX^\ddagger\|_F/\|\tX^\ddagger\|_F$ and residual error $\|\tA \tX^{(t)} - \tA\tX^\ddagger\|_F$ versus iteration $t$ for TRBGS and TRBAGS on  a consistent linear system when $\tA$ is under-determined. The block size is $|\mu| = 5$.}
    \label{fig:TRBGS_vs_TRBAGS_underdetermined_consistent}
    \includegraphics[width=0.475\textwidth]{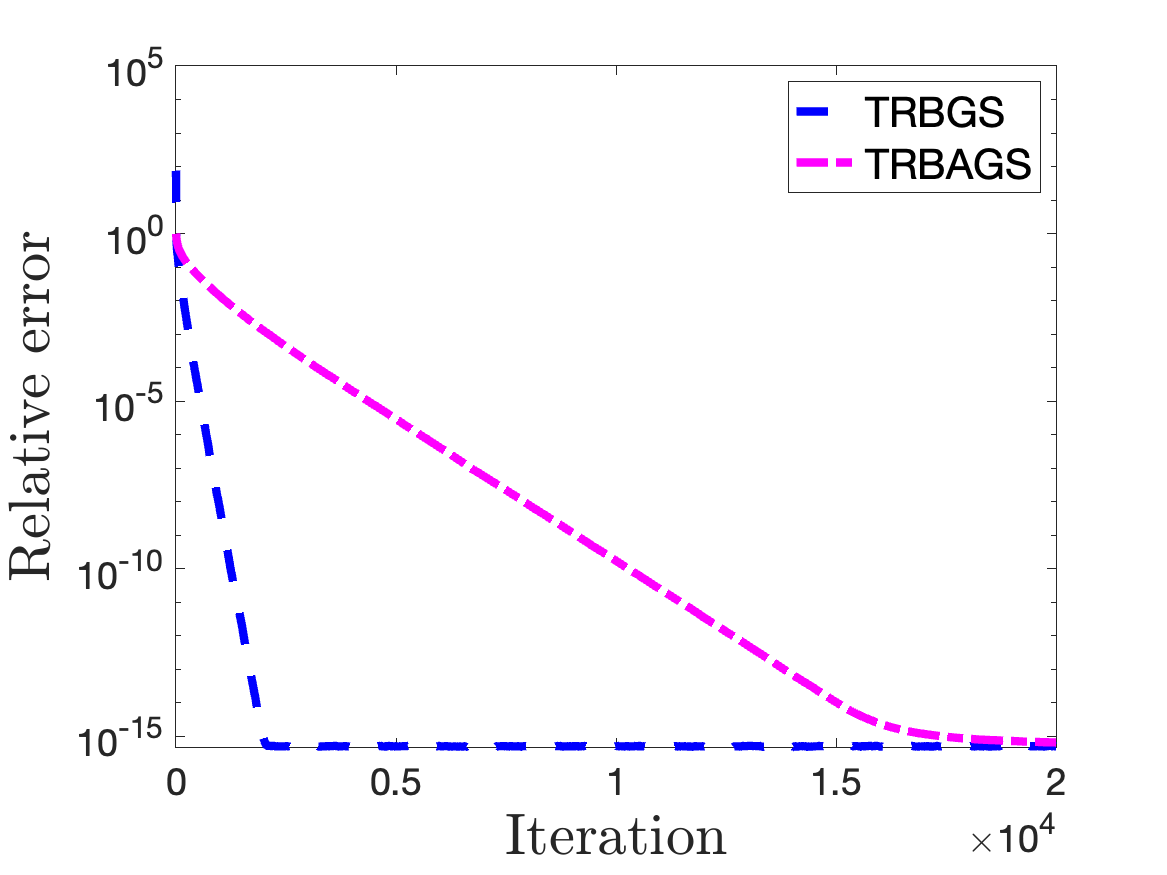}\hspace{.25cm}
    \includegraphics[width=0.475\textwidth]{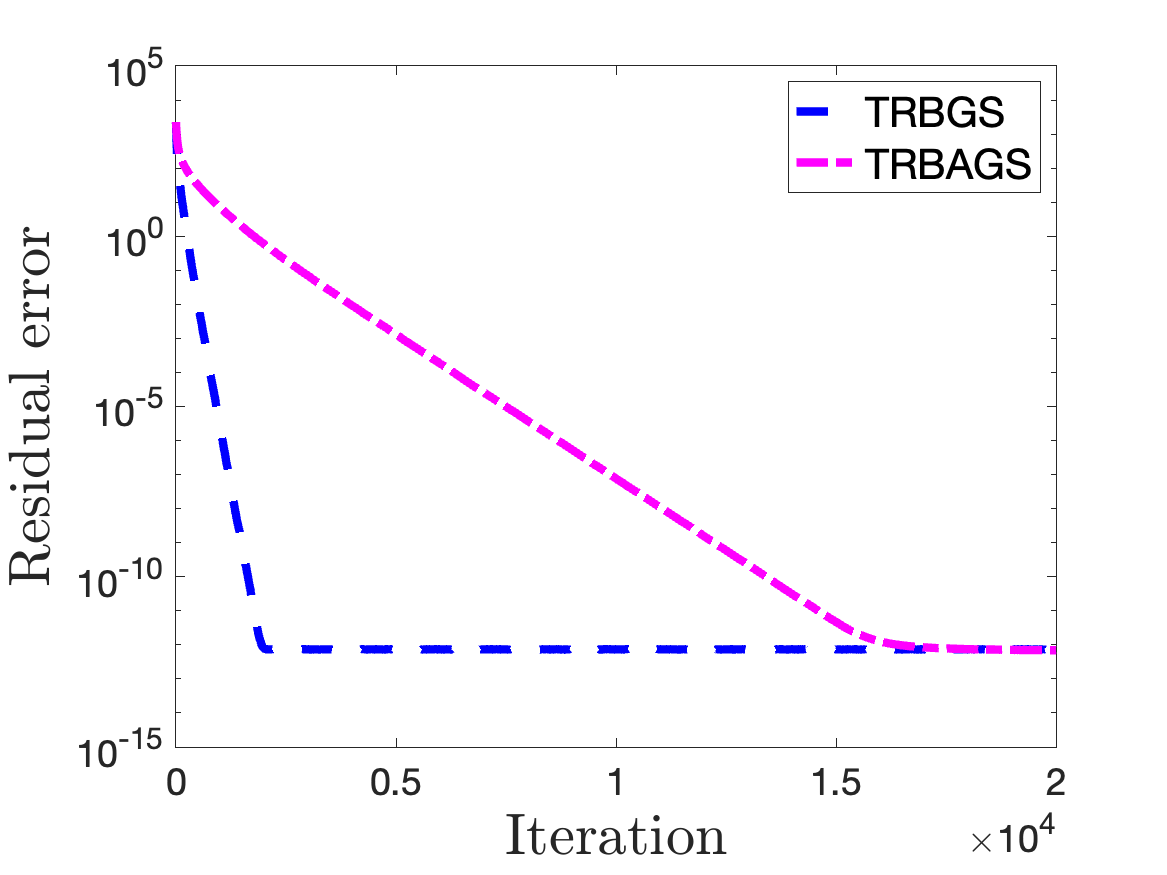}
    
    \caption{Relative error $\|\tX^{(t)} - \tX^\ddagger\|_F/\|\tX^\ddagger\|_F$ and residual error $\|\tA \tX^{(t)} - \tA\tX^\ddagger\|_F$ versus iteration $t$ for TRBGS and TRBAGS on  a consistent linear system when $\tA$ is over-determined. The block size is $|\mu| = 5$.}
    \label{fig:TRBGS_vs_TRBAGS_overdetermined_inconsistent}
\end{figure}

\subsubsection{Comparison FacTRBGS and FacTRBAGS}\label{subsec:FacTRBGSvsFacTRBAGS}

In the following examples, we investigate the performance of FacTRBGS (Algorithm~\ref{alg:factrbgs}) and FacTRBAGS (Algorithm~\ref{alg:factrbags}) for inconsistent systems by analyzing the error convergence for the outer systems $\tU \tZ = \tB$ and inner systems $\tV \tX = \tZ$. That is, we plot all the errors described in Theorem~\ref{thm:errorsboundsonfactrbgs} and~\ref{thm:errorsboundsonfactrbags}. Observe that the error for the inner system is also that of our system of interest $\tA \tX = \tB$. Also, for our algorithms, it does not matter whether the system is inconsistent or consistent. 

We run simulations for $\tA$ under-determined and over-determined by considering various combinations of under-determined and over-determined for $\tU$ and $\tV$. However, we keep both the inner and outer sampling block sizes constant at $\mu = 5$. Note that this number can be changed. We just wanted it fixed in all experiments because the previous experiments with TRBGS and TRBAGS showed that any size--appropriate for the tensor dimensions--should work and we chose $\mu = 5$ because it yielded good results.

\begin{table}[h]
\caption{Cases of the numerical experiments of FacTRBGS (Algorithm~\ref{alg:factrbgs}) and FacTRBAGS  (Algorithm~\ref{alg:factrbags}) algorithms on various sizes of tensors $\tA$, $\tU$ and $\tV$. The dimensions of $\tens{X}$ were $20\times 10\times 30$, the same for all the cases. Cases for which Theorem~\ref{thm:errorsboundsonfactrbgs} and Theorem~\ref{thm:errorsboundsonfactrbags} do not hold are grayed out and these experiments are included in Appendix~\ref{app:Comparison FacTRBGS and FacTRBAGS (Cases in gray cells)}. Cases for which the tensors cannot be formed are blacked out.} \label{tab:factorizedcases}
\small
    \centering
   \begin{tabular}{|c|c|c|c|c|}\hline
   Cases  &\begin{tabular}{@{}c@{}} $\tU$ over-determined \\ $\tV$ under-determined \end{tabular} & \begin{tabular}{@{}c@{}} $\tU$ over-determined \\ $\tV$ over-determined \end{tabular} & \cellcolor{black!25}\begin{tabular}{@{}c@{}} $\tU$ under-determined \\ $\tV$ over-determined \end{tabular} & \cellcolor{black!25}\begin{tabular}{@{}c@{}} $\tU$ under-determined \\ $\tV$ under-determined \end{tabular} \\\hline
 
1. \begin{tabular}{@{}c@{}} $\tA \in \mathbb{R}^{10\times 20 \times 30}$ \\ under-determined \end{tabular}& 
\begin{tabular}{@{}c@{}}$m_1=5$\\ Figure~\ref{fig:factorized_Incons_U_over_V_under_A_under} \end{tabular} & \cellcolor{black}- &
\cellcolor{black!10}\begin{tabular}{@{}c@{}}$m_1=25$\\ Figure~\ref{fig:factorized_Incons_U_under_V_over_A_under} \end{tabular} &
\cellcolor{black!10}\begin{tabular}{@{}c@{}}$m_1=15$\\ Figure~\ref{fig:factorized_Incons_U_under_V_under_A_under}
\end{tabular} 
\\\hline 

2. \begin{tabular}{@{}c@{}} $\tA \in \mathbb{R}^{30\times 20 \times 30}$ \\ over-determined \end{tabular}& 
\begin{tabular}{@{}c@{}}$m_1=15$ \\            Figure~\ref{fig:factorized_Incons_U_over_V_under_A_over} \end{tabular} & 
\begin{tabular}{@{}c@{}}$m_1=25$\\ Figure~\ref{fig:factorized_Incons_U_over_V_over_A_over} \end{tabular} &
\cellcolor{black!10}\begin{tabular}{@{}c@{}}$m_1=35$\\ Figure~\ref{fig:factorized_Incons_U_under_V_over_A_over} \end{tabular}
& \cellcolor{black}-  
\\\hline 

\end{tabular}
\end{table}

Table~\ref{tab:factorizedcases} summarizes the properties of systems considered and indicates the figures corresponding to each case. Note that some cells in the table are left empty and color-coded black because no combination of the tensors $\tU$ and $\tV$ with the indicated property in the first row can produce a tensor $\tA$ with the desired property in the first column. Also, note that the shaded gray cells indicate cases where our theory shows that convergence is not guaranteed.

Figures~\ref{fig:factorized_Incons_U_over_V_under_A_under},~\ref{fig:factorized_Incons_U_over_V_under_A_over}, and~\ref{fig:factorized_Incons_U_over_V_over_A_over} show that our algorithms produce the expected convergence results. We can see that for all the cases covered by our theoretical result, Theorem~\ref{thm:errorsboundsonfactrbgs} and Theorem~\ref{thm:errorsboundsonfactrbags}, the inner systems ($\tV \tX = \tZ$) provide the desired least-norm solution $\tX$. More notable is what happens to the outer system $\tU \tZ = \tB$. When $\tV$ is under-determined, the residual error converges to 0, i.e., the algorithm finds a least-norm solution, however, the relative error does not converge. This implies that approximations of the least-norm (in particular, one that is {\it not} exact) solution to the outer system are sufficient as an input to ensure the convergence of the inner system. 

\begin{figure}[h!]
    \includegraphics[width=0.44\textwidth]{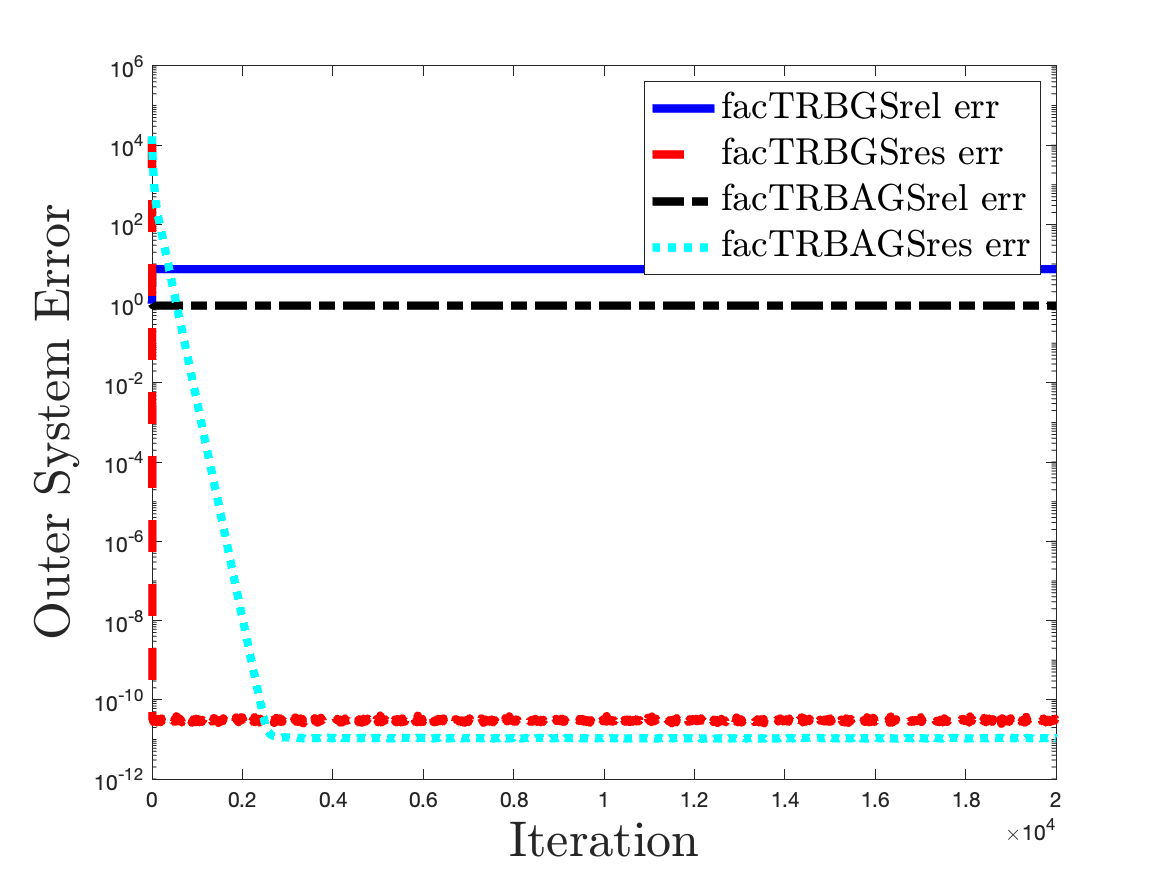}\hspace{.25cm}
    \includegraphics[width=0.44\textwidth]{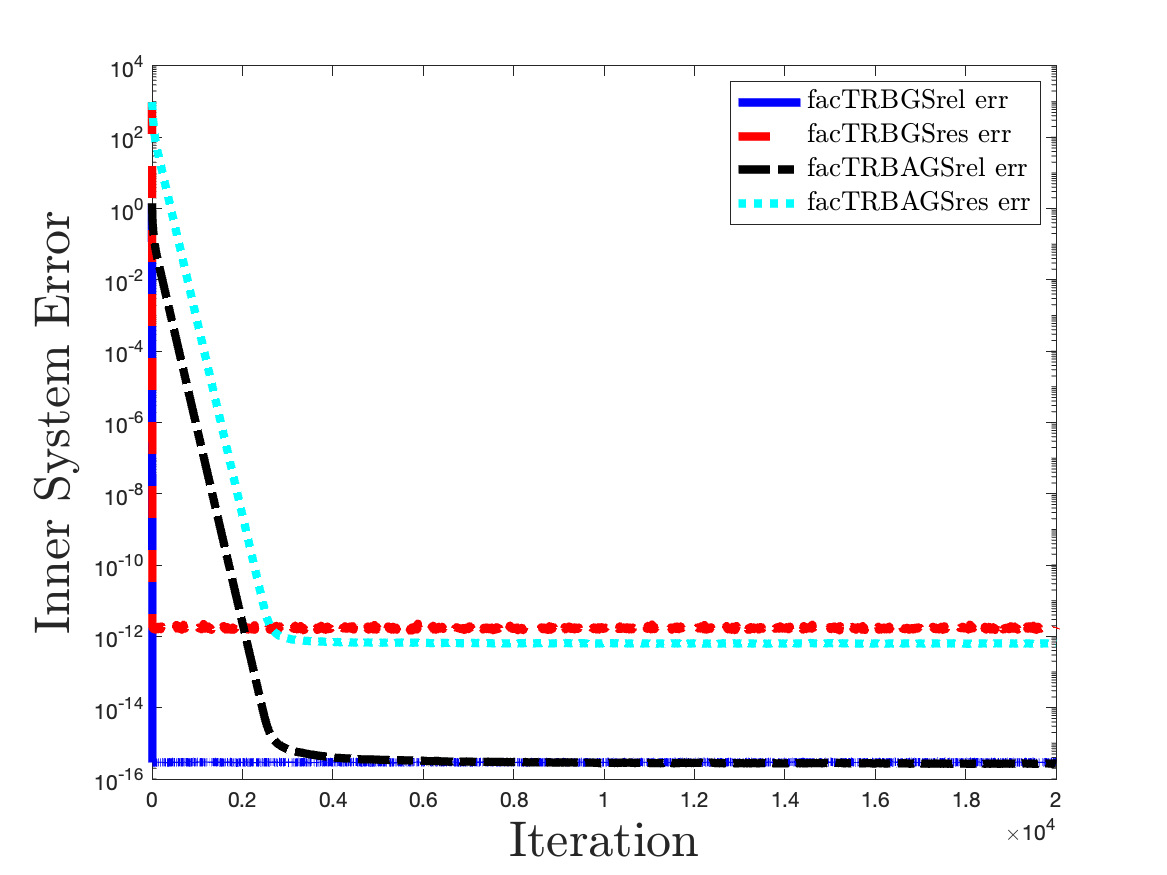}
    
    \caption{Relative error (rel err) and residual error (res err) versus iteration $t$ for the interlaced outer system $\tU \tZ = \tB$ and inner system $\tV \tX = \tZ$. Here, FacTRBGS and FacTRBAGS are applied to an inconsistent tensor system where $\tA$ is under-determined, $\tU$ is over-determined, and $\tV$ is under-determined. The block size is kept constant at $|\mu| = 5$.}
    \label{fig:factorized_Incons_U_over_V_under_A_under}
    \includegraphics[width=0.44\textwidth]{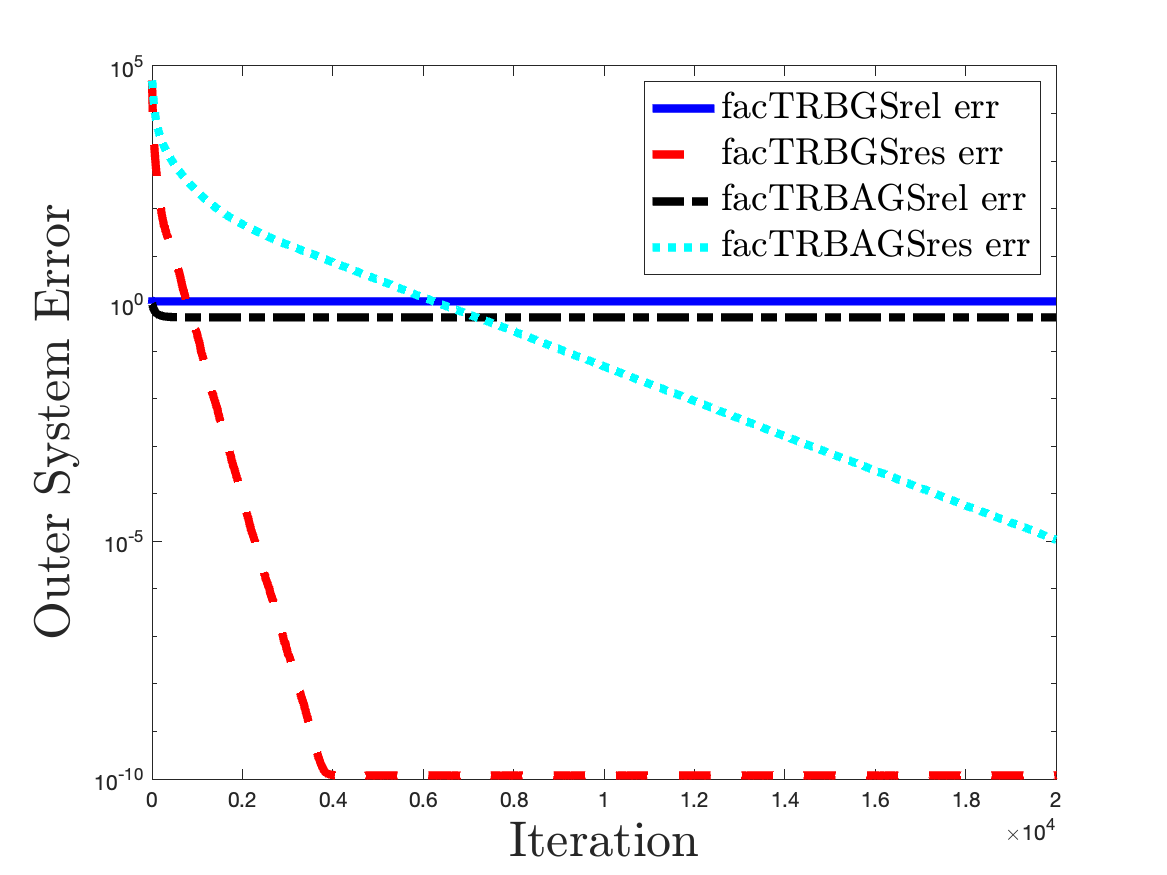}\hspace{.25cm}
    \includegraphics[width=0.44\textwidth]{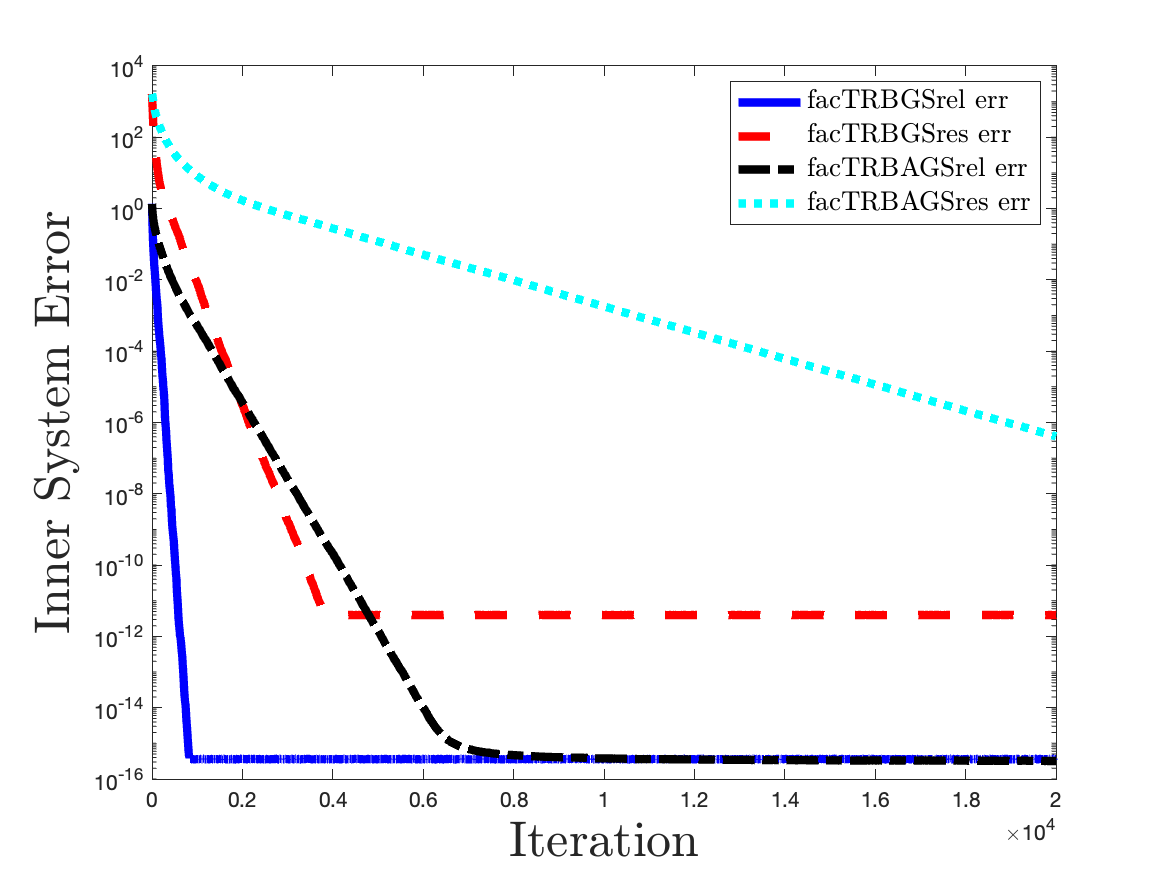}
    
    \caption{Relative error (rel err) and residual error (res err) versus iteration $t$ for the interlaced outer system $\tU \tZ = \tB$ and inner system $\tV \tX = \tZ$. Here, FacTRBGS and FacTRBAGS are applied to an inconsistent tensor system where $\tA$ is over-determined, $\tU$ is over-determined, and $\tV$ is under-determined. The block size is kept constant at $|\mu| = 5$.}
    \label{fig:factorized_Incons_U_over_V_under_A_over}
    \includegraphics[width=0.44\textwidth]{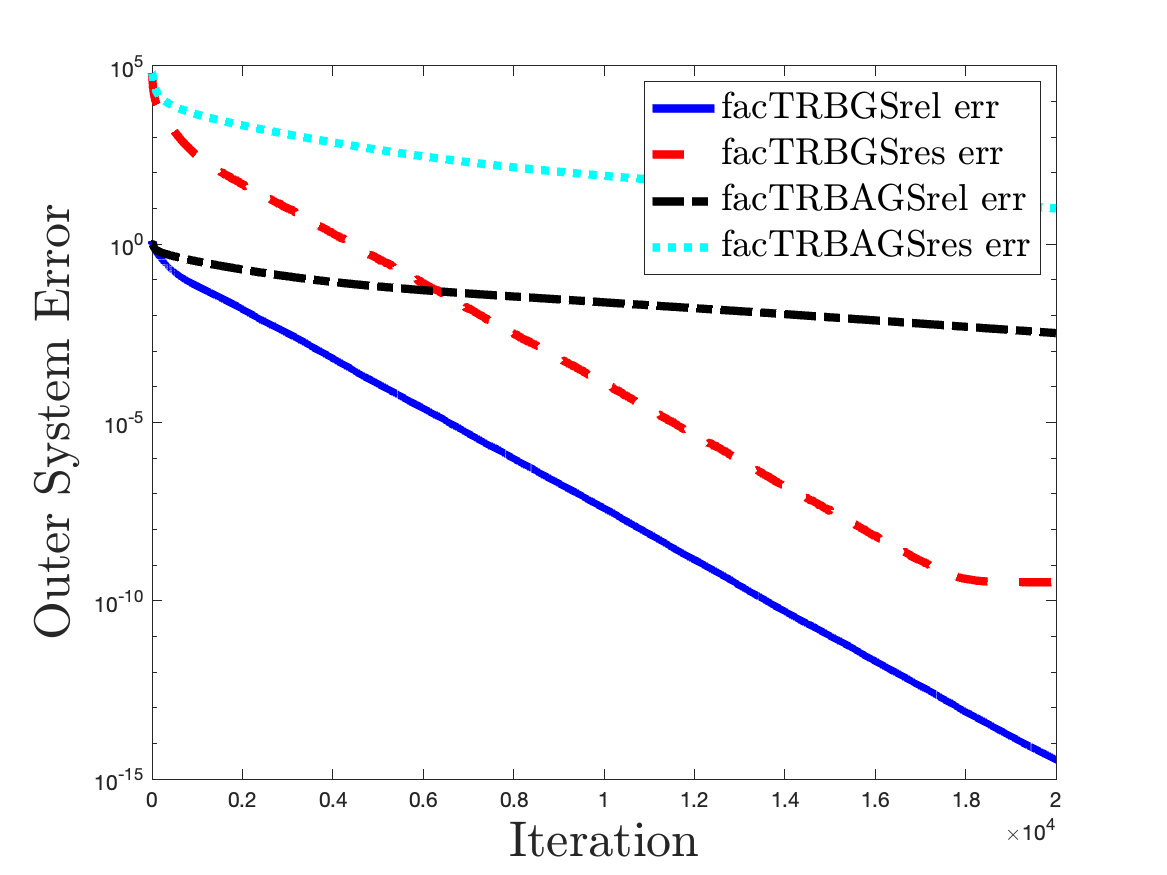}\hspace{.25cm}
    \includegraphics[width=0.44\textwidth]{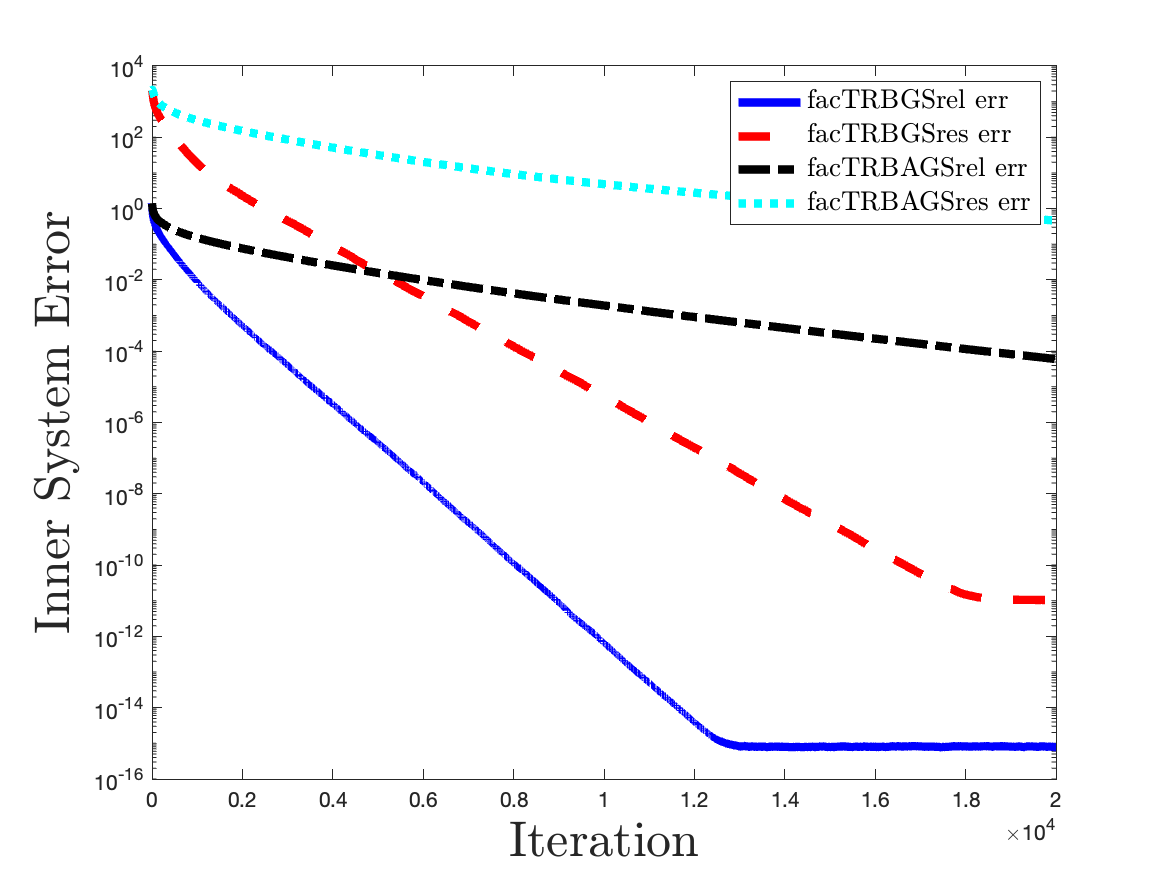}
    
    \caption{Relative error (rel err) and residual error (res err) versus iteration $t$ for the interlaced outer system $\tU \tZ = \tB$ and inner system $\tV \tX = \tZ$. Here, FacTRBGS and FacTRBAGS are applied to an inconsistent tensor system where $\tA$ is over-determined, $\tU$ is over-determined, and $\tV$ is over-determined. The block size is kept constant at $|\mu| = 5$.}
    \label{fig:factorized_Incons_U_over_V_over_A_over}
\end{figure}

Note that, for simplicity, we set the parameters $\omega_1 = \omega_2$ to $1$ throughout our calculations. For this value, FacTRBGS performs better than FacTRBAGS. Although this is not the focus of our work, we show that the convergence can be sped up by using another parameter value. (Figure~\ref{fig:factorized_Incons_U_over_V_over_A_over_step_size_5} in Appendix~\ref{app:Comparison FacTRBGS and FacTRBAGS (Faster FacTRBAGS convergence)} illustrates this faster convergence when we vary $\omega_1$ and $\omega_2$ for the case where $\tA$, $\tU$ and $\tV$ are all over-determined.) In general, one could find an optimal parameter interval using the condition on the parameters $\omega_1$ and $\omega_2$ stated in the theoretical results, but such optimal intervals depend on singular values whose computations can be costly.  

We also carried out experiments for the cases in the gray cells of Table~\ref{tab:factorizedcases} and the results can be found Appendix~\ref{app:Comparison FacTRBGS and FacTRBAGS (Cases in gray cells)}. From the lack of convergence of the algorithms for all inner systems, we can assume that divergence is almost certain for cases in which our Theorem offers no guarantees. However, note that we obtain the least-norm solution for the outer system when $\tA$, $\tU$, and $\tV$ are all under-determined. This comes from the construction of the inconsistent system, which may have made the residual 0 in this case.

\subsection{Video Deblurring}\label{subsec:deblurring_experiments}

We address the problem of recovering a true video tensor 
$\tX \in \R^{m \times n \times p}$ from a blurry video tensor $\tY \in \R^{m \times n \times p}$, given a known blurring operator. Specifically, we assume that each frame of the video undergoes blurring via a circular convolution kernel $\mat{H} \in \R^{m_1 \times n_1}$. Without loss of generality, the kernel can be extended to $\mat{H} \in \R^{m \times n}$ by padding it with zeros. Then, the relationship between the slices of the blurry and true video tensors can be expressed as the matrix system
\begin{equation*}\label{eqn:deblurringeqn}
    \underbrace{\begin{bmatrix} \mat{H}_1 & \mat{H}_m & \cdots &   \mat{H}_2 \\
 \mat{H}_2 & \mat{H}_{1} & \cdots  &\mat{H}_3\\ \mat{H}_3 &\mat{H}_2 & \cdots  &\vdots \\ \vdots & \vdots &\vdots &\vdots \\ \mat{H}_{m-1} & \mat{H}_{m-2}  &\cdots &\mat{H}_m\\
 \mat{H}_m & \mat{H}_{m-1} & \cdots  & \mat{H_1}\end{bmatrix}}_{\mat{H}}
 \underbrace{\begin{bmatrix} \bigg| & \cdots & \bigg| \\ \\ \tt{Vec}\left(\tX_{1::}\right) &\cdots & \tt{Vec}\left(\tX_{m::} \right) \\ \\\bigg| & \cdots & \bigg|\\ \end{bmatrix}}_{\mat{X}}
 = 
 \underbrace{\begin{bmatrix} \bigg| & \cdots & \bigg| \\ \\ \tt{Vec}\left(\tY_{1::}\right) &\cdots & \tt{Vec}\left(\tY_{m::} \right) \\ \\\bigg| & \cdots & \bigg|\\ \end{bmatrix}}_{\mat{Y}},
\end{equation*}
where $\mat{H_i} = \text{circ}(\vh_i) \in \R^{n \times n}$ is a circulant matrix and $\vh_i \in \R^n$ denotes $i^{th}$ row of $\mat{H}$, and the matrices $\mat{X}$ and $\mat{Y}$ are obtained by the illustrated matricization of $\tX$ and $\tY$, i.e., by defining the columns by the vectors obtained by vectorizing frontal slices of $\tX$ and $\tY$. Using the definition of the t-product~\ref{def:t-product}, we can rewrite this system as the t-linear system  
$$ \tH \ast \widetilde{\tX} = \ \widetilde{\tY},$$ 
where the $i$-th frontal face of the blurring tensor $\tH \in \R^{n \times n \times m}$ is given by $\mat{H}_i$ and $\widetilde{\tX}$ and $\widetilde{\tY}$ are obtained by suitably refolding $\mat{X}$ and $\mat{Y}$ respectively to dimensions $ n \times p \times m $. 

\begin{figure}[h!]
    \includegraphics[width=0.475\textwidth]{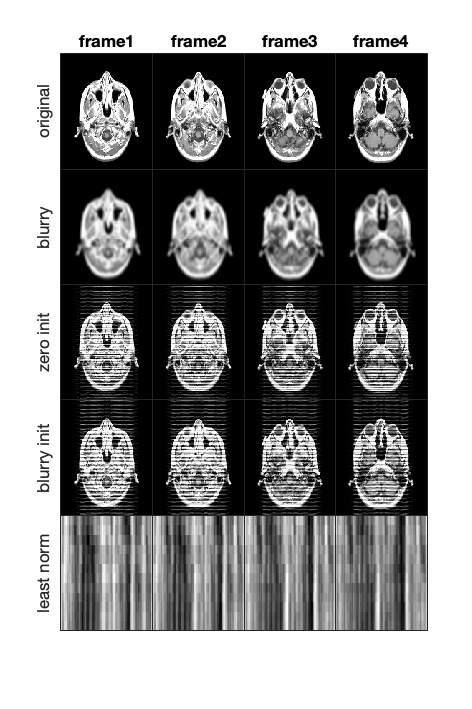}%
    \includegraphics[width=0.475\textwidth]{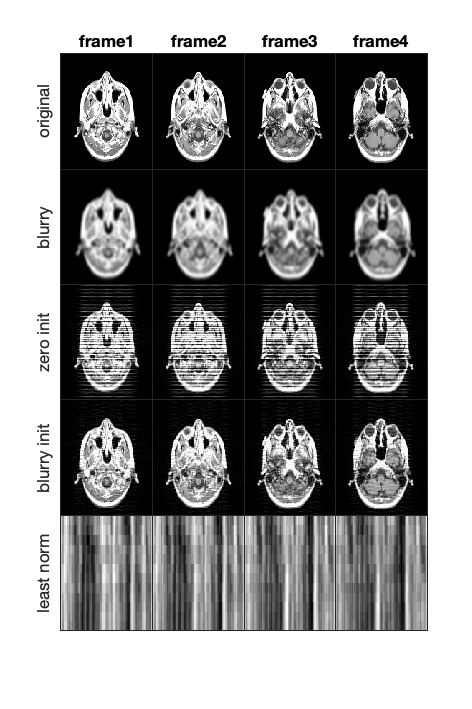}
    
    \vspace{-.5cm}
    (a) Deblurring using TRBGS \hspace{2.5cm} (b) Deblurring using TRBAGS 

    \includegraphics[width=0.65\textwidth]{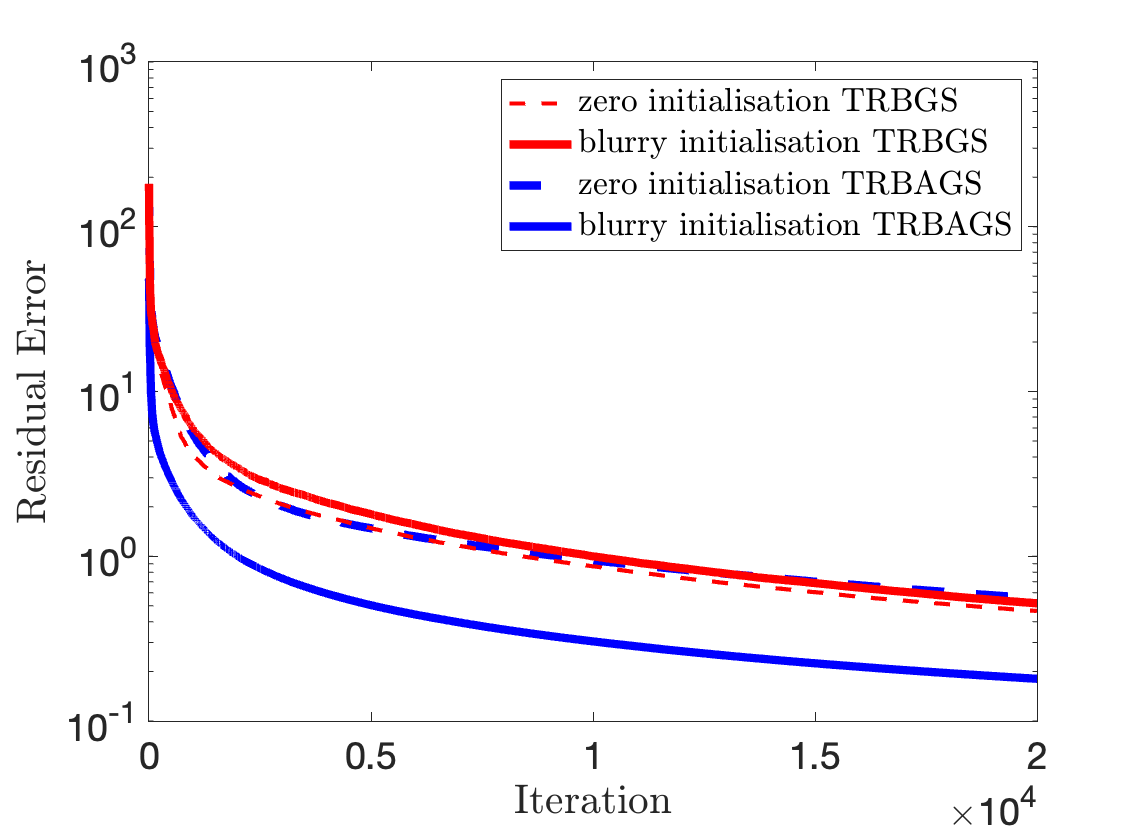}
    
    (c) Errors
    
    \caption{Deblurring of twice sequentially blurred video data using the TRBGS and TRBAGS. In (a) and (b), we have frames from the original video (top row), twice blurred frames (second row), frames recovered using TRBGS and TRBAGS (third and fourth row) respectively. The least norm solution is show on the bottom row. In (c), we  show the residual error of TRBGS and TRBAGS iterates for two different initializations.}\label{fig:TGSdeblurring}
\end{figure}

In our experiment, we consider the recovery of 12 frames of twice blurred MRI images of size $128 \times 128$. First, we circularly blur the images using a $5\times5$ Gaussian filter followed by a $5\times5$ averaging filter. These operations amount to solving a t-product system, $\tU\tV\tX = \tY$, where $\tX\in\R^{128\times 12 \times 128}$ represents the original images, $\tY\in\R^{128\times12\times128}$ the blurred images, and $\tU$ and $\tV \in \R^{128 \times 128 \times 128}$ represents the tensor obtained by the successive Gaussian blurring and average blurring operators.

We can visualize the results of our experiment in Figure~\ref{fig:TGSdeblurring}, which shows that we can recover the frames with a residual error of about $10^{-1}$ for TRBAGS, while the error for TRBGS is larger. Interestingly, TRBAGS performs better than TRBGS, with fewer artifacts in the recovered images. We believe that the pseudoinverse computation may be the source of the larger numerical errors in TRGBS. Although the error is non-negligible, the recovered images qualitatively match the original video data much more closely than the blurred versions, especially for TRBAGS, demonstrating the experiment's effectiveness for practical purposes. We omit factorized versions since the results in Section~\ref{subsec:FacTRBGSvsFacTRBAGS} show that they have performances comparable to TRBGS and TRBAGS, and thus, we would expect the same qualitative results.

\section{Conclusion}

We have developed two pairs of algorithms to solve tensor linear regression problems. Our first two methods, TRBGS and TRBAGS, find the least-norm solution for the problem~\eqref{eq:regression}, with the particularity that the formulation of TRBGS uses a pseudoinverse while TRBAGS is pseudoinverse free. The second pair of methods, FacTRBGS (with pseudoinverse) and FacTRBAGS (pseudoinverse free), addresses the case in which the measurement tensor is given as a t-product of two tensors~\eqref{eq:fact_regression}. We provide theoretical guarantees that all these methods yield the least-norm solution whether the system is consistent or inconsistent. We also show their effectiveness on synthetic and real data through numerical experiments with various considerations such as over-determined/under-determined systems and sampling block sizes. For the real-data case in particular, we apply TRBGS and TRBAGS to an image deblurring problem and recover a qualitatively good match for the original images. We also illustrate that divergence occurs in cases outside the assumptions of our theorem, demonstrating the strength of our theoretical results. 

Our methods add to the broader framework of extending matrix regression algorithms to the tensor setting, and we see future directions for improvements. In our numerical experiments, we set all step sizes to 1 in our inverse-free methods, following standard practice. This choice worked well for our purposes. Although hyperparameter selection can generally be computationally costly, future work could explore the effect of different step-size parameters on convergence by considering multiple linear systems. An example of faster convergence illustrated in the appendix justifies this future direction. Additionally, some randomized iterative matrix algorithms (e.g.,~\cite{du2021randomized}) use weighted averages of block rows/columns rather than a uniform weight of 1, as we have done in our work. Investigating the impact of weight selection on the performance of our algorithms is another important direction for future research.

\section*{Acknowledgments}

The initial research for this effort was conducted at the Research Collaboration Workshop for Women in Data Science and Mathematics (WiSDM), August 2023 held at the Institute for Pure and Applied Mathematics (IPAM). Funding for the workshop was provided by IPAM, AWM and DIMACS (NSF grant CCF1144502).

This material is based on work supported by the National Science Foundation under Grant No. DMS-1928930, while several of the authors were in residence at the Mathematical Sciences Research Institute in Berkeley, California, during the summer of 2024.  

Several of the authors also appreciate the support provided to them at a SQuaRE at the American Institute of Mathematics. The authors thank AIM for providing a supportive and mathematically rich environment.

This material is based upon work supported by the National Science Foundation under Grant No. DMS-1929284 while several of the authors were in residence at the Institute for Computational and Experimental Research in Mathematics in Providence, RI, during the ``Randomized Algorithms for Tensor Problems with Factorized Operations or Data" Collaborate@ICERM.

JH was partially supported by NSF DMS \#2211318 and NSF CAREER \#2440040. DN was partially supported by NSF DMS \#2011140. 

\bibliographystyle{plain}
\bibliography{main2}

\appendix

\section{Effect of Block Sizes (Continued)}\label{app:Effect of Block Sizes}

Figures~\ref{fig:appendTRBAGS_smallitsnum} and~\ref{fig:appendTRBAGS_smallitsnumlarge} are an illustration of the assertion in Section~\ref{subsubsec:TRBGSandTRBAGSSuites} that, in the inverse-free (TRBAGS) case, the block size affects the speed of TRBAGS more significantly over a small number of iterations, and/or when the tensor dimensions are larger.

\begin{figure}[h]
    \includegraphics[width=0.475\textwidth]{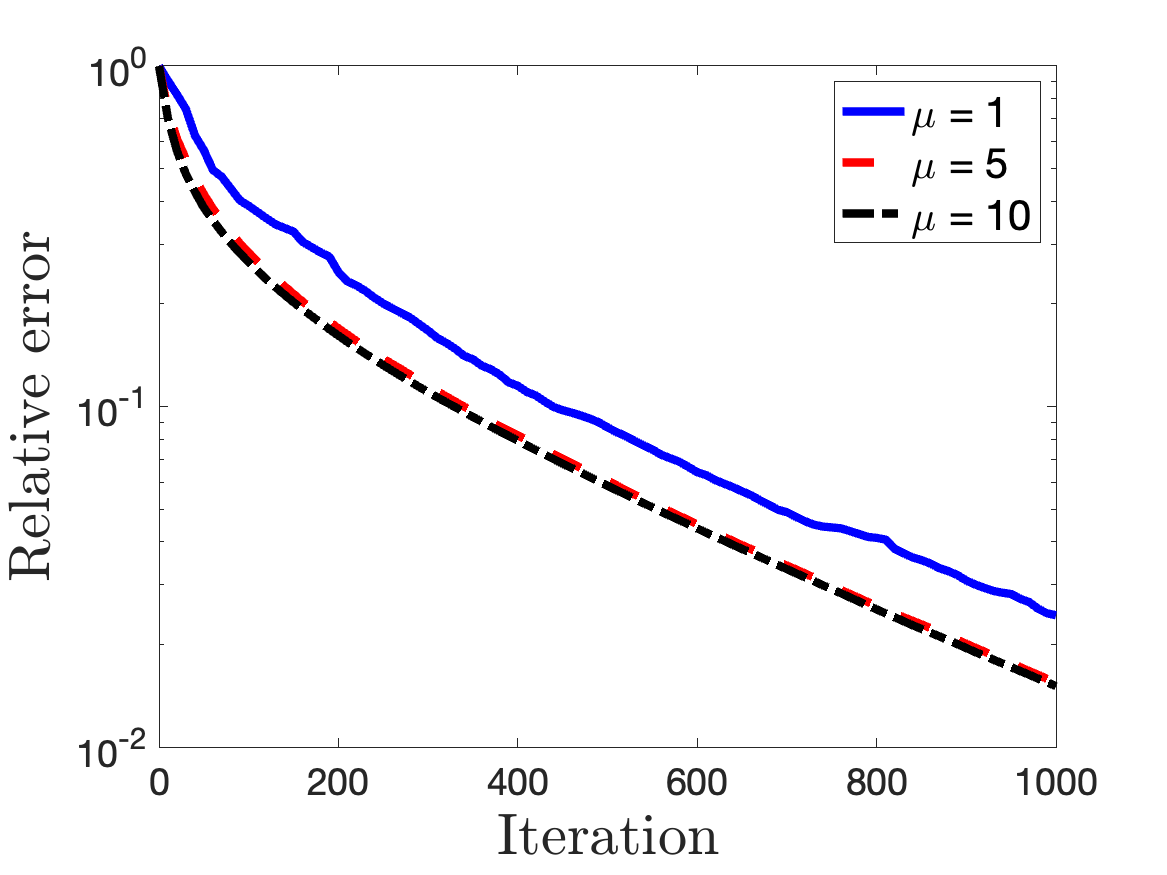}
    \hspace{.25cm}%
    \includegraphics[width=0.475\textwidth]{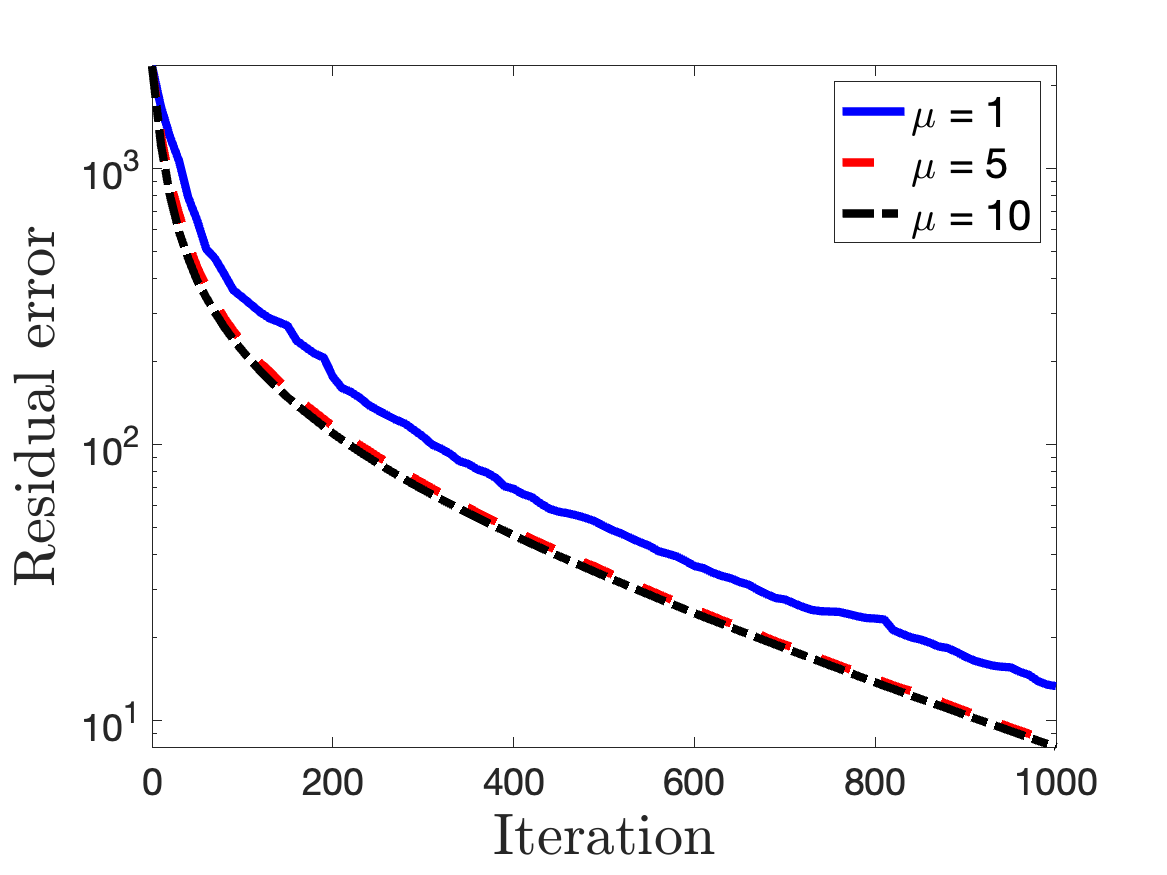}
    
    \caption{Relative error $\|\tX^{(t)} - \tX^\ddagger\|_F/\|\tX^\ddagger\|_F$ and residual error $\|\tA \tX^{(t)} - \tA\tX^\ddagger\|_F$ versus iteration $t$ of TRBAGS on a consistent linear system when $\tA$ is over-determined and of size $\R^{30 \times 20 \times 30}$. We consider sampling block sizes $|\mu| \in \{1, 5, 10\}$ in each case.}
    \label{fig:appendTRBAGS_smallitsnum}
\end{figure}

\begin{figure}[h]
    \includegraphics[width=0.475\textwidth]{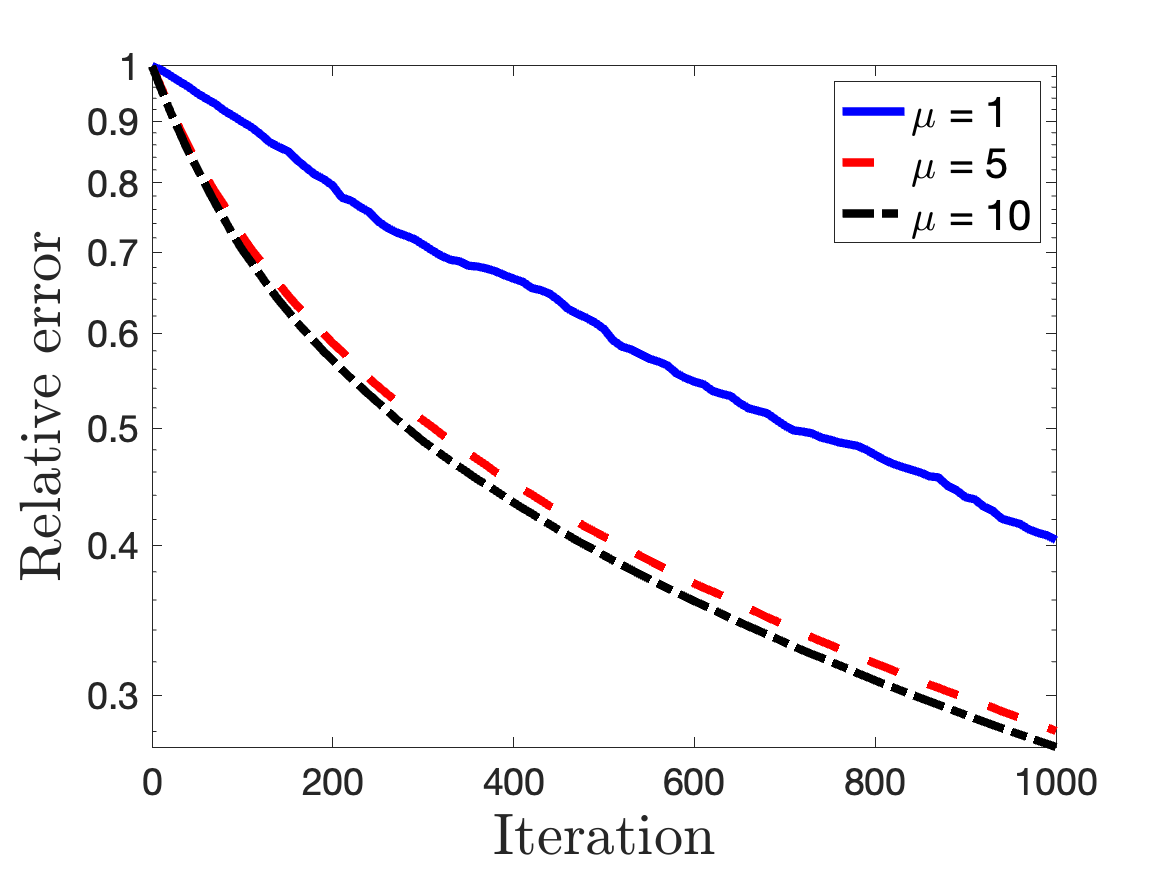}
    \hspace{.25cm} %
    \includegraphics[width=0.475\textwidth]{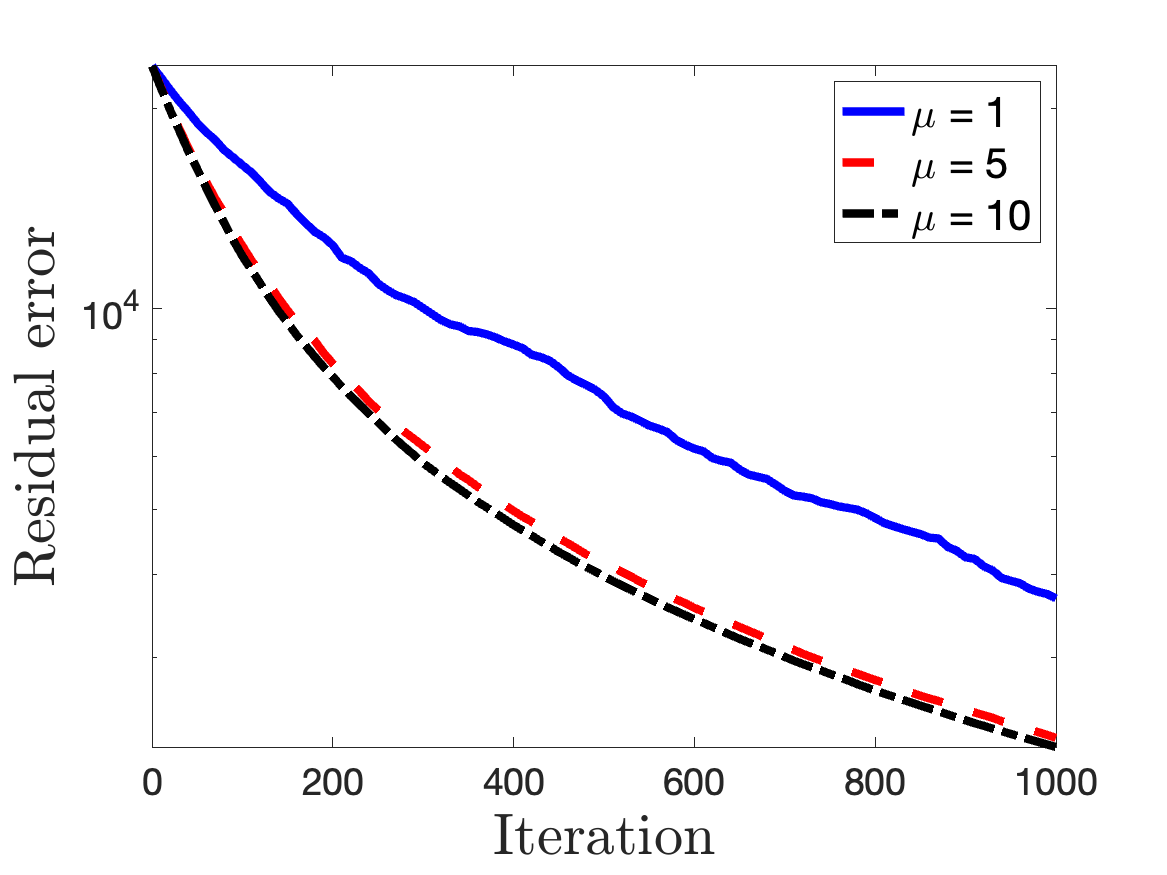}
    
    \caption{Relative error $\|\tX^{(t)} - \tX^\ddagger\|_F/\|\tX^\ddagger\|_F$ and residual error $\|\tA \tX^{(t)} - \tA\tX^\ddagger\|_F$ versus iteration $t$ of TRBAGS on a consistent linear system when $\tA$ is over-determined and of size $\R^{300 \times 200 \times 30}$. We consider sampling block sizes $|\mu| \in \{1, 5, 10\}$ in each case.}
    \label{fig:appendTRBAGS_smallitsnumlarge}
\end{figure}

\newpage
\section{Comparison FacTRBGS and FacTRBAGS (Faster FacTRBAGS convergence)}\label{app:Comparison FacTRBGS and FacTRBAGS (Faster FacTRBAGS convergence)}

Figure~\ref{fig:factorized_Incons_U_over_V_over_A_over_step_size_5} illustrates the fact that we can achieve faster convergence by changing the step-size values.

\begin{figure}[h]
    \includegraphics[width=0.475\textwidth]{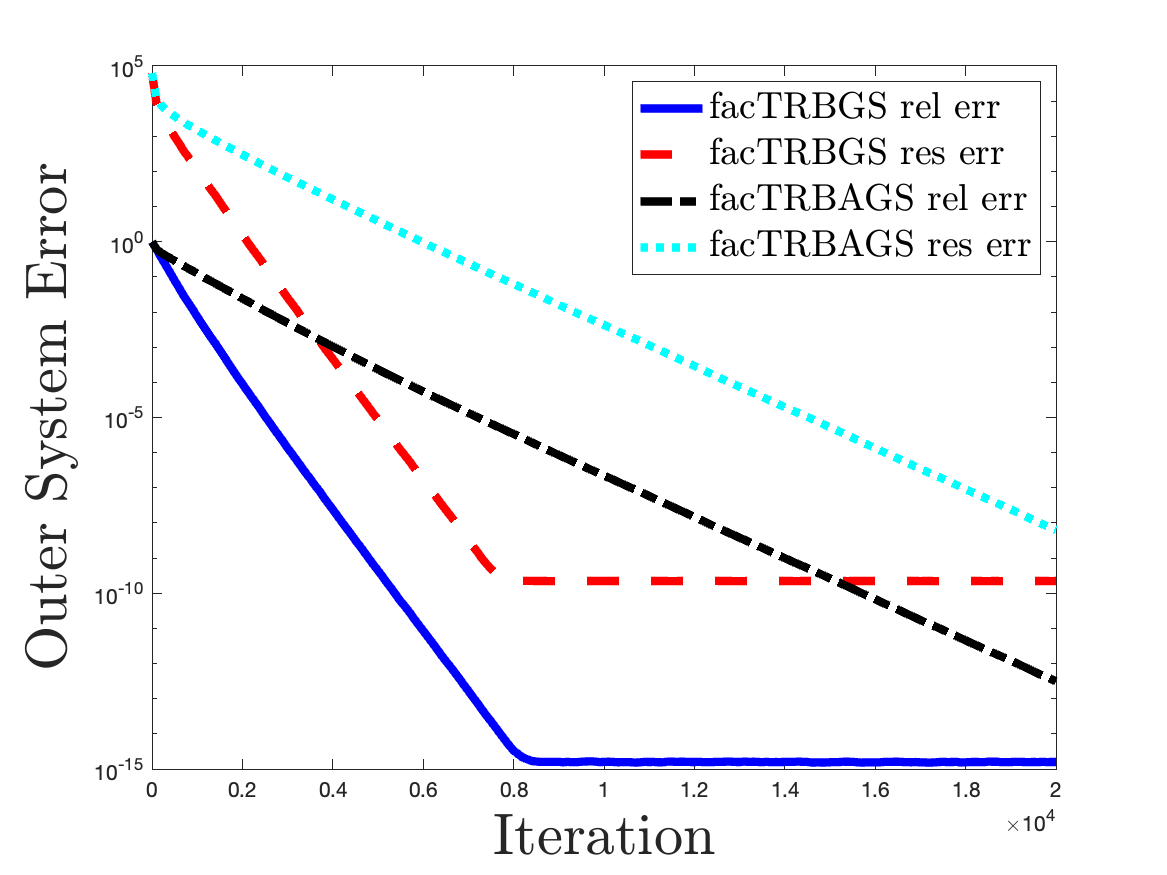}\hspace{.25cm}
    \includegraphics[width=0.475\textwidth]{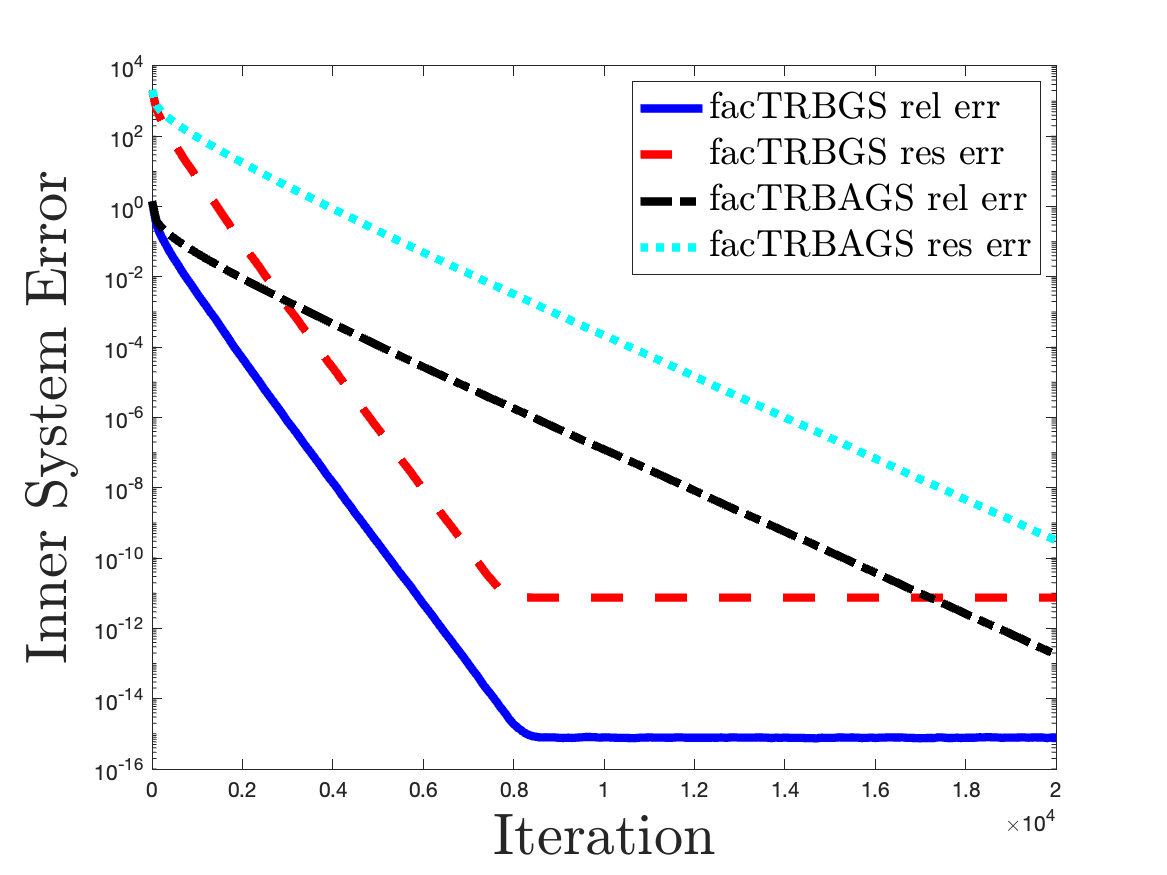}
    
    \caption{Relative error (rel err) and residual error (res err) versus iteration $t$ for the interlaced outer system $\tU \tZ = \tB$ and inner system $\tV \tX = \tZ$. Here, FacTRBGS and FacTRBAGS are applied to an inconsistent tensor system where $\tA$ is over-determined, $\tU$ is over-determined, and $\tV$ is over-determined. The block size is kept constant at $|\mu| = 5$.}
    \label{fig:factorized_Incons_U_over_V_over_A_over_step_size_5}
\end{figure}

\section{Comparison FacTRBGS and FacTRBAGS (Cases in gray cells)}\label{app:Comparison FacTRBGS and FacTRBAGS (Cases in gray cells)}

Figures~\ref{fig:factorized_Incons_U_under_V_over_A_under} through~\ref{fig:factorized_Incons_U_under_V_under_A_under} illustrate the fact that in cases for which theoretical guarantees do not hold, we do not achieve convergence for the inner systems.

\begin{figure}[h]
    \includegraphics[width=0.475\textwidth]{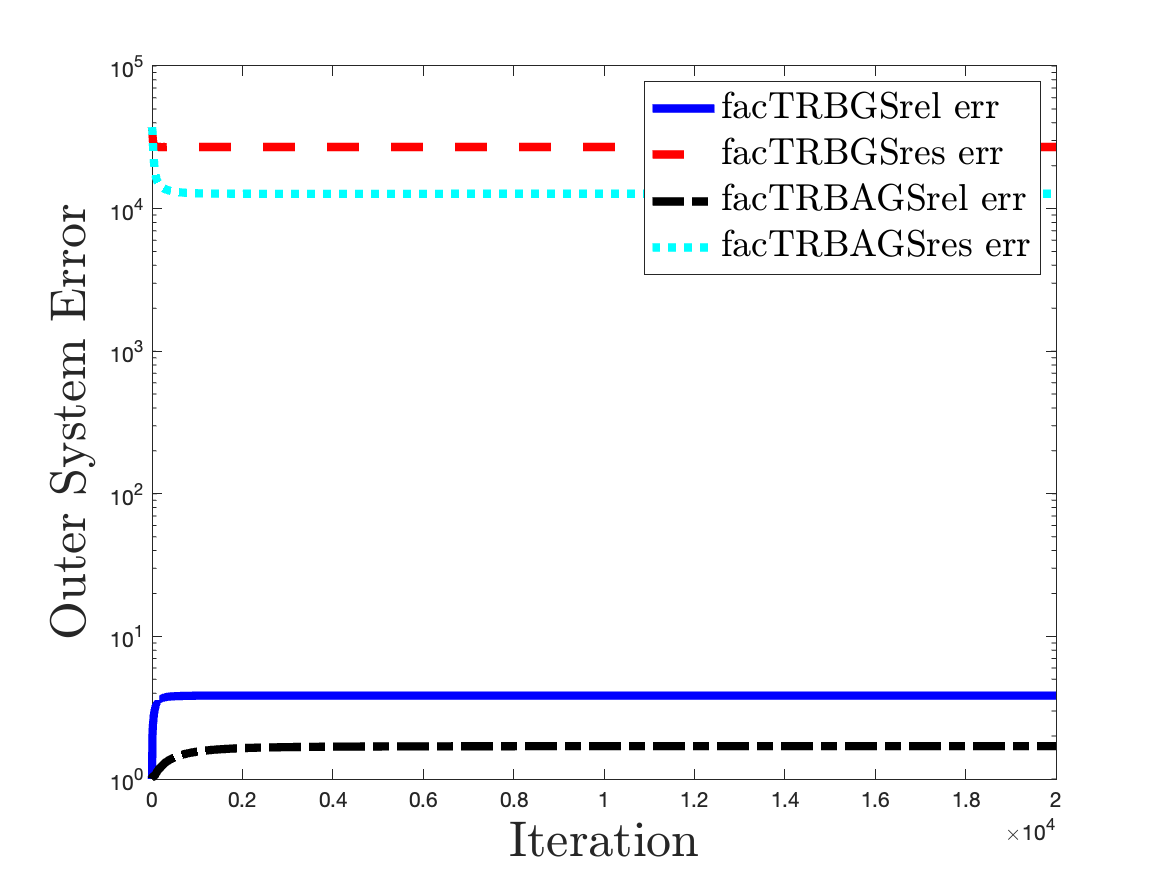}\hspace{.25cm}
    \includegraphics[width=0.475\textwidth]{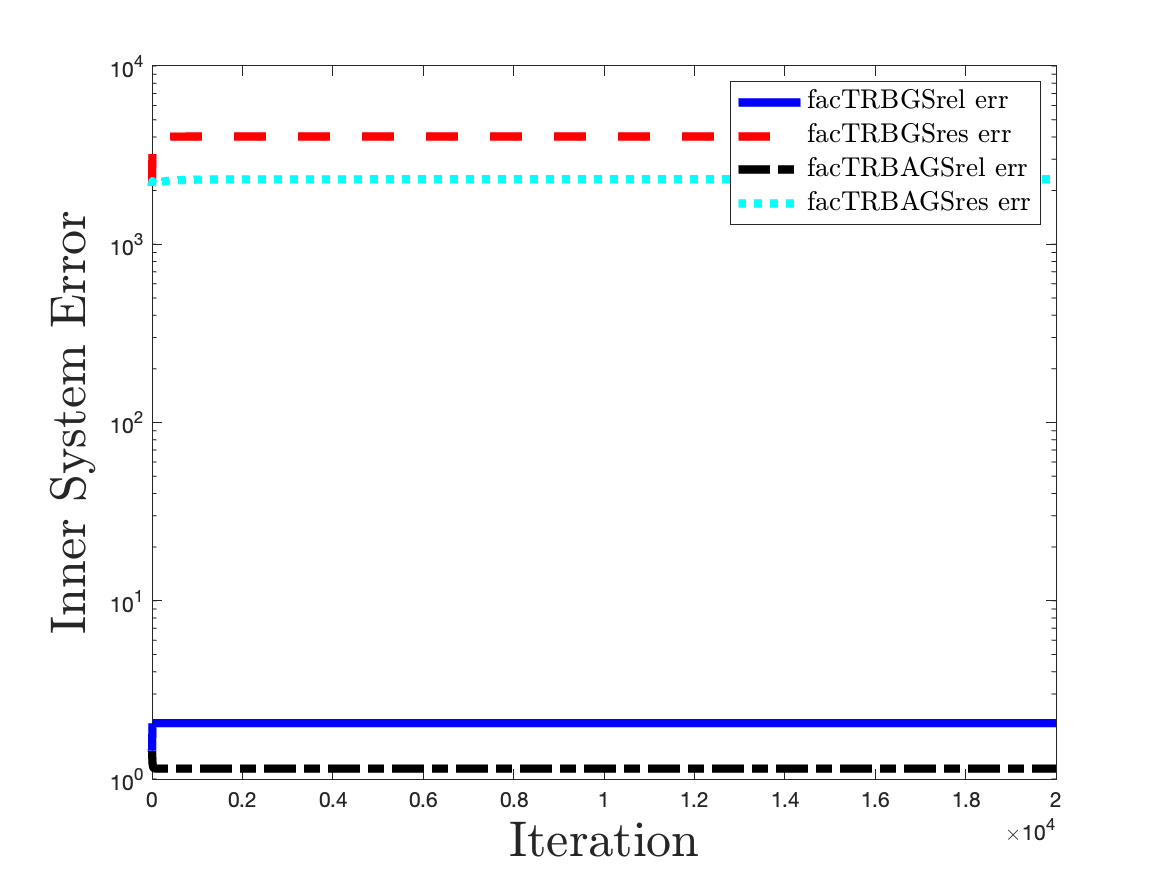}
    
    \caption{Relative error (rel err) and residual error (res err) versus iteration $t$ for the interlaced outer system $\tU \tZ = \tB$ and inner system $\tV \tX = \tZ$. Here, FacTRBGS and FacTRBAGS are applied to an inconsistent tensor system where $\tA$ is under-determined, $\tU$ is under-determined, and $\tV$ is over-determined. The block size is kept constant at $|\mu| = 5$.}
    \label{fig:factorized_Incons_U_under_V_over_A_under}
    \includegraphics[width=0.475\textwidth]{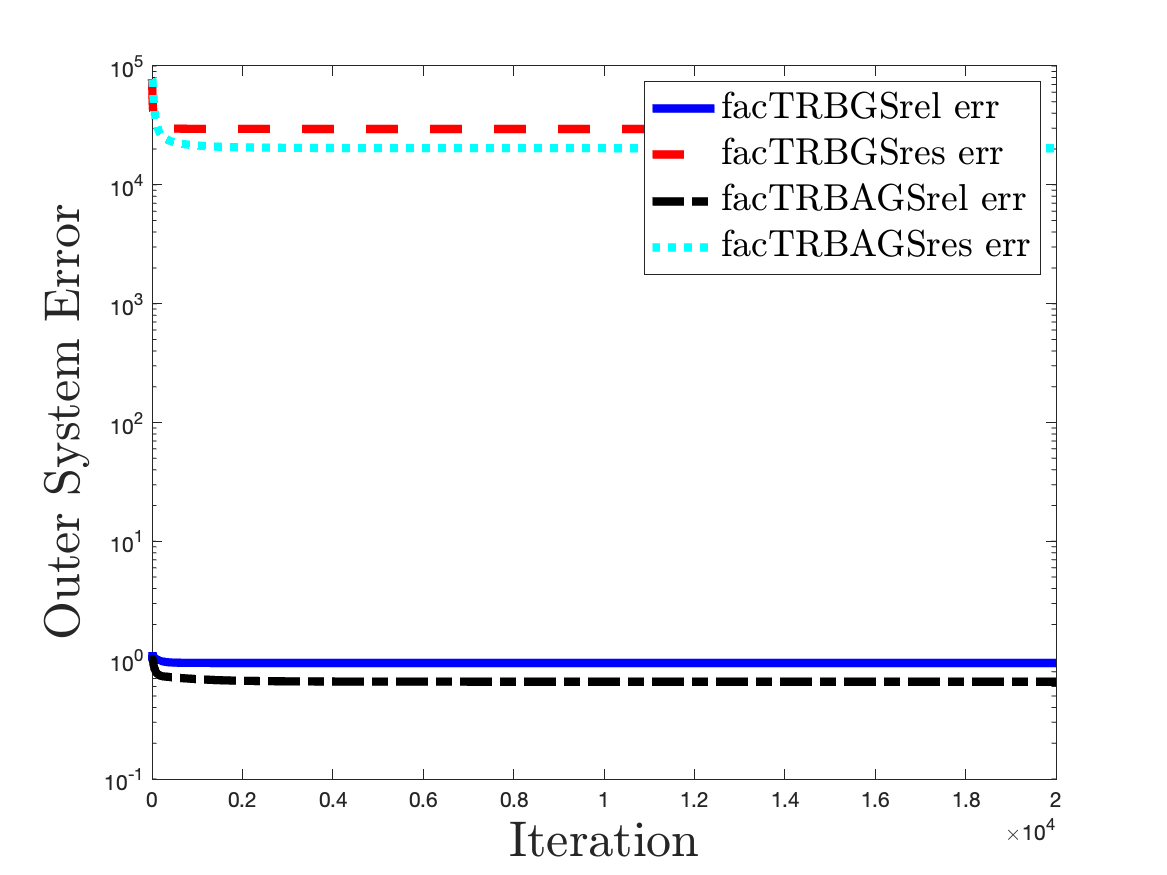}\hspace{.25cm}
    \includegraphics[width=0.475\textwidth]{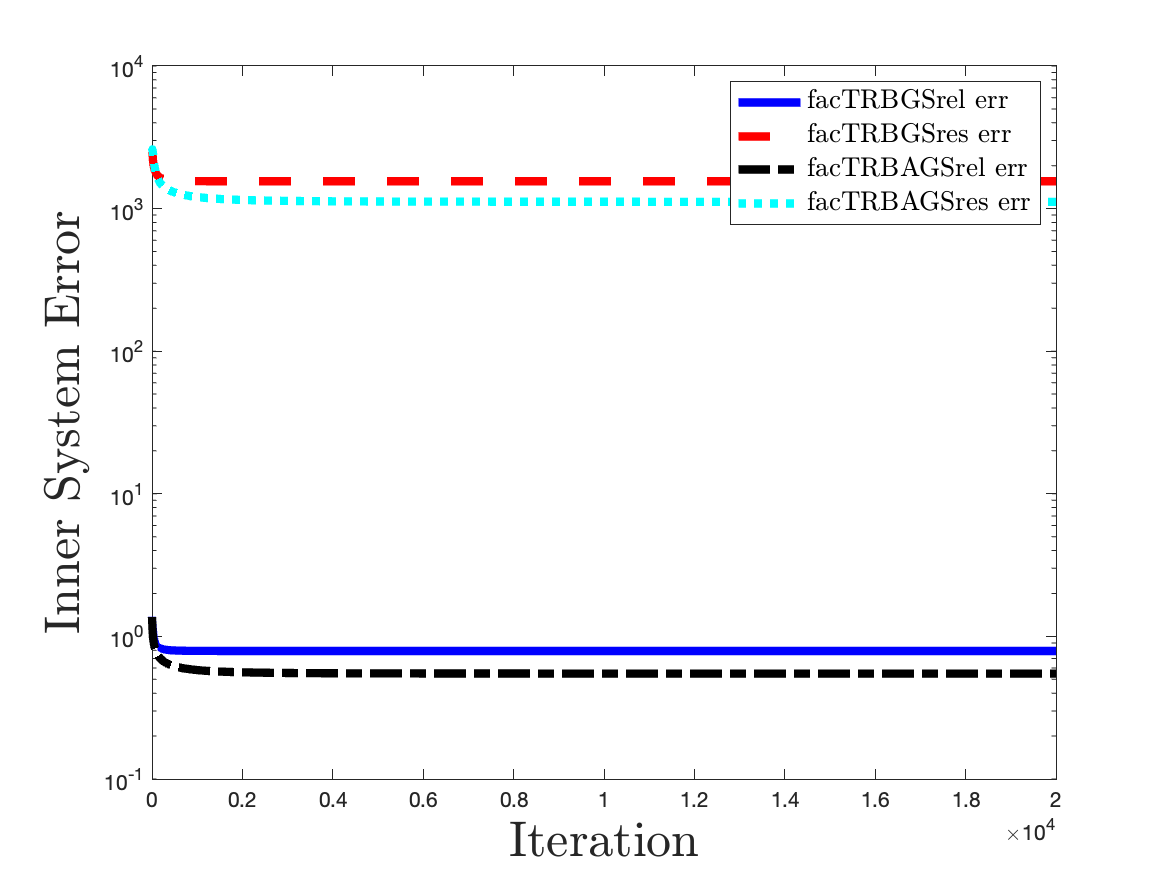}
    
    \caption{Relative error (rel err) and residual error (res err) versus iteration $t$ for the interlaced outer system $\tU \tZ = \tB$ and inner system $\tV \tX = \tZ$. Here, FacTRBGS and FacTRBAGS are applied to an inconsistent tensor system where $\tA$ is over-determined, $\tU$ is under-determined and $\tV$ is over-determined. The block size is kept constant at $|\mu| = 5$.}
    \label{fig:factorized_Incons_U_under_V_over_A_over}
    \includegraphics[width=0.475\textwidth]{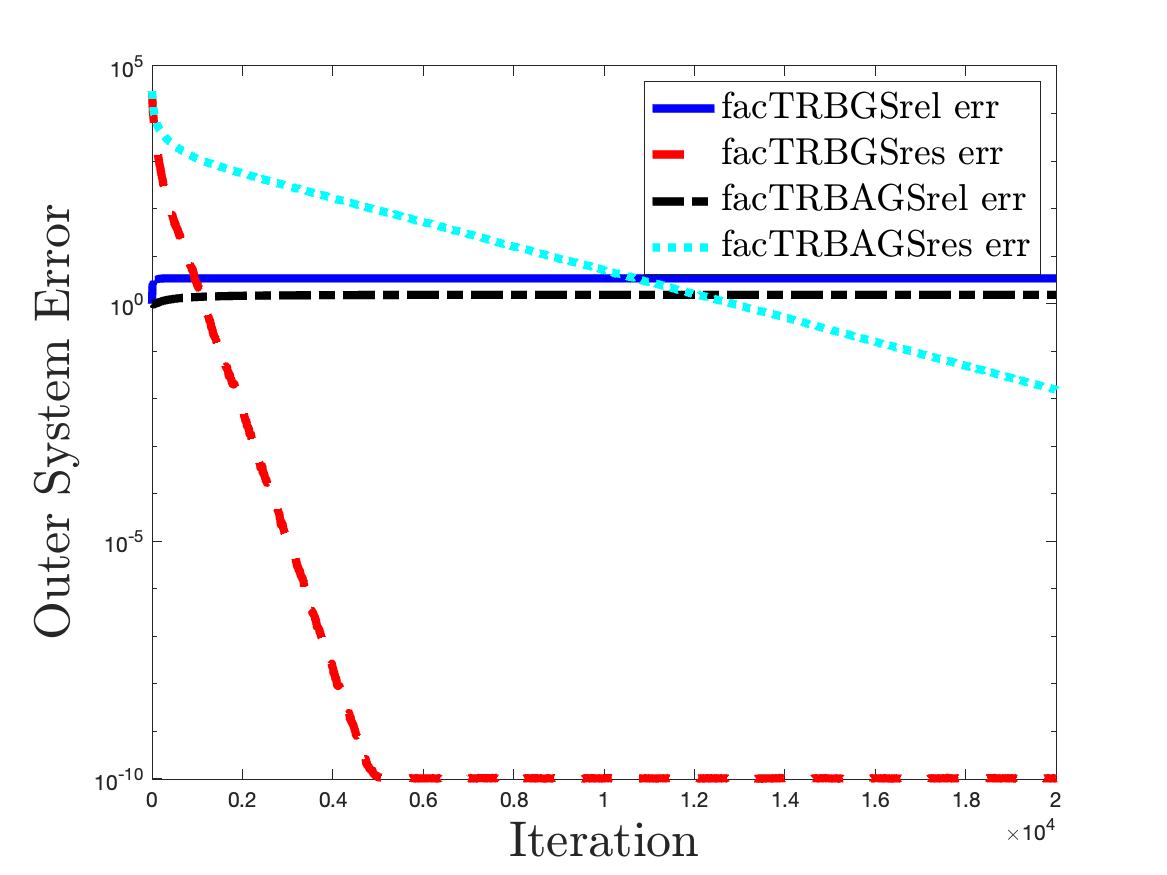}\hspace{.25cm}
    \includegraphics[width=0.475\textwidth]{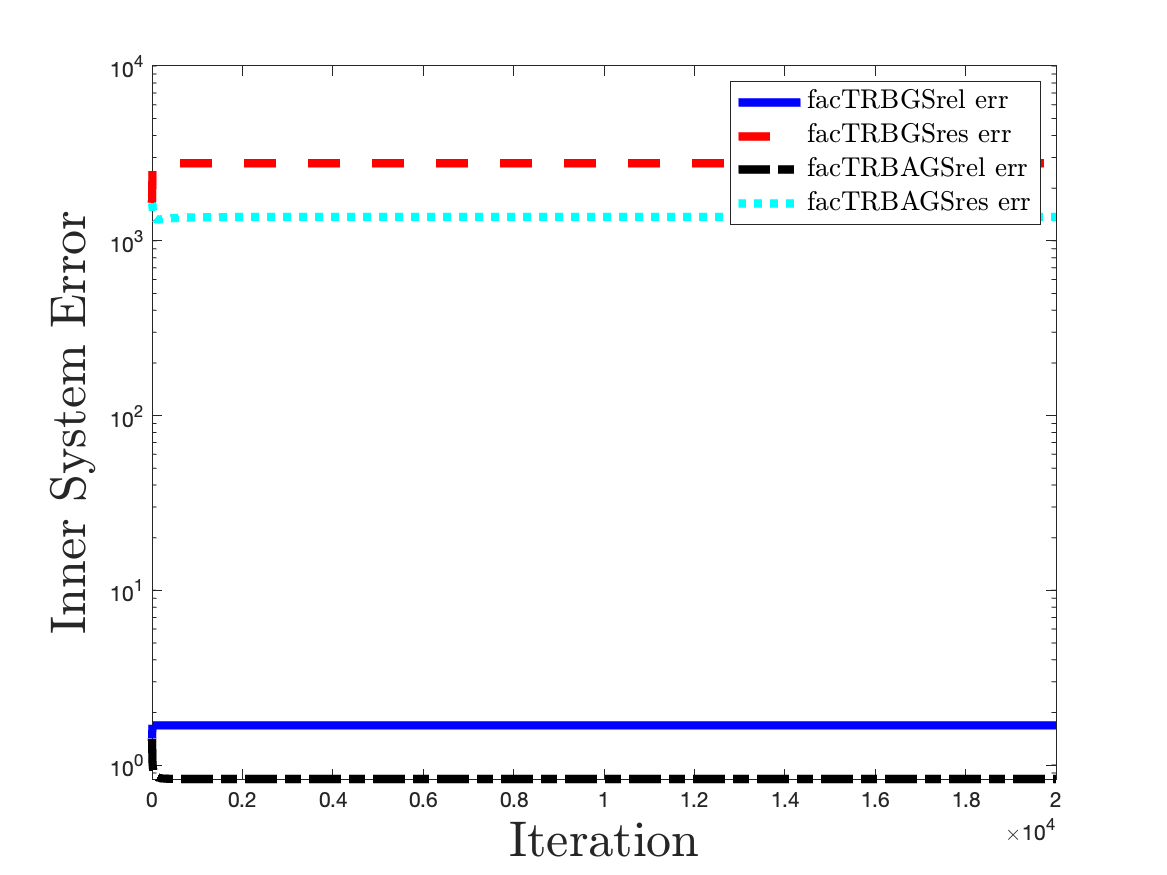}
    
    \caption{Relative error (rel err) and residual error (res err) versus iteration $t$ for the interlaced outer system $\tU \tZ = \tB$ and inner system $\tV \tX = \tZ$. Here, FacTRBGS and FacTRBAGS are applied to an inconsistent tensor system where $\tA$ is under-determined, $\tU$ is under-determined and $\tV$ is under-determined. The block size is kept constant at $|\mu| = 5$.}
    \label{fig:factorized_Incons_U_under_V_under_A_under}
\end{figure}\vspace{1cm}

\end{document}